\documentclass[a4paper,oneside, reqno]{amsart}
\usepackage{mathrsfs}
\usepackage{amsfonts}
\usepackage{amssymb}
\usepackage{amsxtra}
\usepackage{dsfont}
\usepackage{bbm}
\usepackage{bm}
\usepackage{comment}
\usepackage{enumitem}

\usepackage{orcidlink}

\usepackage[english,polish]{babel}

\usepackage{color}

\theoremstyle{plain}
\newtheorem{theorem}{Theorem}[section]
\newtheorem{proposition}{Proposition}[section]

\newtheorem{lemma}[proposition]{Lemma}

\theoremstyle{definition}

\numberwithin{equation}{section}

\newcounter{thm}

\newcommand{\RR}{\mathbb{R}}
\newcommand{\ZZ}{\mathbb{Z}}
\newcommand{\TT}{\mathbb{T}}
\newcommand{\CC}{\mathbb{C}}
\newcommand{\NN}{\mathbb{N}}
\newcommand{\QQ}{\mathbb{Q}}

\newcommand{\ex}{\mathfrak{e}}

\newcommand{\R}{\mathbb{R}}
\newcommand{\Z}{\mathbb{Z}}

\newcommand{\C}{\mathbb{C}}
\newcommand{\N}{\mathbb{N}}

\newcommand{\ind}[1]{{\mathds{1}_{{#1}}}}
\newcommand{\dist}{\operatorname{dist}}
\newcommand{\lcm}{\operatorname{lcm}}

\newcommand{\abs}[1]{{\lvert {#1} \rvert}}

\newcommand{\dpi}{2\pi i}

\DeclareMathOperator{\supp}{supp}

\DeclareMathOperator{\loc}{loc}

\title[Discrete analogues in harmonic analysis: $TT^*$ methods]
{Discrete analogues in harmonic analysis: $TT^*$ methods}

\author[B. Langowski]{Bartosz Langowski \orcidlink{0000-0002-1516-1689}}
\address{Bartosz Langowski, \newline
Department of Mathematics and Physical Sciences, 
Franciscan University of Steubenville, 
1235 University Blvd., Steubenville, OH 43952, USA
}
\email{blangowski@franciscan.edu}

\author[Mariusz Mirek]{Mariusz Mirek \orcidlink{0000-0001-7641-9893} }
\address{Mariusz Mirek,     \newline
Department of Mathematics,
Rutgers University,
Piscataway, NJ 08854-8019, USA 
and
Instytut Matematyczny,
Uniwersytet Wroc{\l}awski,
Plac Grunwaldzki 2,
50-384 Wroc{\l}aw
Poland}
\email{mariusz.mirek@rutgers.edu}

\author[T.Z. Szarek]{Tomasz Z. Szarek \orcidlink{0000-0003-0821-5607}}
\address{Tomasz Z. Szarek,     \newline
Department of Mathematics, University of Georgia,
Athens, GA 30602, USA
and
Instytut Matematyczny,
Uniwersytet Wroc{\l}awski,
Plac Grunwaldzki 2,
50-384 Wroc{\l}aw,
Poland}
\email{tzs10705@uga.edu}

\begin{document}
\selectlanguage{english}

\begin{abstract}
In this note we present how the almost-orthogonality methods based on
$TT^*$ arguments can be employed to study boundedness of discrete
operators of Radon type. Almost-orthogonality methods have particular
significance when the classical
Fourier methods are not available. However here, to avoid
technicalities and present the key ideas behind the discrete
almost-orthogonality methods, we  give a new proof of the
$\ell^2(\mathbb{Z}^d)$-boundedness of Bourgain's maximal inequality
for Radon polynomial averages.
\end{abstract}

\thanks{Bartosz Langowski was supported by the AMS-Simons Research Enhancement Grant for Primarily Undergraduate Institution (PUI) Faculty. Mariusz Mirek was supported by NSF CAREER grant
DMS-2236493. Tomasz Z.\ Szarek was supported by
the Simons Foundation grant SFI-MPS-TSM-00013714 and by
the National Science Centre, Poland, grant Sonata Bis 2022/46/E/ST1/00036.}

\maketitle

\section{Introduction}
\label{sec:intro}

The purpose of this paper is to demonstrate how almost-orthogonality methods can be employed in the field of discrete analogues in harmonic analysis. However, we will not prove any new results. Instead, we illustrate how almost-orthogonality methods can be used to provide new proofs of some well-known results for discrete Radon operators, emphasizing the key points of this method and its wide applicability.

\subsection{A brief history}\label{subsec:main}

Almost-orthogonality methods have been developed  for decades. The key mechanism behind that methods is
a simple \textit{squaring identity} $|z|^2=z\overline{z}$ for complex
numbers $z\in\CC$. Surprising applications of the squaring identity arise in estimates for oscillatory integrals \cite{bigs} and exponential sums \cite{IK}. To see
this define $\ex(z):=e^{2\pi i z}$ for $z\in\CC$ and  consider the \textit{Gauss sum} given by
\begin{align*}
  G(a/q)=\frac{1}{q}\sum_{n=1}^{q}\ex(an^2/q).
\end{align*}
If $(a, q)=1$, then, using the squaring identity, one can show that $G(a/q)$ has a lot of cancellations, namely  $|G(a/q)|\le 2q^{-1/2}$. Indeed, squaring the expression $qG(a/q)$, note that
\begin{align*}
|qG(a/q)|^2=qG(a/q)q\overline{G(a/q)}=\sum_{m=1}^q\sum_{n=1}^q\ex(a(m^2-n^2)/q)
=
\sum_{d =1}^q\bigg(\sum_{n =1}^q\ex({2adn}/{q}) \bigg)\ex(ad^2/q).
\end{align*}
Analyzing the inner sum, we see that it equals $q$ if $2d \equiv 0 \ (\mathrm{mod}\ q)$ (which happens very rarely) and $0$ otherwise, proving that 
$|qG(a/q)|^2 = q (1 + \ind{2|q} (-1)^{q/2})$
and consequently $|qG(a/q)|^2 \le 2q$ as desired. Interestingly, this upper bound is sharp.

This simple idea of squaring was essential in Weyl's differencing method, which yielded Weyl's equidistribution theorem \cite{Weyl} for polynomials and secured Weyl's inequality as a key tool in estimates of exponential sums (see Lemma \ref{lem:A3}). Later, Weyl's differencing method was refined by van der Corput, see \cite[Chapter 8]{IK}, and today it is widely applicable, spanning number theory, additive combinatorics, ergodic theory, harmonic analysis, and even beyond.
One of the most elegant and general formulations of the Weyl--van der Corput differencing method in Hilbert spaces can be subsumed in the following proposition.
\begin{proposition}[Weyl--van der Corput differencing method in Hilbert spaces, see \cite{EW}]
\label{prop:1}
Let $(u_n)_{n \in \mathbb Z_+}$ be a bounded sequence in a Hilbert space
$(\mathbb H, \|\cdot\|)$. Define a sequence $(s_n)_{n \in \mathbb Z_+}$ of real numbers by
\[
s_h = \limsup_{N \rightarrow \infty} \Big| \frac{1}{N}\sum_{n=1}^N\langle u_{n+h}, u_n \rangle \Big|.
\]
If $\lim_{H \rightarrow \infty} \frac{1}{H}\sum_{h=1}^Hs_h = 0$,
then $\lim_{N \rightarrow \infty} \big\|\frac{1}{N}\sum_{n=1}^N u_n \big\| = 0$. 
\end{proposition}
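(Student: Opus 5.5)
The plan is the standard van der Corput argument: replace the average of $u_n$ by a double average over shifts $u_{n+h}$, then apply Cauchy--Schwarz (the squaring identity) in the Hilbert space. Fix $H\in\mathbb Z_+$ and let $M:=\sup_n\|u_n\|<\infty$. First I would observe that for every $N$,
\[
\frac1N\sum_{n=1}^N u_n=\frac1N\sum_{n=1}^N\frac1H\sum_{h=1}^{H}u_{n+h}+O\Big(\frac{HM}{N}\Big),
\]
since the two sums differ only by boundary terms: at most $2H$ vectors, each of norm at most $M$, with total weight $O(1/N)$. Hence it suffices to bound $\limsup_N$ of the norm of the double average.

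Next I would apply the triangle inequality and Cauchy--Schwarz in $n$:
\[
\Big\|\frac1N\sum_{n=1}^N\frac1H\sum_{h=1}^H u_{n+h}\Big\|^2\le\frac1N\sum_{n=1}^N\Big\|\frac1H\sum_{h=1}^H u_{n+h}\Big\|^2
=\frac1{H^2}\sum_{h,h'=1}^H\frac1N\sum_{n=1}^N\langle u_{n+h},u_{n+h'}\rangle .
\]
For fixed $h,h'$ with $h\ge h'$, the substitution $m=n+h'$ changes the inner average $\frac1N\sum_n\langle u_{n+h},u_{n+h'}\rangle$ into $\frac1N\sum_m\langle u_{m+(h-h')},u_m\rangle$, up to an error $O(HM^2/N)$. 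The case $h<h'$ gives the complex conjugate. Taking $\limsup_N$ therefore gives
\[
\limsup_{N\to\infty}\Big\|\frac1N\sum_{n=1}^N u_n\Big\|^2\le\frac1{H^2}\sum_{h,h'=1}^H s_{|h-h'|},
\]
where we set $s_0:=\limsup_N\frac1N\sum_n\|u_n\|^2\le M^2$.

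Finally, counting pairs with $|h-h'|=k$ gives
\[
\frac1{H^2}\sum_{h,h'}s_{|h-h'|}\le\frac{M^2}{H}+\frac{2}{H}\sum_{k=1}^{H-1}\Big(1-\frac kH\Big)s_k\le\frac{M^2}{H}+\frac2H\sum_{k=1}^{H}s_k .
\]
By hypothesis this tends to $0$ as $H\to\infty$, and since the left-hand side does not depend on $H$, it must be $0$.

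The main obstacle is purely bookkeeping. The $O(H/N)$ boundary errors must be controlled uniformly for fixed $H$ before taking $\limsup_N$. We also need to justify interchanging $\limsup$ with the finite sum over $(h,h')$, which is fine because the $\limsup$ of a finite sum is at most the sum of the $\limsup$s of the absolute values. The only other point to check is that the diagonal term $s_0$ contributes just $O(1/H)$.
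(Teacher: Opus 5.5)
Your argument is correct: averaging over shifts, then Cauchy--Schwarz in $n$, then reindexing the correlations and counting pairs with $|h-h'|=k$ gives $\limsup_N\|\frac1N\sum_{n=1}^N u_n\|^2\le \frac{M^2}{H}+\frac{2}{H}\sum_{k=1}^{H}s_k$ for every $H$, with the boundary errors correctly of size $O(HM/N)$ and $O(HM^2/N)$. The paper gives no proof of its own and simply defers to \cite{EW}; this is the standard van der Corput argument one finds there.
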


Proposition \ref{prop:1} can be thought of as an almost-orthogonality,
or a kind of asymptotic orthogonality, criterion in Hilbert spaces. It
has important applications in modern ergodic theory. We refer to
\cite{Fra}, where precise references and additional information on
this topic can be found. We now use Proposition \ref{prop:1} to prove
Furstenberg's theorem \cite{Fur2}, asserting that ergodic averages
with polynomial iterates converge in norm. More precisely, let
$P\in\mathbb Z[\rm n]$ be a polynomial with integer
coefficients. Assuming that $(X, \mathcal B(X), \mu)$ is a probability
space equipped with a measure-preserving and invertible transformation $T \colon X\to X$,
which is totally ergodic (i.e. $T^n$ is ergodic for every
$n\in\mathbb Z_+$) one can prove that
\begin{align}
\label{eq:10}
\lim_{N\to\infty}\Big\|\frac{1}{N}\sum_{n=1}^Nf\circ T^{P(n)}-\int_Xfd\mu\Big\|_{L^2(X)}=0.
\end{align}
By linearity we can assume that  $\int_X fd\mu = 0$. To prove \eqref{eq:10} we proceed by induction on the degree of
$P$. If $P$ is linear, then the result follows from von Neumann's mean
ergodic theorem \cite{vN} and the total ergodicity of $T$. Now we can assume that
${\rm{deg}}(P) \geq 2$, and by Proposition \ref{prop:1} for
$\mathbb H=L^2(X)$ it suffices to prove that
$\lim_{N \rightarrow \infty} \frac{1}{N}\sum_{n=1}^N u_n = 0$ in $L^2(X)$
with $u_n = f \circ T^{P(n)}$.  Fix $h \in \mathbb Z_+$ and note that
$n\mapsto P(n+h) - P(n)$ is a polynomial of degree
${\rm{deg}}(P)-1$. By the induction hypothesis, we have
$\lim_{N \rightarrow \infty} \frac{1}{N}\sum_{n=1}^N f \circ T^{P(n+h) - P(n)} = 0$
in $L^2(X)$ for every $h \in \mathbb Z_+$. By the Cauchy--Schwarz
inequality
\[
\lim_{N\rightarrow \infty} \frac{1}{N}\sum_{n=1}^N \langle u_{n+h}, u_n \rangle =
\lim_{N\rightarrow \infty} \Big\langle \frac{1}{N}\sum_{n=1}^N f \circ T^{P(n+h) - P(n)}, f \Big\rangle = 0.
\]
This establishes the hypothesis of Proposition \ref{prop:1}, which
consequently implies \eqref{eq:10}. Furstenberg, used \eqref{eq:10} to
establish the polynomial Poincar{\'e} recurrence theorem \cite{Fur2}
and obtained an ergodic proof of S{\'a}rk{\"o}zy theorem \cite{Sark1,
Sark2}, which asserts that for every polynomial $P\in\mathbb Z[\rm n]$ with
$P(0)=0$ and any $A\subseteq\ZZ$ with positive
upper density there are integers $x,n\in\ZZ$, with $n \neq 0$, such that
$x, x+P(n)\in A$.

\medskip

The squaring identity $|z|^2=z\overline{z}$ for $z\in\CC$ has its Hilbert-space counterpart, the so-called $TT^*$ identity.  Namely, if $(\mathbb H, \|\cdot\|)$ is
a Hilbert space and $T \colon \mathbb H\to \mathbb H$ is a bounded linear operator on $\mathbb H$, then
\begin{align}
\label{eq:9}
\|TT^*\|_{\mathbb H\to \mathbb H}=\|T\|_{\mathbb H\to \mathbb H}^2,
\end{align}
where $\|\cdot\|_{\mathbb H\to \mathbb H}$ denotes the operator
norm. Identity \eqref{eq:9} has had a profound impact on many
developments in mathematics over the years. It is impossible to list
all references where the $TT^*$ methods were critical; however, their
role in harmonic analysis and number theory --- which is important
from the point of view of this paper --- remains indispensable.  In
particular, the $TT^*$ methods are decisive in estimates of
oscillatory integrals and exponential sums, and were key to the
Tomas--Stein restriction phenomenon, see \cite[Chapter IX]{bigs},
which led to the restriction conjecture, a major open problem in
harmonic analysis. More on these topics can be found on Tao's blog
\cite{T}. The $TT^*$ methods have also been extensively
exploited in number theory for decades. Recently, a variant of the
$TT^*$ method was decisive in a remarkable paper by Guth and Maynard
\cite{GM}, in which a breakthrough for a large-value problem for
Dirichlet polynomials --- a problem related to the zeros of the
Riemann zeta function --- was achieved. We also refer to Guth's survey
\cite{Guth}, in which $TT^*$ methods among the so-called large-value
estimates are discussed in harmonic analysis, number theory,
statistics, and computer science. Here, in addition to the problems
mentioned above, we emphasize two important results based on $TT^*$
methods that have become central in classical harmonic analysis and
will also play a significant role in this paper.

\medskip

The first one is a striking application of the $TT^*$ method from
\eqref{eq:9} by Kolmogorov and Seliverstov \cite{KS} and Plessner
\cite{P} in their work on pointwise convergence of Fourier series.  We now
illustrate their method in the context of Stein's observation
\cite{Steinmax} on the Hardy--Littlewood maximal inequality \cite{HLmax} on
$L^2(\RR)$. For any locally integrable function $f\in L^1_{\loc}(\RR)$, we define its average
\begin{align}
\label{eq:17}
M_tf(x):=\frac{1}{t}\int_0^tf(x-y)\,dy, \qquad x\in\RR, \quad  t>0,
\end{align}
and the corresponding maximal function $M_{*}f(x):=\sup_{t>0}|M_tf(x)|$. Taking a measurable function $N \colon \RR\to\RR_+$, we note that for any  measurable $f\ge0$, we have the following pointwise bound
\begin{align}
\label{eq:11}
M_{N(x)}M_{N(x)}^*f(x)
\le 
M_{N(x)}f(x)+M_{N(x)}^*f(x), \qquad x\in\RR.
\end{align}
Thus by \eqref{eq:11}, we obtain $\|M_{N(\cdot)}\|_{L^2(\RR) \to L^2(\RR)}^2\le 2\|M_{N(\cdot)}\|_{L^2(\RR) \to L^2(\RR)}$, which immediately yields the Hardy--Littlewood maximal inequality
\begin{align}
\label{eq:12}
\|M_{*}f\|_{L^2(\RR)}\le 2\|f\|_{L^2(\RR)},\qquad f\in L^2(\RR).
\end{align}
This approach should not be considered competitive with the
Hardy--Littlewood maximal inequality, since by \cite{HLmax} we know
that \eqref{eq:12} extends to \(L^p(\mathbb{R})\) spaces with
\(p \in (1, \infty]\), and it satisfies the weak type $(1,1)$
inequality for integrable functions \(f \in L^1(\mathbb{R})\). So we
obtain the best that can be expected for these kinds of questions.  We
instead want to emphasize that the $TT^*$ methods in inequalities like
\eqref{eq:11} exhibit a certain bootstrap phenomenon that may serve as
an alternative method for approaching maximal inequalities in
general. This may be especially important in situations where endpoint
estimates at obvious endpoints may not be available. These ideas were
further studied in the context of ergodic operators by Stein
\cite{Steinmax} and later by Weiss \cite{W}. More on the almost-orthogonality methods in the  ergodic
context  can be found in Nevo's survey \cite{Nevo}. Also, the idea of
employing $TT^*$ methods like in \eqref{eq:11} turned out to be
fruitful for polynomial Carleson-type operators by Stein and Wainger
\cite{SW}, whenever polynomials have no linear term. Far-reaching
extensions of these methods emerged in the understanding of maximally
modulated polynomial Carleson--Radon type operators by Pierce and Yung
\cite{PY}, by Guo, Pierce, Roos and Yung \cite{GPRY}, and recently by
Anderson, Maldague, Pierce, and Yung \cite{AMPY}. The discrete  maximally modulated polynomial Carleson type operators were studied
by Krause and Roos in \cite{KR}.

\medskip

The second application of the  $TT^*$ methods is a celebrated Cotlar--Knapp--Stein almost-orthogonality lemma, which provides a criterion for boundedness of sums
of linear operators in Hilbert spaces.

\begin{lemma}[Cotlar--Knapp--Stein almost-orthogonality lemma]
\label{lem:1}
Let $(T_j)_{j \in \ZZ}$ be a family of linear and bounded operators mapping a Hilbert space $(\mathbb H, \|\cdot\|)$ to itself. Assume that $\gamma \colon \ZZ \rightarrow \RR_+$ is a function such that 
\begin{equation}\label{eq:cotlarsteinhyp}
\|T_j^*T_k\|_{\mathbb H \to \mathbb H} + \|T_j T_k^*\|_{\mathbb H \to \mathbb H} \leq \gamma(j-k), \qquad j, k\in\ZZ,
\end{equation}
and suppose that     $A := \sum_{j \in \ZZ} \sqrt{\gamma(j)} < \infty$. Then the following three conclusions hold.
\begin{enumerate}
    \item For any finite subset $S \subseteq \ZZ$, we have
    \[ \Big\|\sum_{j \in S} T_j \Big\|_{\mathbb H \to \mathbb H} \leq A.\]
    \item For all $x \in \mathbb H$, we have
    \[ \sum_{j \in \ZZ} \|T_j x\|^2 \leq A^2 \|x\|^2.\]
    \item For all $x \in \mathbb H$, the sequence $\sum_{|j|\leq N} T_j(x)$ converges to some $T(x)$ as $N \rightarrow \infty$ in the norm topology of $\mathbb H$. The linear operator $T \colon \mathbb H\to \mathbb H$ defined in this way is bounded  with $\|T\|_{\mathbb H \to \mathbb H} \leq A$. 
\end{enumerate}
\end{lemma}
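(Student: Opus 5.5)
For (1) we use the standard $TT^*$ power trick. Set $T=\sum_{j\in S}T_j$. By \eqref{eq:9}, $(TT^*)^m$ is self-adjoint, so $\|T\|^{2m}=\|(TT^*)^m\|$ for every $m\in\ZZ_+$. Expanding,
\[
(TT^*)^m=\sum_{j_1,k_1,\dots,j_m,k_m\in S}T_{j_1}T_{k_1}^*T_{j_2}T_{k_2}^*\cdots T_{j_m}T_{k_m}^*.
\]
Each summand can be bounded in two ways. Grouping as $(T_{j_1}T_{k_1}^*)(T_{j_2}T_{k_2}^*)\cdots$ and using \eqref{eq:cotlarsteinhyp} gives the bound $\prod_i\gamma(j_i-k_i)$. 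Grouping as $T_{j_1}(T_{k_1}^*T_{j_2})\cdots(T_{k_{m-1}}^*T_{j_m})T_{k_m}^*$ gives $\|T_{j_1}\|\,\|T_{k_m}\|\prod_{i<m}\gamma(k_i-j_{i+1})$. Note that $\|T_j\|^2=\|T_jT_j^*\|\le\gamma(0)\le A^2$, so the end factors are at most $A^2$. Taking the geometric mean of the two bounds, the summand is at most
\[
A\,\prod_{i}\sqrt{\gamma(j_i-k_i)}\prod_{i<m}\sqrt{\gamma(k_i-j_{i+1})}.
\]
We then sum successively over $k_m,j_m,k_{m-1},\dots,k_1$, each sum contributing at most $A$, and finally over $j_1\in S$, which contributes $|S|$. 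This yields $\|T\|^{2m}\le |S|\,A\cdot A^{2m-1}$. Taking $2m$-th roots and letting $m\to\infty$ gives $\|T\|\le A$. The main obstacle is this bookkeeping: interleaving the two groupings so that every difference appears once under a square root and the final free sum costs only the harmless factor $|S|$.

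For (2), fix a finite $S$ and take unimodular $\epsilon_j$. The rotated family $\epsilon_jT_j$ still satisfies \eqref{eq:cotlarsteinhyp}, since the norms are unchanged. So (1) gives $\|\sum_{j\in S}\epsilon_jT_j\|\le A$. Consider instead the family $T_j^*$: it satisfies the same hypothesis, with the two terms in \eqref{eq:cotlarsteinhyp} swapped and the index reflected, which is fine because $\gamma(j-k)$ bounds both terms for all $j,k$. Hence $\|\sum_{j\in S}\epsilon_jT_j^*\|\le A$ as well. Now average over random signs $\epsilon_j=\pm1$:
\[
\mathbb E\Big\|\sum_{j\in S}\epsilon_jT_jx\Big\|^2=\sum_{j\in S}\|T_jx\|^2\le A^2\|x\|^2.
\]
Letting $S\uparrow\ZZ$ gives (2). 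Duality is not even needed here.

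For (3), it suffices to show that the partial sums are Cauchy; the bound $\|T\|\le A$ then follows from (1) by passing to the limit. First consider $x=T_k^*y$. Then
\[
\sum_{N<|j|\le M}T_jT_k^*y
\]
has norm at most $\sum_{|j|>N}\gamma(j-k)\|y\|\to0$, because $\gamma$ is bounded (as $\gamma(j)\le A^2$) and $\sum_j\gamma(j)\le(\sup\sqrt\gamma)\,A<\infty$. So convergence holds on the linear span of the ranges of the $T_k^*$. On the orthogonal complement, i.e.\ on $\bigcap_k\ker T_k$, every partial sum vanishes, so convergence is trivial there. The span is dense in the closure of the ranges, and the partial sums are uniformly bounded by $A$ by (1). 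An $\varepsilon/3$ argument therefore gives convergence on all of $\mathbb H$.
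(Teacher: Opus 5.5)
Your proof is correct and follows the approach the paper indicates. The paper does not prove this lemma itself: it cites Cotlar and Knapp--Stein and points to the power method \eqref{eq:13}, which is exactly your argument for (1), namely the geometric mean of the two groupings summed along the chain $j_1,k_1,\dots,j_m,k_m$, giving $\|T\|^{2m}\le |S|A^{2m}$. Your arguments for (2) and (3) are the standard random-sign argument and the density argument on $\bigcup_k T_k^*(\mathbb H)$ with its complement $\bigcap_k\ker T_k$, and both are sound as written; the detour through the family $T_j^*$ in (2) is unnecessary.
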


This lemma was originally proved by Cotlar \cite{Cot} for  commuting self-adjoint operators, and then independently by Knapp and Stein \cite{KStein} in full generality. The proof of the Cotlar--Knapp--Stein lemma is based on identity \eqref{eq:9} or more precisely the power method in which $\|T\|_{\mathbb H \to \mathbb H}$ is expressed in terms of the operator norm of a large power of $TT^*$ or $T^*T$, which by iterating \eqref{eq:9} for any  $r\in\ZZ_+$, reads as follows
\begin{align}
\label{eq:13}
\|T\|_{\mathbb H \to \mathbb H}^{2r}=\|(TT^*)^r\|_{\mathbb H \to \mathbb H}.
\end{align}

The Cotlar--Knapp--Stein lemma is a handy tool for controlling
operators such as singular integrals or pseudo-differential operators
$T$, which can be expressed as $T =\sum_{i\in\ZZ} T_i$ of
scale-indexed pieces $T_i$ that capture the behavior at scale
$2^{-i}$. Consequently, the $L^2$-boundedness of $T$ reduces to
establishing uniform bounds of the individual $T_i$ and obtaining
sufficient decay in mixed-terms $T_i^* T_j$ and $T_i T_j^*$ when the
indices $i$ and $j$ are far apart. This can be illustrated in the
context of the Hilbert transform. For $j \in \ZZ$ and
$f \in L^2(\RR)$, we define the Hilbert transform restricted to the annulus of size $2^j$ by setting
\begin{align*}
H_j f(x) := \frac{1}{\pi} \int\limits_{2^j < |y| \leq 2^{j+1}} f(x-y) \frac{dy}{y}.
\end{align*}
The boundedness on $L^2(\RR)$ follows form the fact that each $H_jf$
is a convolution of $f$ with the kernel
$K_j(y):= \pi^{-1} y^{-1}\ind{2^j < |y| \leq 2^{j+1}}$. Moreover, it is not
difficult to see that there is a constant $C\in\RR_+$ such that
$\|K_i*K_j\|_{L^1(\RR)}\le C2^{-|i-j|}$ for any $i, j\in\ZZ$. This in
turn verifies condition \eqref{eq:cotlarsteinhyp} with $T_j=H_j$ since
$H_j^*=-H_j$ for all $j\in\ZZ$. Hence, Lemma \ref{lem:1} guarantees
that the full Hilbert transform $Hf:=\sum_{j\in\ZZ}H_jf$ is bounded on
$L^2(\RR)$. Here, as in many other situations where the
Littlewood--Paley decomposition is available, one can split the
operator $T$ into pieces $T_i$ dictated by the underlying
Littlewood--Paley decomposition, and verifying condition
\eqref{eq:cotlarsteinhyp} becomes a relatively straightforward matter.
The full strength of the Cotlar--Knapp--Stein lemma lies in settings
where Fourier methods are unavailable, for instance, on curved
manifolds or non-abelian Lie groups, or when the corresponding kernels
are rough and exhibit poor Fourier behavior.  The best known examples
falling in the latter framework are singular and maximal Radon
transforms, whose $L^p$-boundedness requires almost-orthogonality
arguments in the spirit of the Cotlar--Knapp--Stein lemma. These
operators were extensively studied by Christ, Nagel, Stein, and
Wainger in their groundbreaking paper \cite{CNSW}. Finally, we
emphasize that the Cotlar--Knapp--Stein lemma played a decisive role
in Fefferman's proof \cite{F1} of the Carleson theorem \cite{Car} on
the pointwise convergence of Fourier series for square-integrable
functions. Fefferman's ideas from \cite{F1} were further extended by
Lie \cite{Lie} to study maximally modulated polynomial Carleson
operators for all polynomial phases.

\medskip

Now, keeping in mind this brief outline of the almost-orthogonality methods  in
harmonic analysis, we can turn to the discrete setting of Radon maximal operators, which is a key aspect of the paper.

\subsection{Statement of the main result and proof strategy} Let
$\Omega$ be a convex, open subset of $\RR^d$ for which there exists a
constant $c_\Omega>0$ such that
\begin{align}
\label{eq:14}
B(0, c_\Omega)\subseteq \Omega\subseteq B(0,1),
\end{align}
and let $\Omega_k$ be its dyadic dilation defined by
\begin{equation*}
\Omega_k := 2^k\Omega = \{x \in \RR^d : x/2^k \in \Omega\}, \qquad k\in\ZZ.
\end{equation*}
Let $\Gamma\subseteq\NN^d\setminus\{0\}$ be a finite and nonempty set of multi-indices, and define a \textit{canonical polynomial mapping}  induced by the set
$\Gamma$ by setting
\begin{align*}
\RR^d\ni x\mapsto (x)^{\Gamma}:=(x^{\gamma}: \gamma\in\Gamma)\in\RR^{\Gamma}.
\end{align*}
Hence $(x)^{\Gamma}$,  is simply a vector in the $|\Gamma|$-dimensional vector space $\RR^{\Gamma}$ identified with $\RR^{|\Gamma|}$ whose entries are the monomials $x^{\gamma}:=x_1^{\gamma_1}\cdots x_d^{\gamma_d}$ with exponents $\gamma=(\gamma_1, \ldots, \gamma_d)$ being  the elements from the set $\Gamma$.

\medskip

Now having these definitions, we define  for any  function $f\colon\ZZ^{\Gamma}\rightarrow \CC$
 the \textit{averaging Radon  operator} along the canonical polynomial $\ZZ^d\ni x\mapsto (x)^{\Gamma}\in\ZZ^{\Gamma}$ by
\begin{align*}
A_k^\Gamma f(x) := \frac{1}{|\Omega_k \cap \ZZ^d|} 
\sum_{n\in\Omega_k \cap \ZZ^d}f\big(x-(n)^\Gamma\big), \qquad x \in \ZZ^{\Gamma}, \quad k\in\NN.
\end{align*}
Our main result of this paper reads as follows.
\begin{theorem}\label{thm:1.1}
Let $d\in\ZZ_+$ and a finite set
$\Gamma \subseteq \NN^{d} \setminus \{0\}$ be given. Assume that
$\Omega \subseteq \RR^d$ is a convex and  open set satisfying \eqref{eq:14}. Then, there is a constant $C>0$ such that
\begin{equation}\label{eq:1.1}
\big\|\sup_{k\in\ZZ_+}|A_k^\Gamma f|\big\|_{\ell^2(\ZZ^{\Gamma})}
\le 
C \|f\|_{\ell^2(\ZZ^{\Gamma})},\qquad f\in\ell^2(\ZZ^\Gamma).
\end{equation}
\end{theorem}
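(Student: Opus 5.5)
The plan is to prove \eqref{eq:1.1} by a $TT^*$ linearization in the spirit of \eqref{eq:11}, combined with the circle method to describe the kernels of the composed operators. The supremum is linearized by a measurable choice $k(x)\in\ZZ_+$; it then suffices to bound $T f(x):=A^\Gamma_{k(x)}f(x)$ on $\ell^2(\ZZ^\Gamma)$ uniformly in the choice of $k(\cdot)$. By \eqref{eq:9} it suffices to bound $TT^*$. Its kernel is
\[
TT^*g(x)=\sum_{y}g(y)\,\big(K_{k(x)}*\widetilde K_{k(y)}\big)(x-y),
\]
where $K_k$ is the normalized counting measure of $\{(n)^\Gamma:n\in\Omega_k\cap\ZZ^d\}$ and $\widetilde K_k(z)=K_k(-z)$. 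Split $TT^*$ by comparing scales: the part with $|k(x)-k(y)|\le s$ and the parts with $k(y)=k(x)+j$, $j>s$ (and symmetrically). The task is to show that each convolution $K_k*\widetilde K_{k+j}$ is dominated by an operator whose maximal function is controlled with a gain.

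For the key step, decompose each multiplier $m_k=\widehat{K_k}$, following the major/minor arc philosophy. By Weyl's inequality (Lemma \ref{lem:A3}), on minor arcs at scale $2^k$ we get $|m_k(\xi)|\lesssim 2^{-\delta k}$. On major arcs near $a/q$ with $q\le 2^{\varepsilon k}$, $m_k(\xi)\approx G(a/q)\,\Phi_k(\xi-a/q)$, where $G$ is the complete Gauss-type sum, with $|G(a/q)|\lesssim q^{-\delta}$ (again via squaring), and $\Phi_k$ is the continuous multiplier. Rather than using Fourier multiplier maximal theorems, I would then run the $TT^*$ bootstrap at each dyadic denominator level $2^s\le q<2^{s+1}$. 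The corresponding pieces $T_s$ satisfy $\|T_sT_s^*\|\lesssim 2^{-2\delta s}\cdot(\text{maximal bound})$, because $T_sT_s^*$ has a kernel that factorizes into an arithmetic part, periodic modulo $Q_s=\lcm\{q\le 2^{s}\}$, and a continuous part. The continuous part is controlled pointwise, as in \eqref{eq:11}, by Hardy--Littlewood-type averages on $\ZZ^\Gamma$ (bounded by the discrete analogue of \eqref{eq:12}). The arithmetic part is controlled by the Gauss sum decay. This is where \eqref{eq:13} with large $r$ helps: powers $(T_sT_s^*)^r$ let us absorb the logarithmic losses coming from the number of scales $k$ with $2^{\varepsilon k}\ge 2^s$.

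The minor arc contributions and the error terms decay like $2^{-\delta k}$. They are summed directly, via $\sup_k|E_k f|\le(\sum_k|E_kf|^2)^{1/2}$ and Plancherel. The small scales $k\lesssim s/\varepsilon$ in the $s$-th piece are handled the same way, since there are only $O(s)$ of them and the gain $2^{-\delta s}$ beats this.

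I expect the main obstacle to be the major arc piece at a fixed level $s$: proving a maximal inequality with norm $\lesssim s^{C}2^{-\delta s}$. This needs an almost-orthogonality argument that decouples the periodic arithmetic factor from the continuous averages without using Fourier multiplier maximal transference. I would first try Cotlar--Knapp--Stein (Lemma \ref{lem:1}) in the scale parameter, with $\gamma(j)\lesssim 2^{-c|j|}$ from the smoothness of $\Phi_k$ across separated scales. I would combine it with the linearized $TT^*$ bound for the nearby scales, and with the Gauss sum estimate providing the factor $2^{-\delta s}$ that makes the sum over $s$ converge.
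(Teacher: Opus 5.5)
Your minor-arc argument and your factorization into a $Q_s$-periodic arithmetic factor times a continuous factor match the paper (Lemmas~\ref{lem:minor arcs}, \ref{lem:3.1}, \ref{lem:3.2}). However, the proposal has a genuine gap exactly where the theorem is hard: the supremum over the scales at which that factorization is not available. The factorization needs the continuous factor at scale $2^k$ to be essentially constant on cosets of $Q_s\ZZ^\Gamma$, i.e. $\log Q_s\lesssim k$ (the paper uses $Q_s\le 2^{\delta k}$). With $Q_s=\lcm\{q\le 2^s\}=e^{(1+o(1))2^s}$ this forces $k\gtrsim 2^s$. So between $k\simeq s/\varepsilon$, where the level-$s$ arcs first appear, and the factorization range there are exponentially many scales in $s$, not $O(s)$; in the paper this range is $s/\delta\le k\le\kappa_s=2^{2D(s+1)^2}$. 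The per-scale gain is only $\sup_{a/q\in R_s}|G(a/q)|\lesssim 2^{-\tau s}$. For $d=1$, $\Gamma=\{2\}$ it is $\simeq 2^{-s/2}$, and a square-function bound over $\simeq 2^s$ scales costs exactly $2^{s/2}$, so the sum over $s$ diverges. Cotlar--Knapp--Stein in $k$ does not help as you set it up. The multiplier of $T_{k,s}T_{k',s}^*$ equals $|G(a/q)|^2$ at every $a/q\in R_s$ for all $k,k'$, so the pieces themselves have no decay in $|k-k'|$. Moreover, Lemma~\ref{lem:1} controls sums, not suprema. The missing idea is the Rademacher--Menshov inequality \eqref{eq:16}. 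It converts the supremum over $\max\{D,s/\delta\}\le k\le\kappa_s$ into the randomized sums \eqref{eq:7.1} of differences $H_{k,s}=K_{k+1,s}-K_{k,s}$, at a loss of only $\log\kappa_s\simeq s^2$. The differences carry the cancellation $\widetilde{J_k}(0)=0$. After the approximation in Lemma~\ref{lem:2.1} and the splitting modulo $Q=qq'$ in Lemma~\ref{lem:2.3}, this gives $\|\mathcal H^1_{j,s}(\mathcal H^1_{k,s})^*\|\lesssim 2^{-4s/D}2^{-|k-j|/D}$, and only then does Cotlar--Knapp--Stein apply. Relatedly, large $r$ does not absorb a count of scales. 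The paper raises a \emph{fixed-scale convolution} operator to the power $r$, so that $\sum_{a/q\in R_s}|G(a/q)|^{2r}\lesssim 2^{-s}$ absorbs the number of fractions $|R_s|$ in an $\ell^1$ kernel bound. For your linearized $T_s$, the kernel of $(T_sT_s^*)^r$ has no convolution structure and cannot be computed as in Section~\ref{subsec:TT*}.

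The second gap is the continuous maximal bound. The bootstrap \eqref{eq:11} rests on positivity and the interval geometry of $\RR$. For $u\mapsto(u)^\Gamma$ the kernels concentrate near lower-dimensional sets and are not dominated by Hardy--Littlewood averages. For example, take $d=1$, $\Gamma=\{1,2\}$. The kernel of $A_kA_k^*$ is $\gtrsim 2^{-2k}$ at the points $(n_1-n_2,n_1^2-n_2^2)$ with $|n_1-n_2|\simeq 2^k$. But any anisotropic ball containing $0$ and such a point has $\gtrsim 2^{3k}$ lattice points, so the domination fails by a factor $2^k$. A loss growing in $k$ persists after the frequency cutoff $|2^k\circ\xi|\le 2^{\delta k}$. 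Splitting your $TT^*$ according to $j=k(y)-k(x)$ also gives no decay in $j$, since $K_k\ast\widetilde K_{k+j}$ is a probability measure at scale $2^{k+j}$. The paper proves Lemma~\ref{lem:3.3} by writing $W_{k,Q}=W_{k,0,Q}+\sum_{\omega}(W_{k,\omega+1,Q}-W_{k,\omega,Q})$. Only the smooth part $W_{k,0,Q}$ is dominated by $M_{\rho_\Gamma}$. The differences, whose kernels $S_{k,\omega}$ have mean zero, are handled by a square function, Cotlar--Knapp--Stein in $k$, and a high-order $TT^*$ bound that uses the van der Corput decay of Lemma~\ref{lem:A2} to gain $2^{-\omega/D}$.
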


As a corollary from Theorem \ref{thm:1.1} we obtain the following maximal theorem.

\begin{theorem}\label{thm:main}
Let $d, d_0\in\ZZ_+$ be given and  $\Omega \subseteq \RR^d$ be a convex and  open set satisfying \eqref{eq:14}. Let $P=(P_1,\ldots, P_0)\colon\ZZ^d\rightarrow \ZZ^{d_0}$ be a polynomial mapping where each $P_j\colon\ZZ^d\rightarrow \ZZ$ is a  polynomial with integer coefficients.
For any  function $f\colon\ZZ^{d_0}\rightarrow \CC$ we define the averaging Radon  operator along $P$ by
\begin{align*}
A_k^P f(x) := \frac{1}{|\Omega_k \cap \ZZ^d|} 
\sum_{n\in\Omega_k \cap \ZZ^d}f\big(x-P(n)\big), \qquad x \in \ZZ^{d_0}, \quad k\in\NN.
\end{align*}
  Then, there is a constant
$C>0$ such that
\begin{equation}
\label{eq:15}
\big\|\sup_{k\in\ZZ_+}|A_k^P f|\big\|_{\ell^2(\ZZ^{d_0})}
\le 
C \|f\|_{\ell^2(\ZZ^{d_0})}, \qquad f\in\ell^2(\ZZ^{d_0}).
\end{equation}
\end{theorem}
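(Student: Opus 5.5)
The plan is the standard lifting argument, which derives Theorem \ref{thm:main} from Theorem \ref{thm:1.1} applied to a canonical polynomial mapping. First, let $\Gamma\subseteq\NN^d\setminus\{0\}$ be the set of all multi-indices $\gamma\neq0$ with $|\gamma|\le\max_j\deg P_j$. Then each $P_j$ can be written as
\[
P_j(n)=P_j(0)+\sum_{\gamma\in\Gamma}c_{j,\gamma}n^\gamma .
\]
The coefficients $c_{j,\gamma}$ are rational but need not be integers. However, the map $L\colon\RR^\Gamma\to\RR^{d_0}$, $L(y)_j=\sum_\gamma c_{j,\gamma}y_\gamma$, is linear with $P(n)=P(0)+L((n)^\Gamma)$. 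The constant shift $P(0)$ is a translation, which commutes with the maximal function and preserves $\ell^2$ norms, so we may assume $P(0)=0$.

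The integrality issue is handled by partitioning $\ZZ^d$ into residue classes. Let $Q$ be a common denominator of all $c_{j,\gamma}$ and write $n=Qm+r$ with $r\in\{0,\dots,Q-1\}^d$. Expanding gives
\[
P(Qm+r)=P(r)+\tilde L_r((m)^\Gamma),
\]
where the $\tilde L_r$ are linear maps $\ZZ^\Gamma\to\ZZ^{d_0}$ with \emph{integer} coefficients. This holds because $P(Qm+r)-P(r)$ is an integer-valued polynomial in $m$ vanishing at $0$, every term $c_{j,\gamma}((Qm+r)^\gamma-r^\gamma)$ has coefficients divisible by $Q$ times the coefficient, and only multi-indices of total degree at most $\max_j\deg P_j$ occur. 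Consequently $A_k^P f$ is a convex combination, with weights bounded by a constant, over $r$ of averages of the form
\[
\frac{1}{|\Omega_k\cap\ZZ^d|}\sum_{m:\,Qm+r\in\Omega_k}f\big(x-P(r)-\tilde L_r((m)^\Gamma)\big).
\]
The summation set $\{m: Qm+r\in\Omega_k\}=Q^{-1}(\Omega_k-r)$ is a shifted dilate of a convex set. It is not exactly $\Omega_{k'}$, so I would either generalize Theorem \ref{thm:1.1} to convex sets $\Omega$ with \eqref{eq:14} replaced by $B(a,c)\subseteq\Omega\subseteq B(0,1)$, since its proof should be insensitive to this, or dominate the average by a nonnegative one. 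The domination step works for $f\ge0$: by convexity and \eqref{eq:14}, $Q^{-1}(\Omega_k-r)\subseteq B(0,2^{k+1}Q^{-1})$, and for large $k$ that ball is contained in $\Omega_{k+s}$ for a fixed $s$ depending on $c_\Omega$ and $Q$. The counts are comparable, so the average is bounded by $C A_{k+s}^\Gamma$ acting along the canonical map, transferred through $\tilde L_r$. Small $k$ are finitely many bounded operators.

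The transference step passes from $\ZZ^\Gamma$ to $\ZZ^{d_0}$ through the integer linear map $\tilde L=\tilde L_r$. For $f\colon\ZZ^{d_0}\to\CC$ and fixed $x\in\ZZ^{d_0}$, define $F_x(y)=f(x-\tilde L(y))$ on $\ZZ^\Gamma$. Then the lifted average at $0$ of $F_x$ equals the pushed average of $f$ at $x$, and more generally at $y$ it equals the average at $x-\tilde L(y)$. To get an $\ell^2$ bound, truncate: for $R$ large, sum the maximal inequality \eqref{eq:1.1}, restricted to finitely many $k\le K$, over $y$ in a box $[-R,R]^\Gamma$ applied to $F_x\ind{[-R',R']^\Gamma}$ with $R'=R+C2^{K\deg}$. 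Then sum over $x\in\ZZ^{d_0}$. Each point $x-\tilde L(y)$ is hit with multiplicity equal to the number of $(x,y)$ pairs, which is uniform in the target point. Dividing by $(2R+1)^{|\Gamma|}$ and letting $R\to\infty$ (so $R'/R\to1$) gives the bound with the same constant $C$. Monotone convergence in $K$ then finishes the argument.

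Summing over the $Q^d$ residues $r$ and absorbing the translations by $P(r)$ gives \eqref{eq:15}, with a constant depending on $Q$, $d$, $\Omega$ and $\Gamma$. The main obstacle is the bookkeeping in the residue-class step, namely ensuring the restricted summation regions can be compared with the $\Omega_k$. The positivity domination handles it cleanly, because $|A_k^P f|\le A_k^P|f|$ lets us work with $f\ge0$ throughout.
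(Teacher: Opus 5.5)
Your argument is correct and is essentially the lifting-lemma transference the paper invokes (via Stein's book, Chapter~11, Section~2.4) to deduce Theorem~\ref{thm:main} from Theorem~\ref{thm:1.1}. Two small remarks. First, the $P_j$ have integer coefficients by hypothesis, so $L$ is already integral, $Q=1$, and your residue-class/domination step is unnecessary; it only buys the extension to integer-valued polynomials. Second, with your choice $F_x(y)=f(x-\tilde L(y))$ one gets $A_k^\Gamma F_x(y)=A_k^{-\tilde P}f(x-\tilde L(y))$, where $\tilde P=\tilde L\circ(\cdot)^\Gamma$. To obtain the average along $\tilde P$ itself, take $F_x(y)=f(x+\tilde L(y))$, or run the same argument with $-\tilde L$, which is harmless since the transference works for any integer linear map.
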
 
Theorem \ref{thm:1.1} and Theorem \ref{thm:main} are equivalent. Clearly inequality \eqref{eq:15} implies inequality \eqref{eq:14}. The reverse implication is a consequence of the lifting lemma, see \cite[Chapter~11, Section~2.4]{bigs}. Therefore, we only prove Theorem \ref{thm:1.1}. An important property is that the operators $A_k^\Gamma f$ are convolution operators.
Namely, we have $A_k^\Gamma f=f\ast K_k$, where the kernel is a discrete probability measure of the form
\begin{align}\label{eq:2.1}
	\begin{split}
K_k(x)
:=
\frac{1}{|\Omega_k \cap \ZZ^d|}\sum_{n\in\Omega_k \cap \ZZ^d}
\ind{\{(n)^\Gamma\}}(x) =\int_{\TT^\Gamma} \ex(x\cdot\xi)S_k(\xi)\,d\xi,
\end{split}
\end{align}
with
\begin{equation*}
S_k(\xi)
:=
\frac{1}{|\Omega_k \cap \ZZ^d|}\sum_{n\in \Omega_k \cap \ZZ^d} \ex(-(n)^\Gamma\cdot\xi).
\end{equation*}

Theorem \ref{thm:main} for $d=1$ was proved by Bourgain in a series
of groundbreaking papers \cite{B1,B2,B3}. The case of general
$d \in \mathbb{Z}_+$ was understood in \cite{MST1}. In these papers it
was also shown that these $\ell^2$ bounds extend to the corresponding
$\ell^p$ bounds for all $p \in (1,\infty]$. Here we have chosen to
prove inequality \eqref{eq:1.1} as an illustration of
almost-orthogonality methods in the discrete analogues of harmonic
analysis, and we do not aspire to claim that we proved anything
new.

These days the inequality \eqref{eq:1.1} is very well
understood and has a fairly clean proof even on $\ell^p(\ZZ^\Gamma)$ spaces for
$p \in (1,\infty]$. Namely, exploiting the convolution structure combined with Plancherel's theorem and the Hardy--Littlewood circle method, see for instance \cite{MST1, MST2, advances, MSS}, one can construct an approximating operator for \(A_k^\Gamma f\), which on the Fourier-transform side will look like \(S_k(\xi)\) restricted to major arcs, i.e. a family of intervals centered at reduced rational fractions with small denominators. The symbol \(S_k(\xi)\) restricted to the minor arcs, i.e. the complement of the major arcs, is highly oscillatory and its contribution can be handled by Plancherel’s theorem combined with Weyl’s inequality from Lemma \ref{lem:A3}. On major arcs the approximating operator can be split into small and large scales. Then the argument can be completed by appealing to the Rademacher--Menshov inequality (see inequality \eqref{eq:16}) and the Ionescu--Wainger multiplier theorem \cite{IW} for small scales and the sampling method of Magyar--Stein--Wainger \cite{MSW} for large scales, where the bounds are controlled by their continuous counterparts.

Now in our proof of Theorem \ref{thm:1.1} we will also rely on using the Hardy--Littlewood circle method, but instead of using classical Fourier analysis based on Plancherel's theorem we will use the $TT^*$ methods. Our motivation for presenting the proof of Theorem \ref{thm:1.1} using almost-orthogonality methods emerged from the following two remarks:

\begin{itemize}
\item[(i)] Almost-orthogonality methods were developed in the field of
discrete analogues of harmonic analysis from its very beginning
\cite{IMSW, KR, MSWh, PPhd, SWd}. However, these methods were only
applied to very specific classes of Radon operators --- specifically,
Radon operators along polynomial mappings, which imposed severe
restrictions on the degree of the polynomial mappings. A significant
progress was made in \cite{IMW}, where the authors established
$\ell^2$-boundedness of the discrete singular integral Radon transform
on nilpotent groups of step two for arbitrary polynomial
mappings. This was a remarkable breakthrough in the field that shed
light on other discrete analogues in harmonic analysis as well as
other problems having a discrete flavor.

\smallskip
\item[(ii)] Only recently, almost-orthogonality methods were developed
in \cite{IMMS} to prove an analogue of Theorem \ref{thm:1.1} on
discrete nilpotent groups of step two. The ideas invented in
\cite{IMMS} resulted in a robust \textit{nilpotent circle method} that
allowed the authors to handle nilpotent variants of maximal
inequalities from \eqref{eq:1.1} as well as variants of Waring
problems on universal nilpotent groups of step two \cite{IMMSWar}. The
proof methods from \cite{IMMS, IMMSWar} developed almost-orthogonality
techniques that go far beyond classical Fourier tools, which are not
available in the non-commutative nilpotent setting. We believe that
the methods from \cite{IMMS, IMMSWar} have prospects to make progress
on similar problems on nilpotent groups of higher order and contribute
to the Bergelson--Leibman conjecture \cite{BL}, a major open problem in
pointwise ergodic theory, see  also to \cite{IMMS,
KMPWW}.
\end{itemize}

In view of these two remarks, it is clear that almost-orthogonality
methods play a key role in the field of discrete analogues of harmonic
analysis, and Theorem \ref{thm:1.1} provides a simple model to
illustrate the key features of the $TT^*$ methods. We also believe that
presenting the key ideas from \cite{IMMS} in the context of inequality
\eqref{eq:1.1} may be a starting point for understanding discrete
variants of maximally modulated polynomial Carleson--Radon type
operators from \cite{AMPY, GPRY,  PY}. This also appears to be an
interesting new direction for investigations in the field of discrete analogues of harmonic
analysis.

As mentioned above, in our argument the Plancherel theorem is replaced
by $TT^*$ arguments. In fact, we will use two types of
almost-orthogonality arguments: explicitly, identity \eqref{eq:9} to
handle all pieces of the kernel from \eqref{eq:2.1} that contribute to
the error term and produce decay in scales, and the
Cotlar--Knapp--Stein lemma to handle all other terms contributing to
the asymptotic formula of our kernel \eqref{eq:2.1}.
However, working with Radon-type operators the
$TT^*$ arguments cannot be applied directly; instead, as in
\cite{CNSW} and \cite{IMW, IMMS}, we have to consider high-order
$TT^*$ arguments as in \eqref{eq:13}, i.e. \((TT^*)^r\) for
 large \(r \in \mathbb{Z}_+\), which exhibits more
regularity. This can be easily  seen for instance using $M_t$ from
\eqref{eq:17}. Indeed, for any \(r \in \mathbb{Z}_+\),  the
multiplier corresponding to \((M_tM_t^*)^r\) satisfies
\(\big|\int_0^1 \ex(t\xi y)\, dy\big|^{2r} \lesssim (1+|t\xi|)^{-2r}\),
while the multiplier corresponding to \(M_t\) satisfies
\(\big|\int_0^1 \ex(t\xi y)\, dy\big| \lesssim (1+|t\xi|)^{-1}\). Hence
the operator \((M_tM_t^*)^r\) becomes more regular as \(r\)
increases. This approach was exploited in the continuous setting in
\cite{CNSW}, and later in \cite{IMW, IMMS}, to bypass using
Plancherel's theorem, which is not available in non-commutative
settings.

Since the Hardy--Littlewood circle method also underpins our
argument and dictates how to decompose our kernel \eqref{eq:2.1},
therefore studying  \((TT^*)^r\) for large
\(r \in \mathbb{Z}_+\) corresponds to a simple arithmetic 
 heuristic lying behind the proofs of Waring-type problems. This heuristic says that, the more variables
that occur in the Waring-type equation, the easier it is to find a
solution. By adjusting the parameter \(r\) (usually taking \(r\) very
large), we can always decide how many variables we have at our
disposal, making the operators in our questions ``smoother and
smoother''. For $(M_t M_t^*)^r$ it is manifested how fast the
multiplier decays at infinity as we have seen above. We now focus on proving Theorem \ref{thm:1.1}.

\section{Notation and basic tools}
\label{sec:not}

We now set up the notation and gather the basic tools that will be used throughout the article. 

\subsection{Basic notation}
Let $\ZZ_+ := \{1,2,\dots\}$ be the set of positive integers,
$\NN := \{0,1,2,\dots\}$ be the set of nonnegative integers,
$\RR_+ := (0,\infty)$ be the set of positive real numbers, and
$\TT$ be the one-dimensional torus. Given
$m \in \Z_+$, the sets $\ZZ^m, \, \QQ^m, \, \RR^m$, $ \CC^m$ and
$\TT^m$ have standard meaning, and 
$\ZZ_Q^m :=\{0,1, \dots, Q-1\}^m$ for any $Q\in\ZZ_+$.

For
any $x\in\RR$ we define the floor function
$\lfloor x \rfloor: = \max\{ n \in \ZZ : n \le x \}$ and 
the fractional part 
$\{x\}:=x-\lfloor x\rfloor$ as well as the distance to the nearest integer
by $\|x\|:={\rm dist}(x, \ZZ)$. 

For
any positive real number $N\in\RR_+$ we define
\[
[N]:=\ZZ_+\cap(0, N]=\{1, 2,\ldots, \lfloor N \rfloor\}.
\]

For $a = (a_1,\ldots, a_m) \in \ZZ^m$ and $q\in\ZZ_+$, we denote by ${\rm gcd}(a,q):={\rm gcd}(a_1,\ldots, a_m,q)$ the greatest
common divisor of $a$ and $q$; that is, the largest  $n \in\ZZ_+$ that divides $q$ and all the components
$a_1, \ldots, a_m$. Clearly, any vector in $\QQ^m$ has a unique representation as $a/q$ with $q\in \ZZ_{+}$,
$a \in \ZZ^m$, and ${\rm gcd}(a,q)=1$.

If $A$ is a set, its indicator function will be denoted by
$\ind{A}$. When $A$ is finite, its number of elements is denoted by
$|A|$. If $S$ is a statement, then $\ind{S}$ denotes its indicator,
equal to $1$ if $S$ is true and $0$ if $S$ is false.  In particular,
we have $\ind{x\in A} \equiv \ind{A}(x)$.

Finally, for a finite set $A$, we will write
$\mathbb X^A:=\mathbb X^{|A|}$, whenever
$\mathbb X\in\{\ZZ, \QQ, \RR, \CC, \TT\}$.  This notation will be
mainly applied to \(A=\Gamma\), where
\(\Gamma \subseteq \mathbb{N}^d \setminus \{0\}\) is a finite set of multi-indices. Then an element
\(x \in \mathbb{X}^\Gamma\) is a vector of the form
$x := (x_\gamma)_{\gamma \in \Gamma}=(x_\gamma:\gamma \in \Gamma)$,
whose entries \(x_\gamma\) are elements of \(\mathbb{X}\).

\subsection{Asymptotic notation}
For two nonnegative quantities $A, B$ we write $A \lesssim B$ if there
is an absolute constant $C>0$, which may change from line to line,
such that $A\le CB$. We also write $A \simeq B$ when
$A \lesssim B\lesssim A$. We will write $\lesssim_{\delta}$ or
$\simeq_{\delta}$ to emphasize that the implicit constant depends on
$\delta$.

For two functions $f \colon X\to \C$ and  $g \colon X\to \RR_+$, we write $f = O(g)$ if there
exists a constant $C>0$ such that $|f(x)| \le C g(x)$ for all
$x\in X$. We will also write $f = O_{\delta}(g)$ if the implicit
constant depends on $\delta$.

Throughout the paper, for any $A\in \RR$, we will use the following trivial estimate
\begin{equation}\label{eq:dee}
1+|A + O(1)|\simeq 1+|A|,
\end{equation}
where the implied constant does not depend on $A$.

\subsection{Euclidean spaces}
The standard inner product on $\RR^m$ is denoted by
$x\cdot\xi:=\sum_{k\in [m]}x_k\xi_k$ for every $x=(x_1,\ldots, x_m)$
and $\xi=(\xi_1, \ldots, \xi_m)\in\RR^m$. The inner product induces
the Euclidean norm $|x|_2:=\sqrt{x\cdot x}$, which will be abbreviated
to $|x|$. For any $x\in\RR^m$ we will also consider its $\ell^\infty$-norm given by
$|x|_{\infty} := \max\{\abs{x_j} : j\in[m]\}$. Clearly we have $|x|_{\infty}\le |x|_{2}\le \sqrt{m}|x|_{\infty}$ for any $x\in\RR^m$.

Throughout the paper the $m$-dimensional torus $\TT^m = \RR^m/\ZZ^m$, which unless otherwise stated will be  identified with
$[-1/2, 1/2)^m$, is a priori endowed with the   periodic norm
\begin{align}
	\label{eq:99}
	\|\xi\|:=\Big(\sum_{k\in[m]} \|\xi_k\|^2\Big)^{1/2}
	\qquad \text{for}\qquad
	\xi=(\xi_1,\ldots,\xi_m)\in\TT^m,
\end{align}
where $\|\xi_k\|=\dist(\xi_k, \ZZ)$ for all $\xi_k\in\TT$ and
$k\in[m]$. However, identifying $\TT^m$ with $[-1/2, 1/2)^m$, we
see that the norm $\|\cdot\|$ coincides with the Euclidean norm
$\abs{\:\cdot\:}$ restricted to $[-1/2, 1/2)^m$.

\subsection{Function spaces} All vector spaces in this paper will be
defined over the complex numbers $\CC$. The triple
$(X, \mathcal B(X), \mu)$ is a measure space $X$ with $\sigma$-algebra
$\mathcal B(X)$ and $\sigma$-finite measure $\mu$.  The space of all
measurable functions whose modulus is integrable with $p$-th power is
denoted by $L^p(X)$ for $p\in(0, \infty)$, whereas $L^{\infty}(X)$
denotes the space of all essentially bounded measurable functions.

Here, we will 
have $X=\RR^m$ or $X=\TT^m$ equipped with the Lebesgue measure, and
$X=\ZZ^m$ endowed with the counting measure. If $X$ is endowed with
counting measure we will abbreviate $L^p(X)$  to
$\ell^p(X)$. 

For $T \colon B_1 \to B_2$, a continuous linear (sublinear)  map between two normed
vector spaces $B_1$ and  $B_2$, we use $\|T\|_{B_1 \to B_2}$ to denote its
operator norm. This will be usually used with $B_1=B_2=\ell^p(\ZZ^m)$.

\subsection{Smooth functions and derivatives}
Let ${\bm \eta} \colon \RR\to[0, 1]$ be a smooth and even cutoff function
such that $\ind{[-1, 1]}\le{\bm \eta}\le \ind{(-2, 2)}$. Additionally, one can  think that ${\bm \eta}$ is nondecreasing on $(-\infty, 0]$ and nonincreasing on $[0, \infty)$.   For a given
$\Gamma\subseteq\NN^d \setminus \{0\}$, let $\eta_0 \colon \RR^\Gamma\rightarrow[0,1]$
be given by $\eta_0(x):=\prod_{\gamma\in\Gamma}{\bm \eta}(x_{\gamma})$ for any $x=(x_\gamma:\gamma\in \Gamma)\in\RR^\Gamma$. 
From this definition we see that the function $\eta_0$ is smooth, even, and has a product structure, moreover it satisfies
$\eta_0(x)=1$ for $x\in[-1,1]^\Gamma$ and  $\supp\eta_0\subseteq[-2,2]^\Gamma$. 
Next define
\begin{equation}
\label{eq:1}
\eta_{\le A}(x) :=\eta_0(2^{-A}x), \qquad A>0.
\end{equation}

The partial derivative of a function $f \colon \RR^m\to\CC$ with respect to
the $j$-th variable $x_j$ will be denoted by
$\partial_{x_j}f=\partial_j f$, while the $k$-fold partial derivative
will be denoted by
$\partial_{x_j}^kf=\partial_j^k f$ with the convention that $\partial_j^0f=f$ for any $j\in[m]$. For any multi-index $\alpha=(\alpha_1, \ldots, \alpha_m)\in\mathbb N^m$, we define 
\begin{align*}
D^{\alpha}f:=\partial_1^{\alpha_1}\cdots \partial_m^{\alpha_m}f.
\end{align*}

For functions  $f \colon \RR^\Gamma\to\mathbb C$ with  a finite set of multi-indices $\Gamma\subseteq \NN^d$, we will write $\partial_\gamma^k f $ to denote the partial derivative of $f$ of order $k$ with respect to a variable in $\RR^\Gamma$ with the index $\gamma\in\Gamma$.

\subsection{Fourier transform and Fourier series}  
Recall that $\ex (z)=e^{\dpi z}$  for every $z\in\CC$.
The Fourier transform and the inverse Fourier transform of $f\in L^1(\RR^m)$ will be denoted respectively by
\begin{align*}
\mathcal F_{\RR^m} f(\xi) &:= \int_{\RR^m} f(x) \ex (-x\cdot \xi)\ dx,\qquad \xi\in\RR^m,\\
 \mathcal F_{\RR^m}^{-1} f(x) &:= \int_{\RR^m} f(\xi) \ex (x\cdot \xi)\ d\xi,\qquad x\in\RR^m.
\end{align*}

\subsection{Canonical polynomial mappings}

For $\gamma=(\gamma_1,\ldots, \gamma_d)\in\NN^d$ and $x=(x_1,\ldots, x_d)\in\RR^d$ define 
\begin{align}
\label{eq:2}
x^{\gamma}:= x_1^{\gamma_1}\cdots\:  x_d^{\gamma_d}.
\end{align}
In other words, $x^{\gamma}$ is a  monomial of degree $\gamma_1+\ldots+ \gamma_d$ with variables $x_1,\ldots, x_d$ and exponents $\gamma_1,\ldots, \gamma_d$. For any multi-index $\gamma=(\gamma_1,\dots,\gamma_d)\in\N^d$, by a slight abuse of notation we write
$|\gamma|:=\gamma_1+\ldots+\gamma_d$ for the length of $\gamma$. This will never cause confusions
since the multi-indices will be always denoted by Greek letters.

Fix a finite and nonempty set  of multi-indices $\Gamma\subseteq\NN^d\setminus\{0\}$. Recall that a \textit{canonical polynomial mapping}  induced by the set
$\Gamma$ is given by
\begin{align}
\label{eq:3}
\RR^d\ni x\mapsto (x)^{\Gamma}:=(x^{\gamma}: \gamma\in\Gamma)\in\RR^{\Gamma}.
\end{align}
Hence $(x)^{\Gamma}$,  is simply a vector in the $|\Gamma|$-dimensional vector space whose entries are the monomials $x^{\gamma}$ indexed by the set $\Gamma$. An important feature of the polynomials in \eqref{eq:3} is their scale homogeneity.

Namely, for any $\Lambda>0$ and $x=(x_{\gamma}: \gamma\in\Gamma)\in\RR^\Gamma$, we define
$\Lambda\circ x \in \RR^\Gamma$ by
\begin{equation*}
(\Lambda\circ x)_\gamma := \Lambda^{|\gamma|}x_\gamma, \qquad \gamma\in\Gamma.
\end{equation*}
Notice that $\RR^\Gamma\ni x\mapsto \Lambda\circ x\in \RR^\Gamma$ is a homomorphism on $\RR^\Gamma$, i.e.\ 
\begin{align*}
\Lambda\circ (x+y) = \Lambda\circ x + \Lambda\circ y, \qquad x,y \in  \RR^\Gamma.
\end{align*}
Further, 
the above dilation operation leads to the following identity
\begin{equation*}
\Lambda\circ \big((x)^\Gamma\big)=(\Lambda x)^\Gamma.
\end{equation*}
Note that for any $x,y\in\RR^\Gamma$ we have
\begin{equation*}
(\Lambda\circ x)\cdot y=x\cdot(\Lambda\circ y).
\end{equation*}
We will frequently use this fact without mentioning it.

\subsection{Approximation sums by integrals}
The following lemma will be useful.
\begin{lemma}\label{lem:A1}
Fix $d\in\ZZ_+$ and a finite nonempty set of multi-indices
$\Gamma \subseteq \NN^{d} \setminus \{0\}$.
Let $M\ge |\Gamma|+1$ and $f \colon \RR^\Gamma\rightarrow\CC$ be a Schwartz function. Then for any $\xi\in[-1/2, 1/2]^\Gamma$ we have
\begin{align*}
\Big|\sum_{n\in\ZZ^\Gamma} f(n) \ex(n\cdot \xi)-\int_{\RR^\Gamma} f(x) \ex(x\cdot \xi)\,dx\Big|\lesssim_{M, \Gamma}\int_{\RR^\Gamma}\sum_{\gamma\in\Gamma} |\partial_\gamma^M f(x)|\,dx.
\end{align*}
Consequently, for every $\gamma_0\in\Gamma$ we have
\begin{align*}
\Big|\sum_{n\in\ZZ^\Gamma} f(n) \ex(n\cdot \xi)\Big|\lesssim_{M, \Gamma} |\xi_{\gamma_0}|^{-M}\int_{\RR^\Gamma} |\partial_{\gamma_0}^M f(x)|\,dx+\int_{\RR^\Gamma}\sum_{\gamma\in\Gamma} |\partial_\gamma^M f(x)|\,dx.
\end{align*}
\end{lemma}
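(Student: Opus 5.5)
We would prove the first inequality with the Poisson summation formula. Fix $\xi\in[-1/2,1/2]^\Gamma$ and put $g(x):=f(x)\ex(x\cdot\xi)$, which is again Schwartz. Poisson summation gives
\begin{align*}
\sum_{n\in\ZZ^\Gamma} f(n)\ex(n\cdot\xi)=\sum_{m\in\ZZ^\Gamma}\mathcal F_{\RR^\Gamma}g(m)=\sum_{m\in\ZZ^\Gamma}\int_{\RR^\Gamma}f(x)\ex(x\cdot(\xi-m))\,dx .
\end{align*}
The term $m=0$ is exactly the integral on the left-hand side of the lemma. So the difference we need to bound equals $\sum_{m\neq0}\mathcal F_{\RR^\Gamma}f(m-\xi)$, and it remains to control this tail.

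Each $m\neq 0$ has some coordinate $\gamma$ with $m_\gamma\neq0$. For that coordinate we integrate by parts $M$ times in $x_\gamma$, which gives
\begin{align*}
|\mathcal F_{\RR^\Gamma}f(m-\xi)|\le (2\pi|m_\gamma-\xi_\gamma|)^{-M}\int_{\RR^\Gamma}|\partial_\gamma^Mf(x)|\,dx .
\end{align*}
Since $|\xi_\gamma|\le1/2$ and $m_\gamma\neq0$, we have $|m_\gamma-\xi_\gamma|\ge|m_\gamma|/2\ge 1/2$.

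To sum over $m$, choose $\gamma(m)$ to be a coordinate realizing $|m|_\infty$, so that $|m_{\gamma(m)}|=|m|_\infty\ge1$. Then $|m_{\gamma(m)}-\xi_{\gamma(m)}|\ge |m|_\infty/2$. Bounding $\int|\partial^M_{\gamma(m)}f|$ by $\sum_\gamma\int|\partial^M_\gamma f|$, the tail is at most
\begin{align*}
\sum_{m\neq0}(\pi|m|_\infty)^{-M}\sum_{\gamma\in\Gamma}\int_{\RR^\Gamma}|\partial_\gamma^Mf(x)|\,dx .
\end{align*}
The sum $\sum_{m\neq0}|m|_\infty^{-M}$ converges precisely because $M\ge|\Gamma|+1>|\Gamma|$: there are $\lesssim k^{|\Gamma|-1}$ lattice points with $|m|_\infty=k$. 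This proves the first estimate with constant depending only on $M$ and $\Gamma$.

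For the consequence, we use the triangle inequality. The sum is bounded by the error term just established plus $|\int f(x)\ex(x\cdot\xi)\,dx|$. Integrating by parts $M$ times in $x_{\gamma_0}$ bounds this integral by $(2\pi|\xi_{\gamma_0}|)^{-M}\int|\partial_{\gamma_0}^Mf|$. Combining the two gives the stated bound.

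The only delicate step is making the summation over $m\neq0$ work uniformly in $\xi$. That is handled by the choice $\gamma(m)$ at a coordinate of maximal size, together with the restriction $\xi\in[-1/2,1/2]^\Gamma$, which keeps $|m_\gamma-\xi_\gamma|\gtrsim|m|_\infty$. Convergence of Poisson summation is unproblematic because $f$ is Schwartz.
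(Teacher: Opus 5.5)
Your proposal is correct and follows essentially the same route as the paper: Poisson summation, isolate the zero frequency, integrate by parts $M$ times in a dominant coordinate for each nonzero frequency, sum using $M>|\Gamma|$, and get the consequence via the triangle inequality plus integration by parts in $x_{\gamma_0}$. The only cosmetic difference is that you pick the coordinate maximizing $|m_\gamma|$, while the paper picks the one maximizing $|\xi_\gamma-m_\gamma|$; these are equivalent because $|\xi|_\infty\le 1/2$.
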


\begin{proof}
The second part of the claim is a simple consequence of the first part and the integration by parts applied to the integral $\int_{\RR^\Gamma} f(x) \ex(x\cdot \xi)\,dx$. Thus it suffices to show the first estimate.

By the Poisson summation formula for any $\xi\in[-1/2, 1/2]^\Gamma$ we have
\begin{align*}
\sum_{n\in\ZZ^\Gamma} f(n) \ex(n\cdot \xi)
=\sum_{n\in\ZZ^\Gamma} \int_{\RR^\Gamma}f(x) \ex\big(x\cdot (\xi-n)\big)\, dx=\int_{\RR^\Gamma}f(x)\ex\big(x\cdot \xi\big)\,dx+\sum\limits_{n\in\ZZ^\Gamma \setminus \{0\} }E_n(\xi),
\end{align*}
where
$$
E_n(\xi):= \int_{\RR^\Gamma}f(x) \ex \big(x\cdot (\xi-n)\big)\, dx,
\qquad n\in\ZZ^\Gamma \setminus \{0\}.
$$
Since for $M\ge |\Gamma|+1$ we have
$$
\sum\limits_{ n\in\ZZ^\Gamma \setminus \{0\} } |\xi-n|^{-M}\lesssim 1,
$$
uniformly in $\xi\in[-1/2,1/2]^\Gamma$, to finish the proof it suffices to show that 
\begin{align*}
|E_n(\xi)|\lesssim |\xi-n|^{-M}\int_{\RR^\Gamma}\sum_{\gamma\in\Gamma}|\partial_\gamma^M f(x)|\,dx.
\end{align*}
To this end for fixed $\xi$ and $n$ let $\gamma_0\in\Gamma$ be such that $|\xi_{\gamma_0}-n_{\gamma_0}|=\max_{\gamma\in\Gamma} |\xi_\gamma-n_\gamma|\simeq |\xi-n|$.
Then integrating by parts $M$ times with respect to $x_{\gamma_0}$ we obtain
\begin{align*}
|E_n(\xi)|
&= (2 \pi)^{-M} |\xi_{\gamma_0}-n_{\gamma_0}|^{-M} \Big|\int_{\RR^\Gamma}f(x)\partial_{\gamma_0}^M\big(\ex(x\cdot(\xi-n))\big)\,dx\Big|
\lesssim 
|\xi-n|^{-M}\int_{\RR^\Gamma}\sum_{\gamma\in\Gamma} |\partial_\gamma^M f(x)|\,dx.
\end{align*}
That concludes the proof of the lemma.
\end{proof}

\subsection{Weyl's inequality} A multidimensional Weyl inequality reads as follows.
\begin{lemma} [{\cite[Theorem A.1, p.\ 49]{advances}}] \label{lem:A3}
Fix $d\in\ZZ_+$ and a finite nonempty set of multi-indices
$\Gamma \subseteq \NN^{d} \setminus \{0\}$. There exists $\epsilon>0$ depending only on $d$ and $\Gamma$ such that, for every polynomial
$P(n) = \sum_{\gamma\in\Gamma} \xi_{\gamma} n^{\gamma}$, every $N>1$, convex set $K \subseteq B(0,N) \subseteq \R^{d}$, multi-index $\gamma_{0}\in\Gamma$, and integers $a \in \ZZ$ and $q \in\ZZ_+$ with $\gcd(a, q) = 1$ and $|\xi_{\gamma_0}-a/q|\le q^{-2}$,
we have
\begin{equation}
\label{eq:56}
N^{-d}\Big|\sum_{n \in K \cap \Z^d} e(P(n))\Big|
\lesssim_{d,\Gamma} \kappa^{-\epsilon} \log (N+1),
\end{equation}
where
\[
\kappa := \min \{q,N^{\abs{\gamma_{0}}}/q\}.
\]
The implicit constant in \eqref{eq:56} is independent of the coefficients of  $P$ and the numbers $a$, $q$, $N$.
\end{lemma}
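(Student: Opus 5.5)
We prove Lemma \ref{lem:A3} by Weyl differencing along the coordinate directions singled out by $\gamma_0$. This reduces the problem to a linear exponential sum whose frequency contains $\xi_{\gamma_0}$ times a product of the differencing parameters. We then count how often that frequency is near an integer, using the rational approximation $|\xi_{\gamma_0}-a/q|\le q^{-2}$. Write $D:=|\gamma_0|$ and $S:=\sum_{n\in K\cap\ZZ^d}\ex(P(n))$.

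\emph{Differencing.} Choose coordinates $k_1,\dots,k_{D-1},j\in[d]$ so that $e_{k_1}+\dots+e_{k_{D-1}}+e_j=\gamma_0$. We apply the van der Corput inequality $D-1$ times, the $i$-th time shifting $n$ by $h_ie_{k_i}$ with $|h_i|\le N$. Since $K$ is convex, every set $K\cap(K-h_ie_{k_i})\cap\dots$ produced along the way is again convex and contained in $B(0,N)$. We arrive at
\begin{align*}
|S|^{2^{D-1}}\lesssim N^{(2^{D-1}-D)d+(D-1)(d-1)}\sum_{|h_1|,\dots,|h_{D-1}|\le N}\Big|\sum_{n\in K_h\cap\ZZ^d}\ex\big(\Delta_{h}P(n)\big)\Big|,
\end{align*}
up to the precise powers of $N$, where $K_h$ is convex. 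The iterated difference $\Delta_h P$ is the sum of two parts:
\begin{enumerate}
\item the term $\gamma_0!\,\xi_{\gamma_0}h_1\cdots h_{D-1}\,n_j$;
\item terms coming from monomials $\gamma\ge\gamma_0-e_j$ (componentwise) other than $\gamma_0$.
\end{enumerate}
Monomials not dominating the chosen directions are annihilated by the differencing.

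\emph{Linear sum in $n_j$.} Fix all coordinates of $n$ except $n_j$. The phase restricted to the resulting line is a polynomial in $n_j$. It is linear in $n_j$ only if every surviving $\gamma\ne\gamma_0$ has $\gamma_j$ small enough. For this reason we first reduce to the case where $\gamma_0$ is maximal in $\Gamma$ in the direction $e_j$. If $\gamma_0$ is not maximal, we difference once more in $e_j$ for each dominating $\gamma$, or we choose $j$ and the order of differencing so that the dominating monomials die first. Once the sum is linear in $n_j$, the convexity of $K_h$ gives
\begin{align*}
\Big|\sum_{n\in K_h\cap\ZZ^d}\ex(\Delta_hP(n))\Big|\lesssim N^{d-1}\min\big(N,\ \|\gamma_0!\,\xi_{\gamma_0}h_1\cdots h_{D-1}+\beta(h,n')\|^{-1}\big).
\end{align*}
Here $\beta$ collects the contributions of the dominating monomials; it is handled together with the previous step.

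\emph{Counting.} Put $m:=\gamma_0!\,h_1\cdots h_{D-1}$, which ranges over $|m|\lesssim N^{D-1}$. Each value of $m$ is attained at most $N^{o(1)}$ times, by the divisor bound; this is one source of the logarithmic and $\epsilon$ losses. The standard lemma on $\sum_{|m|\le X}\min(Y,\|m\alpha\|^{-1})$ for $|\alpha-a/q|\le q^{-2}$ gives the bound $XY(q^{-1}+X^{-1}+q/(XY))\log(qXY)$. We apply it with $X=N^{D-1}$ and $Y=N$ to obtain
\begin{align*}
|S|^{2^{D-1}}\lesssim N^{2^{D-1}d}\,\big(q^{-1}+N^{-1}+qN^{-D}\big)\,N^{o(1)}\log(N+1).
\end{align*}
Taking the $2^{D-1}$-th root gives $N^{-d}|S|\lesssim \kappa^{-\epsilon}\log(N+1)$ for a suitable $\epsilon\sim 2^{1-D}$. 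The $N^{o(1)}$ divisor loss has to be removed, or absorbed into $\kappa^{-\epsilon}$, by shrinking $\epsilon$ and replacing the divisor bound with a count weighted by $\min(N,\|\cdot\|^{-1})$.

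\emph{Main obstacle.} The hard step is the treatment of the monomials $\gamma\ne\gamma_0$ that survive the differencing, namely those dominating $\gamma_0-e_j$. They make the $n_j$-frequency depend on the remaining variables and can destroy linearity. My first attempt is to order $\Gamma$ so that these monomials are eliminated first. In that approach $\kappa$ is measured against the degree-$D$ coefficient only, and the lower-order shifts $\beta$ are uniform in $h$ and so harmless in the counting step. If that fails, I would use Dirichlet's theorem to approximate the dominating coefficients as well and run a downward induction on $\Gamma$ with respect to the partial order, which is where the dependence of $\epsilon$ on $\Gamma$ enters.
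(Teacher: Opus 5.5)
The paper gives no proof of this lemma; it imports it from \cite[Theorem~A.1]{advances}. Judged on its own, your proposal has a genuine gap.

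Your differencing argument works, after a small repair, only when $\gamma_0$ is maximal in $\Gamma$ for the componentwise order. In that case every $\gamma\neq\gamma_0$ surviving the $|\gamma_0|-1$ differences satisfies $\gamma\ge\gamma_0-e_j$ and $\gamma\not\ge\gamma_0$, hence $\gamma_j=(\gamma_0)_j-1$. So it leaves no $n_j$-dependence, your $\beta$ vanishes, and the $n_j$-frequency is exactly $\gamma_0!\,\xi_{\gamma_0}h_1\cdots h_{|\gamma_0|-1}$.

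The repair concerns the divisor loss, which cannot be absorbed into $\kappa^{-\epsilon}$ when $\kappa$ is only polylogarithmic in $N$. Set $r(m)=\#\{h:\gamma_0!\,h_1\cdots h_{|\gamma_0|-1}=m\}$ and apply Cauchy--Schwarz over $m\neq0$ with weights $r(m)$, using $\sum_{m\neq0}r(m)^2\lesssim N^{|\gamma_0|-1}(\log N)^{C}$. After taking the $2^{|\gamma_0|-1}$-th root this costs less than one power of $\log N$.

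The lemma, however, is stated for every $\gamma_0\in\Gamma$, and the paper needs that generality. In the proof of \eqref{eq:Sk} the multi-index supplied by Dirichlet's principle may be dominated, e.g.\ $d=1$, $\Gamma=\{1,2\}$, $\gamma_0=1$ with $\xi_2$ on a major arc.

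For dominated $\gamma_0$ your remedies cannot work. Put $\Delta_{he_i}f(n)=f(n+he_i)-f(n)$. For nonzero shifts, $\Delta_{h_1e_{k_1}}\cdots\Delta_{h_me_{k_m}}$ kills $n^\gamma$ exactly when some $e_i$ is used more than $\gamma_i$ times. So if $\gamma\ge\gamma_0$, every such operator that kills $n^\gamma$ also kills $n^{\gamma_0}$. Consequently:
\begin{itemize}
\item no choice of $j$ or ordering lets a dominating monomial die first;
\item one more difference in $e_j$ turns your main term $\gamma_0!\,\xi_{\gamma_0}h_1\cdots h_{|\gamma_0|-1}n_j$ into a constant.
\end{itemize}
Nor is $\beta$ uniform in $h$. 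For $\gamma_0=(1,1)$ and $(2,1)\in\Gamma$, differencing in $e_1$ gives the $n_2$-frequency $\xi_{(1,1)}h+\xi_{(2,1)}(2hn_1+h^2)$, while differencing in $e_2$ makes the phase quadratic in $n_1$. In neither case does the sum reduce to $\sum_m\min(Y,\|m\alpha\|^{-1})$.

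The missing ingredient is what your fallback only names: a Dirichlet dichotomy for the dominating coefficients, run as a downward induction in the componentwise order whose base case is the maximal case above. For each $\gamma\in\Gamma$ with $\gamma\ge\gamma_0$, $\gamma\neq\gamma_0$, pick $q_\gamma\le N^{|\gamma|}\kappa^{-c}$ with $|\xi_\gamma-a_\gamma/q_\gamma|\le\kappa^{c}q_\gamma^{-1}N^{-|\gamma|}$.
\begin{itemize}
\item If some $q_\gamma\ge\kappa^{c}$, the inductively known bound for $\gamma$ already gives $\kappa^{-c\epsilon}\log(N+1)$.
\item If all $q_\gamma<\kappa^{c}$, write $n=Qm+s$ with $Q=\lcm(q_\gamma)$ and cut $K$ into boxes of side $N\kappa^{-2c}$. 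Boxes meeting $\partial K$ are negligible by convexity. On each box and residue class the dominating part of the phase is constant up to $O(\kappa^{-c})$.
\end{itemize}
In the second case the remaining sums in $m$ fall under the maximal case, with $\gamma_0$-coefficient $Q^{|\gamma_0|}\xi_{\gamma_0}$. Its approximation $Q^{|\gamma_0|}a/q$ has reduced denominator $\ge qQ^{-|\gamma_0|}$ and error $\le Q^{|\gamma_0|}q^{-2}$. You still have to convert this into an approximation satisfying the hypothesis.

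The thresholds must be small powers of $\kappa$. With thresholds $N^{\sigma}$ instead, $q$ can divide $Q^{|\gamma_0|}$ and the reduced sums carry no cancellation at all. For example, with $d=1$, $\xi_1=1/q$, $\xi_2=-1/q$ and $q<N^{\sigma}$, all the cancellation sits in the complete sum modulo $q$.

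None of this is in the proposal, so as written it proves the lemma only for componentwise-maximal $\gamma_0$.
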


\subsection{Complete exponential sums}

For $q\in\ZZ_+$ and $a\in \ZZ^\Gamma$ satisfying $\gcd(a,q)=1$ define a  normalized complete exponential sums, i.e. the  Gaussian sum
\begin{align} \label{def:Gauss}
G(a/q)=q^{-|\Gamma|}\sum_{r\in[q]^\Gamma} \ex(-(r)^{\Gamma}\cdot a/q),
\end{align}
where $(r)^\Gamma$ is a canonical polynomial mapping defined by \eqref{eq:3}.
The following estimate, which is a consequence of Lemma~\ref{lem:A3}, will be used frequently.

\begin{lemma}\label{lem:A4}
Fix $d\in\ZZ_+$ and a finite nonempty set of multi-indices
$\Gamma \subseteq \NN^{d} \setminus \{0\}$.
There exists $\epsilon'>0$ depending only on $d$ and $\Gamma$ such that
\begin{align*}
|G(a/q)|\lesssim_{d,\Gamma} q^{-\epsilon'}, \qquad a/q \in \QQ^d,
\end{align*}
provided that $\gcd(a,q)=1$.
\end{lemma}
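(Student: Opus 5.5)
We will deduce Lemma~\ref{lem:A4} from Lemma~\ref{lem:A3} by viewing the complete sum $q^{|\Gamma|}G(a/q)$ as an incomplete Weyl sum over a box of side comparable to $q$. Here the summation variable $r$ ranges over $[q]^d$, i.e.\ $\ZZ^d$ with $d$ variables; we read the normalisation as $q^{-d}$ and the condition as $a\in\ZZ^\Gamma$ with $\gcd(a,q)=1$. So we want $q^{-d}|\sum_{r\in[q]^d}\ex(P(r))|\lesssim q^{-\epsilon'}$ with $P(r)=-\sum_{\gamma\in\Gamma}(a_\gamma/q)\,r^\gamma$. The set $K=(0,q]^d$ is convex and contained in $B(0,N)$ for $N=\sqrt{d}\,q+1$, so $N\simeq q$ and $N^{-d}\simeq q^{-d}$. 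The case of bounded $q$ is trivial since $|G|\le1$, so we may assume $q$ is large.

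The key step is to choose $\gamma_0\in\Gamma$ and a reduced fraction approximating $\xi_{\gamma_0}=-a_{\gamma_0}/q$ so that $\kappa$ is large. Since $\gcd(a,q)=1$ involves all coordinates, no single $a_\gamma$ need be coprime to $q$. For each $\gamma$ write $-a_\gamma/q=b_\gamma/q_\gamma$ in lowest terms, with $q_\gamma\mid q$. Then $\lcm_\gamma q_\gamma=q$: if a prime power $p^k\| q$ divided none of the $q_\gamma$ to full order, then $p$ would divide every $a_\gamma$ and $q$, which is impossible. Hence some $q_{\gamma}$ satisfies $q_\gamma\ge q^{1/|\Gamma|}$, because $q\le\prod_\gamma q_\gamma\le(\max_\gamma q_\gamma)^{|\Gamma|}$. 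Take $\gamma_0$ to be this index, and apply Lemma~\ref{lem:A3} with the exact fraction $b_{\gamma_0}/q_{\gamma_0}$, so that the condition $|\xi_{\gamma_0}-b/q_{\gamma_0}|=0\le q_{\gamma_0}^{-2}$ holds trivially. Then
\[
\kappa=\min\{q_{\gamma_0},N^{|\gamma_0|}/q_{\gamma_0}\}\ge\min\{q^{1/|\Gamma|},\,N/q\cdot N^{|\gamma_0|-1}\}.
\]
The second entry is at least a constant because $q_{\gamma_0}\le q\lesssim N\le N^{|\gamma_0|}$, using $|\gamma_0|\ge1$. This is the main obstacle: when $|\gamma_0|=1$ this lower bound for $\kappa$ is only $\gtrsim1$, which gives no decay.

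To handle that case, we use $N^{|\gamma_0|}/q_{\gamma_0}\ge N/q_{\gamma_0}$ and, where needed, enlarge the box. Complete sums are periodic, so $q^{-d}\sum_{r\in[q]^d}=(Lq)^{-d}\sum_{r\in[Lq]^d}$ for any $L\in\ZZ_+$. We therefore apply Lemma~\ref{lem:A3} with $N\simeq Lq$ and $L=\lceil q^{\delta}\rceil$. Then $\kappa\ge\min\{q^{1/|\Gamma|},q^{\delta}\}$. Taking $\delta=1/|\Gamma|$, Lemma~\ref{lem:A3} yields
\[
|G(a/q)|\lesssim q^{-\epsilon/|\Gamma|}\log(q^{1+1/|\Gamma|}+1).
\]
The logarithm is absorbed by taking $\epsilon'=\epsilon/(2|\Gamma|)$, which depends only on $d$ and $\Gamma$ as required.
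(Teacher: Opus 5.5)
Your argument is correct and follows the route the paper indicates: the paper only asserts that Lemma~\ref{lem:A4} is a consequence of Weyl's inequality (Lemma~\ref{lem:A3}) and gives no details, and your argument supplies them. Choosing $\gamma_0$ via $\lcm_{\gamma}q_\gamma=q$ gives a reduced denominator $q_{\gamma_0}\ge q^{1/|\Gamma|}$, and enlarging the box periodically to $[Lq]^d$ with $L\simeq q^{1/|\Gamma|}$ keeps $N^{|\gamma_0|}/q_{\gamma_0}$ large even when $|\gamma_0|=1$. Your reading of \eqref{def:Gauss} with $r\in[q]^d$, normalisation $q^{-d}$ and $a\in\ZZ^\Gamma$ is the intended one.
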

\subsection{Oscillatory integrals estimates}
We shall also use the following van der Corput lemma.
\begin{lemma} [{\cite[Proposition~2.1]{SW}}] \label{lem:A2}
Fix $d\in\ZZ_+$ and a finite nonempty set of multi-indices
$\Gamma \subseteq \NN^{d} \setminus \{0\}$.	Let $P(x)=\sum_{\gamma\in\Gamma} \lambda_\gamma x^\gamma$ be a polynomial in $\RR^d$  with real coefficients of degree $\le d_{\Gamma} := \max\{|\gamma| : \gamma \in \Gamma\}$.  Then for any convex subset $\Omega$ of the closed unit ball $B$ in $\RR^d$, and for any $C^1$ function $\varphi$ on $B$, we have
	\begin{align*}
		\Big|\int_\Omega \ex(P(x))\varphi(x)\,dx\Big|
		\lesssim_{d,\Gamma} 
		\big(\max_{\gamma\in\Gamma} |\lambda_\gamma| \big)^{-1/d_{\Gamma}} \sup_{x\in B}(|\varphi(x)|+|\nabla \varphi(x)|).
	\end{align*}
	
\end{lemma}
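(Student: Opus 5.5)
I would prove Lemma~\ref{lem:A2} by slicing $\Omega$ into parallel line segments and applying the one-dimensional van der Corput lemma with a $k$-th derivative on each segment, after choosing the direction and the order $k$ carefully.

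\emph{Reductions.} Set $\lambda:=\max_{\gamma\in\Gamma}|\lambda_\gamma|$ and $\|\varphi\|_{C^1}:=\sup_{x\in B}(|\varphi(x)|+|\nabla\varphi(x)|)$. If $\lambda\lesssim 1$ the bound is trivial, since the left-hand side is at most $|B|\sup|\varphi|$. So assume $\lambda$ is large. Write $P=\sum_{j=1}^{d_\Gamma}P_j$ with $P_j$ homogeneous of degree $j$, and let $\Lambda_j$ be the largest coefficient of $P_j$ in absolute value, so that $\lambda=\max_j\Lambda_j$.

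\emph{Choice of the degree.} Fix a large constant $A=A(d,\Gamma)$. I pick $k$ to be the largest degree with $\Lambda_k\ge A^{-k}\lambda$. Such $k$ exists because the degree achieving $\lambda$ qualifies. This pigeonholing over degrees gives, for every $j>k$,
\[
\Lambda_j< A^{-j}\lambda\le A^{-(k+1)}\lambda\le A^{-1}\Lambda_k .
\]
Hence every term of degree above $k$ is dominated by $A^{-1}\Lambda_k$.

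\emph{Choice of the direction.} On the finite-dimensional space of degree-$k$ homogeneous polynomials, the norms $\max|\text{coeff}|$ and $\sup_{\omega\in S^{d-1}}|Q(\omega)|$ are equivalent. So there is a unit vector $\omega$ with $|P_k(\omega)|\gtrsim\Lambda_k$.

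\emph{Slicing.} Write $x=y+t\omega$ with $y\in\omega^\perp$. By convexity, each slice $\Omega\cap(y+\RR\omega)$ is an interval $I_y$ of length at most $2$. Set $g_y(t):=P(y+t\omega)$. Then
\[
g_y^{(k)}(t)=k!\,P_k(\omega)+\sum_{j>k}\partial_\omega^kP_j(y+t\omega),
\]
and for $|y|,|t|\le1$ the sum is $O_{d,\Gamma}(\max_{j>k}\Lambda_j)=O(A^{-1}\Lambda_k)$. Choosing $A$ large, this gives $|g_y^{(k)}|\gtrsim\Lambda_k\ge A^{-d_\Gamma}\lambda$ on $I_y$, uniformly in $y$.

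\emph{One-dimensional estimate.} I apply the one-dimensional van der Corput lemma, with a $k$-th derivative lower bound, together with the standard integration by parts that absorbs the amplitude $\varphi(y+t\omega)$. This yields
\[
\Big|\int_{I_y}\ex(g_y(t))\varphi(y+t\omega)\,dt\Big|\lesssim_k\lambda^{-1/k}\big(\sup|\varphi|+|I_y|\sup|\nabla\varphi|\big).
\]
In the linear case $k=1$, the monotonicity condition on $g_y'$ needed by van der Corput holds automatically, because $g_y'$ keeps a fixed sign when it is bounded below in absolute value.

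\emph{Conclusion.} Integrating over $y$ in the unit ball of $\omega^\perp$ gives the bound $\lambda^{-1/k}\|\varphi\|_{C^1}$. Since $\lambda\ge1$ and $k\le d_\Gamma$, we have $\lambda^{-1/k}\le\lambda^{-1/d_\Gamma}$, which is the claim.

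\emph{Main obstacle.} The delicate step is making the lower bound on $g_y^{(k)}$ uniform over all slices and all $t$. If one takes $k$ to be the degree realizing $\lambda$, the higher-degree terms may spoil the lower bound. The geometric-threshold choice of $k$ is what resolves this. The implicit constants depend only on $d$ and $\Gamma$, through the norm equivalence and through $A$.
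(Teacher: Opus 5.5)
Your strategy is sound and differs from the argument behind the cited result. One step, however, is justified incorrectly. For $k=1$ you claim $g_y'$ is monotone because it has a fixed sign, but a fixed sign does not give monotonicity. For instance, take $d=1$ and $P(x)=\lambda x+\tfrac12A^{-3}\lambda x^3$. Your rule selects $k=1$, and $g'(t)=\lambda(1+\tfrac32A^{-3}t^2)$ is positive but decreasing on $[-1,0]$ and increasing on $[0,1]$. The repair is immediate, in either of two ways:
\begin{itemize}
\item $g_y'$ is a polynomial of degree at most $d_\Gamma-1$. So $I_y$, and every subinterval of it (which you need when absorbing the amplitude), splits into at most $d_\Gamma-1$ intervals on which $g_y'$ is monotone.
\item Integrate by parts once, using $|g_y'|\gtrsim\lambda$ and $|g_y''|\lesssim\lambda$ on $B$. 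This gives $\int_{I_y}|g_y''|/|g_y'|^2\lesssim\lambda^{-1}$.
\end{itemize}
The case $k\ge 2$ needs no monotonicity hypothesis, so the rest of your one-dimensional step stands.

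The paper gives no proof of its own; it quotes Stein--Wainger. The usual proof of such an estimate is local. For each $x_0\in B$, Taylor expansion at $x_0$ shows that the derivatives $\partial^\beta P(x_0)$ with $1\le|\beta|\le d_\Gamma$ control all the $\lambda_\gamma$, uniformly in $|x_0|\le1$. Hence some directional derivative $(\xi\cdot\nabla)^kP(x_0)$ is $\gtrsim\lambda$. Since $|\nabla(\xi\cdot\nabla)^kP|\lesssim\lambda$ on $B$, this lower bound persists on a ball of fixed radius. One then covers $B$ by $O_{d,\Gamma}(1)$ cubes, each with its own direction and order, and slices the convex pieces $\Omega\cap Q$.

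You instead choose a single direction and a single order valid on all of $B$ at once. This rests on the geometric threshold $\Lambda_k\ge A^{-k}\lambda$, together with two facts: $(\omega\cdot\nabla)^k$ annihilates the homogeneous parts of degree $<k$, and it turns $P_k$ into the constant $k!P_k(\omega)$. That removes the covering step and makes the constants explicit. The price is that you use the polynomial, homogeneous structure in an essential way. The local argument is the one that extends to general smooth phases under a nondegeneracy condition such as $\sum_{1\le|\alpha|\le k}|\partial^\alpha\phi|\gtrsim1$.
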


\subsection{Choice of parameters}
We now fix $d\in\ZZ_+$ and a finite nonempty set of multi-indices
$\Gamma \subseteq \NN^{d} \setminus \{0\}$.
For future reference and for notational convenience, we will apply the
above three lemmas in such a way that the implied power savings have
the same exponent. Therefore, we define
\begin{equation}\label{eq:tau}
\tau:=\min\{\epsilon, \epsilon', 1/d_{\Gamma}, 1/|\Gamma| \},
\end{equation}
where $\epsilon$ and $\epsilon'$ are constant from Lemmas \ref{lem:A3} and \ref{lem:A4}, and $d_{\Gamma}$ is the quantity from Lemma \ref{lem:A2}.

We further define the following quantities:
\begin{enumerate}[label*={(\alph*)}, itemsep=2pt ]
\item \label{item1} Let $\delta:=\delta(d,\Gamma)\in(0, 1)$ be a small positive constant satisfying $\delta<\big(100(d+|\Gamma| + d_{\Gamma})\big)^{-100}$. 
\item \label{item2} Let $r:=r(d,\Gamma, \delta)$ be a large dyadic constant such that $r\tau\ge \delta^{-4}$, where $\tau$ is defined in \eqref{eq:tau}.
\item \label{item3} Let $D:=D(d,\Gamma, \delta, r)\in\RR_+$ be such that $D>r\delta^{-4}$.
\item  \label{item4} Finally, for $s\ge 0$, we define
\begin{align}
\label{eq:4}
\kappa_s :=2^{2D(s+1)^2}.
\end{align}
\end{enumerate}

\subsection{Auxiliary lemma}
We introduce the family of  cutoff functions
\begin{equation}
\label{eq:5}
\phi_k(x):= \phi_{k,\delta}(x) := \eta_{\le \delta k}(2^{-k}\circ x), 
\qquad x \in  \RR^\Gamma, \quad k \in \NN.
\end{equation}
Further, we introduce the following auxiliary periodic function
\begin{equation}\label{eq:6.4}
	\Phi_k(\beta):=\sum_{g\in\ZZ^\Gamma} \phi_k(g) \ex(-g\cdot\beta), 
	\qquad \beta\in \TT^\Gamma, \quad k \ge D.
\end{equation}
We have the following useful estimate.
\begin{lemma}\label{lem:est}
	For a fixed $M \ge |\Gamma| + 1$
	we have 
\begin{equation}\label{eq:6.5}
	|\Phi_k(\beta)|
	\lesssim_{M}
	\prod_{\gamma\in\Gamma}2^{k(|\gamma|+\delta)} 
	\big(1+2^{k(|\gamma|+\delta)} \|\beta_\gamma\| \big)^{-M}, 
	\qquad \beta\in \TT^\Gamma, \quad k \ge D,
\end{equation}
and in particular
\begin{equation}\label{eq:FkL1}
	\|\Phi_k\|_{L^1(\TT^\Gamma)}\lesssim 1, \qquad k \ge D.
\end{equation}
\end{lemma}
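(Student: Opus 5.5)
Since $\phi_k$ is a tensor product, $\phi_k(g)=\prod_{\gamma\in\Gamma}{\bm \eta}\big(2^{-k|\gamma|-\delta k}g_\gamma\big)$, the sum defining $\Phi_k$ in \eqref{eq:6.4} factorizes:
\begin{align*}
\Phi_k(\beta)=\prod_{\gamma\in\Gamma}\sum_{g_\gamma\in\ZZ}{\bm \eta}(L_\gamma^{-1}g_\gamma)\,\ex(-g_\gamma\beta_\gamma),
\qquad L_\gamma:=2^{k(|\gamma|+\delta)}.
\end{align*}
It therefore suffices to prove the one-dimensional estimate
\begin{align*}
\Big|\sum_{m\in\ZZ}{\bm \eta}(m/L)\,\ex(-m\theta)\Big|\lesssim_M L\,(1+L\|\theta\|)^{-M},\qquad \theta\in\TT,\ L\ge1,
\end{align*}
and then multiply the bounds over $\gamma\in\Gamma$. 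The condition $k\ge D$ enters only through $L\ge1$.

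For the one-dimensional bound I will use Poisson summation, exactly as in the proof of Lemma~\ref{lem:A1} (or apply that lemma with $|\Gamma|=1$). By periodicity I may take $\theta\in[-1/2,1/2)$, so that $\|\theta\|=|\theta|$. Poisson summation gives
\begin{align*}
\sum_{m}{\bm \eta}(m/L)\,\ex(-m\theta)=\sum_{n\in\ZZ}L\,\widehat{{\bm \eta}}\big(L(\theta-n)\big).
\end{align*}
Since ${\bm \eta}$ is a Schwartz function, $|\widehat{{\bm \eta}}(y)|\lesssim_M(1+|y|)^{-M}$. The term $n=0$ contributes $L(1+L|\theta|)^{-M}$. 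For $n\neq0$ we have $|\theta-n|\ge|n|/2$, so these terms contribute at most
\begin{align*}
\sum_{n\ne0}L\,(1+L|n|/2)^{-M}\lesssim L^{1-M},
\end{align*}
and $L^{1-M}\lesssim L(1+L|\theta|)^{-M}$ because $L|\theta|\le L/2$. This proves \eqref{eq:6.5}.

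For \eqref{eq:FkL1}, I integrate each one-dimensional factor over $\TT$:
\begin{align*}
\int_{-1/2}^{1/2}L\,(1+L|\theta|)^{-M}\,d\theta\le\int_\RR(1+|u|)^{-M}\,du\lesssim1,
\end{align*}
where $u=L\theta$ and we use $M\ge2$. Taking the product over $\gamma\in\Gamma$ gives $\|\Phi_k\|_{L^1(\TT^\Gamma)}\lesssim 1$.

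No step is a genuine obstacle. The only points needing care are checking that the decay uses the periodic distance $\|\beta_\gamma\|$ (handled by reducing $\theta$ to $[-1/2,1/2)$) and that the implicit constants do not depend on $k$.
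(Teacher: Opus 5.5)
Your proof is correct, but it takes a different route from the paper's.

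\textbf{The paper's route.} The paper never factors $\Phi_k$. It applies the multidimensional sum-versus-integral comparison of Lemma~\ref{lem:A1} to $\phi_k$. This replaces $\Phi_k(\beta)$ by $\int_{\RR^\Gamma}\phi_k(x)\ex(-x\cdot\beta)\,dx$, up to an additive error of size $\int_{\RR^\Gamma}\sum_{\gamma}|\partial_\gamma^{M'}\phi_k|$. The paper then estimates that integral by integrating by parts $M$ times, only in those coordinates $x_\gamma$ with $|\beta_\gamma|>2^{-k(|\gamma|+\delta)}$.

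\textbf{Your route.} You use the tensor structure $\phi_k(g)=\prod_{\gamma}{\bm \eta}(g_\gamma/L_\gamma)$ to reduce to one-dimensional sums. You then apply Poisson summation exactly and bound every alias $n\neq0$ explicitly.

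\textbf{What your route buys.} The product-form bound \eqref{eq:6.5} comes out directly at every $\beta$. In the paper's argument, the error from Lemma~\ref{lem:A1} is additive and not of product form. The right-hand side of \eqref{eq:6.5} has its minimum at $\|\beta_\gamma\|=1/2$ for all $\gamma$, where it is comparable to $\prod_{\gamma}L_\gamma^{1-M}$. Absorbing the error there requires running Lemma~\ref{lem:A1} with an order $M'$ much larger than $M$; for instance $M'\ge M\sum_{\gamma\in\Gamma}(|\gamma|+\delta)$ suffices. The paper leaves this bookkeeping implicit.

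\textbf{What the paper's route buys.} It reuses a lemma the paper needs elsewhere. Its integration-by-parts step uses only the anisotropic bounds $|D^\alpha\phi_k|\lesssim\prod_\gamma L_\gamma^{-\alpha_\gamma}$ and $|\supp\phi_k|\simeq\prod_\gamma L_\gamma$. So it does not depend on $\phi_k$ being an exact tensor product.
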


\begin{proof}
The estimate \eqref{eq:FkL1} is a simple consequence of
\eqref{eq:6.5}, so we prove only the latter estimate.  Without any loss of
generality we may restrict our attention to
$\beta \in [-1/2,1/2]^{\Gamma}$.  By Lemma \ref{lem:A1} we have for
any $M\in\NN$ with $M \ge |\Gamma| + 1$ the identity
\begin{equation*}
	\Phi_k (\beta)
	=\int_{\R^\Gamma} \phi_k(x)\ex(- x\cdot\beta)\,dx
	+
	O\Big(\int_{\R^\Gamma}\sum_{\gamma\in\Gamma}|\partial_\gamma^M \phi_k(x)|\,dx\Big).
\end{equation*}
Clearly if $M$ is sufficiently large we have
\begin{equation*}
	\int_{\R^\Gamma}\sum_{\gamma\in\Gamma}|\partial_\gamma^M \phi_k(x)|\,dx\lesssim 2^{-Mk}|\supp \phi_k|\lesssim 2^{-Mk/2},
\end{equation*}
so it remains to treat $\int_{\R^\Gamma} \phi_k(x)\ex(-x\cdot\beta)\,dx$. Indeed, integrating by parts with $\alpha=(\alpha_\gamma)_{\gamma\in\Gamma}$, where $\alpha_\gamma= M$ if $|\beta_\gamma|>2^{-k(|\gamma|+\delta)}$ and  $\alpha_\gamma=0$ if $|\beta_\gamma|\le 2^{-k(|\gamma|+\delta)}$, we obtain
\begin{align*}
	\Big|\int_{\R^\Gamma} \phi_k(x)\ex(x\cdot\beta)\,dx\Big|
	& \simeq
	\Big|\int_{\R^\Gamma} \phi_k(x) D_{x}^\alpha\Big(\frac{\ex(x\cdot\beta)}{\beta^\alpha}\Big)\,dx\Big|\le |\beta^{-\alpha}|\int_{\R^\Gamma} |D^\alpha\phi_k(x)|\,dx
	\\
	&\lesssim 
	|\supp\phi_k|
	\prod_{\gamma\in\Gamma}\big(1+2^{k(|\gamma|+\delta)}|\beta_\gamma|\big)^{-M}
	\simeq
	 \prod_{\gamma\in\Gamma}2^{k(|\gamma|+\delta)}  \big(1+2^{k(|\gamma|+\delta)}|\beta_\gamma|\big)^{-M},
\end{align*}
which proves \eqref{eq:6.5} as desired.
\end{proof}

\subsection{General concept behind the high-order $TT^*$ argument}\label{subsec:TT*}
Assume that $T\colon \ell^2(\ZZ^\Gamma)\rightarrow \ell^2(\ZZ^\Gamma)$
is a linear and bounded operator.  The $TT^*$ method is based on the observation
that for any  $r\in\ZZ_+$, we have
\begin{align*}
\|T\|_{\ell^2(\ZZ^\Gamma)\rightarrow \ell^2(\ZZ^\Gamma)}=\|(T^* T)^r\|^{1/2r}_{\ell^2(\ZZ^\Gamma)\rightarrow \ell^2(\ZZ^\Gamma)}.
\end{align*} 
In general, as in \cite{CNSW} and as observed above, the expectation
is that the operator $(T^* T)^r$ should exhibit more regularity and
consequently be easier to analyze than $T$. We aim to apply this method
to a particular case in which $T$ is  a convolution operator. Our analysis relies on a concise representation of
the convolution kernel of $(T^* T)^r$, which we derive below. We
present the argument in a slightly more general setting, where instead
of a single operator we consider a family of operators.

Assume that we have convolution operators $U_1, T_1, \dots, U_r, T_r \colon \ell^2(\ZZ^\Gamma)\rightarrow \ell^2(\ZZ^\Gamma)$, with the $\ell^1(\ZZ^\Gamma)$ kernels $L_1, K_1, \dots, L_r, K_r$, respectively, i.e.
\begin{align*}
U_j f=f\ast L_j, \qquad T_j f=f\ast K_j, \qquad 1\le j \le r.
\end{align*}
Then the adjoint operators $U_1^*, \dots, U_r^*$ are also convolution operators with kernels $L_j^*(x)=\overline{L_j(-x)}$, i.e.
\begin{equation*}
U_j^* f=f\ast L_j^*.
\end{equation*}
For $f\in\ell^2(\ZZ^\Gamma)$ and $x\in\ZZ^\Gamma$ we have 
\begin{align*}
U_1^*T_1\dots U_r^* T_r f(x)=f\ast A^r (x):=\sum_{h_1, g_1, \dots, h_r, g_r\in\ZZ^\Gamma}\Big(\prod_{j=1}^r L_j^*(h_j) K_j(g_j)\Big) f\Big(x-\sum_{j=1}^r (g_j+h_j)\Big),
\end{align*}
where
\begin{equation*}
A^r(y):=\sum_{h_1, g_1, \dots, h_r, g_r\in\ZZ^\Gamma}\Big(\prod_{j=1}^r \overline{L_j(h_j)} K_j(g_j)\Big) \ind{\{0\}} \Big(y-\sum_{j=1}^r (g_j - h_j) \Big).
\end{equation*}
Using the identity
\begin{equation*}
\ind{\{0\}} (z)=\int_{\TT^\Gamma} \ex(z\cdot\theta)\,d\theta, \qquad z \in \ZZ^{\Gamma},
\end{equation*}
we obtain
\begin{align*}
A^r(y)=\int_{\TT^\Gamma}\ex(y\cdot\theta)\Big[\prod_{j=1}^r\Big(\sum_{h_j\in\ZZ^\Gamma}\overline{L_j(h_j)} \ex(h_j\cdot\theta)\Big)\Big( \sum_{g_j\in\ZZ^\Gamma}K_j(g_j) \ex(-g_j\cdot\theta)\Big)\Big]\, d\theta.
\end{align*}
We will refer to these calculations further in the paper. 

\section{Proof of Theorem \ref{thm:1.1}}
In this section, we prove Theorem \ref{thm:1.1}. We begin with
preliminaries originating from the circle method of Hardy and
Littlewood, which will set up the proof strategy. There is a vast
literature in analytic number theory on the classical circle method;
the books by Iwaniec and Kowalski \cite{IK}, Nathanson \cite{Nat},
Vaughan \cite{Vau}, Vinogradov \cite{Vin}, and the notes of Wooley
\cite{Wooleynotes} are excellent references on this  subject.

\medskip

For $s\in \NN$, we define the set of reduced fractions with denominators of size roughly $2^s$ by
\begin{equation*}
R_s^\Gamma:=\big\{a/q\in \QQ^\Gamma : a\in\ZZ^\Gamma, \quad q\in[2^s, 2^{s+1}-1]\cap \ZZ, \quad \gcd(q, \gcd(a_\gamma:\gamma\in\Gamma))=1 \big\}.
\end{equation*}
Moreover, for each $t>0$ we let $R^\Gamma_{\le t}:=\bigcup_{0 \le s\le t}R_s^\Gamma$. 

Now, for $k\ge D$ and $s\le \delta k$ we define the periodic Fourier multipliers $\Xi_{k,s}\colon \RR^\Gamma\rightarrow[0,1]$ as follows 
\begin{align*}
\Xi_{k,s}(\xi) & := \sum_{a/q \in R_s^\Gamma} \eta_{\le \delta k}\big(2^k\circ (\xi-a/q)\big), \qquad \xi\in \RR^\Gamma, \\
\Xi_{k}^c(\xi)
& :=
 1-\sum_{0\le s\le \delta k}\Xi_{k,s}(\xi), \qquad \xi\in \RR^\Gamma.
 \end{align*}
These definitions are motivated by the circle method of Hardy and Littlewood. In the language of this theory, the function $\Xi_{k,s}$ is supported on the so-called \textit{major arcs} --- the union of intervals centered at fractions from \(R_s^\Gamma\) --- and \(\Xi_{k}^c\) is supported on the complementary set, called the \textit{minor arcs}.

\medskip

Using \eqref{eq:2.1}, \eqref{eq:5} and the fact that $\supp K_k \subseteq \{x \in \RR^{\Gamma} : |2^{-k} \circ x| \lesssim 1\}$, for $k \ge D$ we can write
\begin{equation}
\label{eq:6}
K_k(x)=K_k^c(x)+\sum_{0\le s\le\delta k} K_{k,s}(x),
\end{equation}
where 
\begin{align} \label{id:604}
	\begin{split}
K_k^c(x) &:= \phi_k(x)\int_{\TT^\Gamma} \ex(x\cdot \xi) S_k(\xi) \Xi_{k}^c(\xi)\,d\xi, \\
K_{k,s}(x) &:=\phi_k(x)\int_{\TT^\Gamma} \ex (x\cdot \xi) S_k(\xi) \Xi_{k,s}(\xi)\,d\xi.
\end{split}
\end{align}

In view of decomposition \eqref{eq:6} and the definition of $\kappa_s$ from \eqref{eq:4}, the proof of Theorem \ref{thm:1.1} reduces to showing the following three lemmas.
\begin{lemma}\label{lem:minor arcs}
	We have 
\begin{align*}
\|f\ast K_k^c\|_{\ell^2(\ZZ^\Gamma)}\lesssim 2^{-k/D}\|f\|_{\ell^2(\ZZ^\Gamma)},
\end{align*}
uniformly in $k\ge D$ and $f\in\ell^2(\ZZ^\Gamma)$.
\end{lemma}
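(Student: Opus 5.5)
The plan is to apply the high-order $TT^*$ identity from Subsection~\ref{subsec:TT*} to $T_k f:=f\ast K_k^c$ and to bound the kernel of $(T_k^*T_k)^r$ in $\ell^1$ by the trivial estimate (size of support)$\times$(supremum). First I would rewrite $K_k^c$ on the frequency side. Since $\phi_k(x)=\int_{\TT^\Gamma}\Phi_k(\beta)\ex(x\cdot\beta)\,d\beta$, multiplying by $\phi_k$ in physical space is convolution with $\Phi_k$ on $\TT^\Gamma$. Hence
\begin{align*}
K_k^c(x)=\int_{\TT^\Gamma}\ex(x\cdot\theta)\,m_k(\theta)\,d\theta,
\qquad
m_k(\theta):=\int_{\TT^\Gamma}S_k(\xi)\,\Xi_k^c(\xi)\,\Phi_k(\theta-\xi)\,d\xi .
\end{align*}
By the computation in Subsection~\ref{subsec:TT*} with $U_j=T_j=T_k$, the kernel of $(T_k^*T_k)^r$ is $A^r(y)=\int_{\TT^\Gamma}\ex(y\cdot\theta)|m_k(\theta)|^{2r}\,d\theta$.

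For the physical-space side, $A^r$ is an $r$-fold convolution of $K_k^c$ and its reflection. Since $\supp\phi_k\subseteq\{|2^{-k}\circ x|_\infty\lesssim 2^{\delta k}\}$, $A^r$ is supported in a box with $|y_\gamma|\lesssim r2^{k(|\gamma|+\delta)}$, whose cardinality is $\lesssim_r 2^{k\sum_{\gamma\in\Gamma}(|\gamma|+\delta)}\le 2^{C_\Gamma k}$. Combined with $\|A^r\|_{\ell^\infty}\le\|m_k\|_{L^\infty}^{2r}$, this gives
\begin{align*}
\|T_k\|^{2r}=\|(T_k^*T_k)^r\|\le\|A^r\|_{\ell^1}\lesssim_r 2^{C_\Gamma k}\,\|m_k\|_{L^\infty(\TT^\Gamma)}^{2r}.
\end{align*}
By \eqref{eq:FkL1}, $\|m_k\|_{L^\infty}\lesssim\sup_{\xi\in\supp\Xi_k^c}|S_k(\xi)|$. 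So everything reduces to a minor-arc Weyl bound
\begin{equation}\label{eq:minor-weyl}
\sup_{\xi\in\supp\Xi_k^c}|S_k(\xi)|\lesssim k\,2^{-c\tau\delta k},
\end{equation}
with $c=c(\Gamma)>0$ a fixed constant. Granting \eqref{eq:minor-weyl}, we get $\|T_k\|\lesssim 2^{C_\Gamma k/(2r)}k\,2^{-c\tau\delta k}$. Since $r\tau\ge\delta^{-4}$ we have $c\tau\delta\gg C_\Gamma/r$, so $\|T_k\|\lesssim 2^{-c\tau\delta k/2}$. Because $D>r\delta^{-4}\ge(\tau\delta)^{-1}\cdot(\text{large})$, this is $\le 2^{-k/D}$, as required.

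The main obstacle is \eqref{eq:minor-weyl}, which I would prove by contraposition. Set a small parameter $\rho=\delta/(4|\Gamma|)$. For each $\gamma\in\Gamma$, Dirichlet's theorem gives $a_\gamma/q_\gamma$ with $\gcd(a_\gamma,q_\gamma)=1$, $q_\gamma\le 2^{k|\gamma|-\rho k}$ and $|\xi_\gamma-a_\gamma/q_\gamma|\le q_\gamma^{-1}2^{-k|\gamma|+\rho k}\le q_\gamma^{-2}$. If for some $\gamma_0$ we have $q_{\gamma_0}\ge 2^{\rho k}$, then $\kappa=\min\{q_{\gamma_0},2^{k|\gamma_0|}/q_{\gamma_0}\}\ge 2^{\rho k}$. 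Lemma~\ref{lem:A3}, applied with $N\simeq 2^k$ and $K=\Omega_k$, then yields $|S_k(\xi)|\lesssim k2^{-\epsilon\rho k}\le k2^{-\tau\rho k}$. Otherwise every $q_\gamma<2^{\rho k}$. Let $q=\lcm(q_\gamma)\le 2^{|\Gamma|\rho k}\le 2^{\delta k/4}$ and $a/q=(a_\gamma/q_\gamma)_\gamma$, reduced, so $a/q\in R^\Gamma_{\le\delta k}$. Then $|\xi_\gamma-a_\gamma/q|\le 2^{-k|\gamma|+\rho k}\le 2^{-k|\gamma|+\delta k}$, so $\eta_{\le\delta k}(2^k\circ(\xi-a/q))=1$ and therefore $\Xi_k^c(\xi)\le 0$. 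To make this last step rigorous I need the bumps in $\sum_s\Xi_{k,s}$ to be disjoint, which I expect from the separation $|a/q-a'/q'|\ge 2^{-2\delta k}$ against the bump widths $2^{-k|\gamma|+\delta k+1}$ once $k\ge D$. Given disjointness, $\Xi_k^c(\xi)=0$, contradicting $\xi\in\supp\Xi_k^c$ up to a boundary/closure technicality that I would handle by shrinking $\rho$ slightly. This proves \eqref{eq:minor-weyl} with $c=1/(4|\Gamma|)$.
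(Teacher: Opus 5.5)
Your proposal is correct and follows the paper's proof essentially step by step: a high-order $TT^*$ argument with kernel $\int_{\TT^\Gamma}\ex(y\cdot\theta)|m_k(\theta)|^{2r}\,d\theta$, an $\ell^1$ bound by (support size)$\times$(supremum) using $\|\Phi_k\|_{L^1}\lesssim 1$, and the minor-arc Weyl bound from Dirichlet's theorem, where small denominators in every coordinate would force $\xi$ into a major arc (the paper uses the exponent $\delta/(2|\Gamma|)$ where you use $\delta/(4|\Gamma|)$). You do not need to shrink $\rho$ for the closure issue: the slack $2^{\rho k}\ll 2^{\delta k}$ already gives $\Xi_k^c=0$ on a neighbourhood of $\xi$, and in any case the bound on $m_k$ only needs $|S_k(\xi)|$ at points where $\Xi_k^c(\xi)\neq 0$.
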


\begin{lemma}\label{lem:major arcs, small scales}
	We have 
\begin{align*}
\|\sup_{\max\{D, s/\delta\}\le k\le \kappa_s}|f\ast K_{k,s}|\|_{\ell^2(\ZZ^\Gamma)}\lesssim 2^{-s/D}\|f\|_{\ell^2(\ZZ^\Gamma)},
\end{align*}
uniformly in $s\in\NN$ and $f\in\ell^2(\ZZ^\Gamma)$.
\end{lemma}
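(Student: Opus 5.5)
The bound should follow from the high-order $TT^*$ scheme of Subsection~\ref{subsec:TT*}, together with a Rademacher--Menshov-type reduction of the supremum over the short range $\max\{D,s/\delta\}\le k\le\kappa_s$. That range has only about $\kappa_s=2^{2D(s+1)^2}$ scales, so a loss logarithmic in the number of scales is $O(D(s+1)^2)$, i.e. polynomial in $s$. Such a loss is affordable provided each single operator (and each dyadic difference block) carries a gain like $2^{-2s/D}$.

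\textbf{Step 1 (reduction of the supremum).} Write the supremum over $k\in[k_0,\kappa_s]$ using the standard binary-interval (Rademacher--Menshov) decomposition. Then
\[
\big\|\sup_k|f*K_{k,s}|\big\|_{\ell^2}\lesssim \|f*K_{k_0,s}\|_{\ell^2}+\log(\kappa_s)\sup_{i}\Big(\sum_{j}\big\|f*(K_{u_{j+1},s}-K_{u_j,s})\big\|_{\ell^2}^2\Big)^{1/2},
\]
where, for each level $i\le \log_2\kappa_s\lesssim Ds^2$, the $u_j$ run through consecutive points of the dyadic net at that level. It therefore suffices to prove two things:
\begin{enumerate}
\item[(a)] a single-scale bound $\|f*K_{k,s}\|_{\ell^2}\lesssim 2^{-2s/D}\|f\|_{\ell^2}$;
\item[(b)] a square function bound $\sum_j\|f*(K_{u_{j+1},s}-K_{u_j,s})\|^2\lesssim 2^{-4s/D}\|f\|^2$ for disjoint blocks of scales.
\end{enumerate}

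\textbf{Step 2 (structure of $K_{k,s}$).} On the support of $\eta_{\le\delta k}(2^k\circ(\xi-a/q))$, split $n=qm+r$ with $r\in[q]^\Gamma$ and use Lemma~\ref{lem:A1} to approximate
\[
S_k(\xi)=G(a/q)\,\Psi_k(\xi-a/q)+O(2^{-k/2}),\qquad \Psi_k(\beta)=\frac{1}{|\Omega_k\cap\ZZ^d|}\int_{\Omega_k}\ex(-(y)^\Gamma\cdot\beta)\,dy .
\]
Here $q\le 2^{s+1}\le 2^{\delta k+1}$ is small, so the error is acceptable. The main part of $K_{k,s}$ is then
\[
\phi_k(x)\sum_{a/q\in R^\Gamma_s}G(a/q)\,\ex(x\cdot a/q)\,\mathcal K_k(x),
\]
with $\mathcal K_k$ a smooth, continuous, Radon-type kernel. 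The error terms are handled exactly like the minor-arc kernel, via Lemma~\ref{lem:minor arcs}-type $TT^*$ estimates.

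\textbf{Step 3 (bounds (a) and (b) via $(T^*T)^r$).} Take $T=f\mapsto f*K_{k,s}$ and compute the kernel of $(T^*T)^r$ by the formula in Subsection~\ref{subsec:TT*}. The result is an integral over $\theta$ of a product of $2r$ factors
\[
\sum_{a/q}G(a/q)\,\Psi_k(\theta-a/q)\,\eta_{\le\delta k}\big(2^k\circ(\theta-a/q)\big),
\]
convolved with $\Phi_k$ from Lemma~\ref{lem:est}. The cutoffs in these factors are disjoint for distinct $a/q\in R_s^\Gamma$, which are $2^{-2s-2}$-separated while $2^{-k(|\gamma|-\delta)}\ll 2^{-2s}$ because $s\le\delta k$. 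Hence only diagonal terms survive, up to the $\Phi_k$-smearing controlled by \eqref{eq:6.5}. This gives
\[
\|(T^*T)^r\|\lesssim \sup_{a/q\in R_s^\Gamma}|G(a/q)|^{2r}\cdot|R_s^\Gamma\text{-overlap}|\lesssim 2^{-2r\tau s}\,2^{O(s)} .
\]
Since $r\tau\ge\delta^{-4}$ dominates both the $O(s)$ loss and $2s/D$, taking the $2r$-th root yields (a).

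For (b), apply the Cotlar--Knapp--Stein lemma (Lemma~\ref{lem:1}) to the differences $T_j=K_{u_{j+1},s}-K_{u_j,s}$. The mixed products $T_j^*T_l$ decay like $2^{-c|j-l|}$, by Lemma~\ref{lem:A2} and the smoothness $|\Psi_k(\beta)|\lesssim(1+|2^k\circ\beta|)^{-1/d_\Gamma}$ (scales separated). The diagonal terms again carry the factor $\sup|G|^{2}$ from the high-order $TT^*$ argument.

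\textbf{Main obstacle.} The hardest part is the off-diagonal control in $(T^*T)^r$. One must show that the smearing by $\Phi_k$ and the tails of the $\eta$-cutoffs do not mix different fractions $a/q$ enough to destroy the Gauss-sum gain. I would first try to exploit the support of $\Phi_k$ at scale $2^{-k(|\gamma|+\delta)}$ against the $2^{-2s}$ separation of the fractions, with the $O(2^{-Mk})$ tails summed crudely over the $\lesssim 2^{2s|\Gamma|}$ fractions.
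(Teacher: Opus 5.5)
Your architecture is essentially the paper's: Rademacher--Menshov with a loss $\log\kappa_s\simeq D(s+1)^2$, a single-scale bound, and a square-function bound via almost-orthogonality. Step (a) is fine. Step (b), which carries all the difficulty of the lemma, has two concrete gaps.

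\textbf{The Cotlar--Knapp--Stein bookkeeping.} Suppose only the diagonal carries the arithmetic gain, $\gamma(0)\lesssim 2^{-2\tau s}$, while off the diagonal you only have $\gamma(m)\lesssim 2^{-cm}$. Then the constant $A=\sum_m\sqrt{\gamma(m)}$ in Lemma~\ref{lem:1} is only $O(1)$. After the $\log\kappa_s$ loss, nothing decays in $s$, so the off-diagonal terms must carry the $s$-gain as well. In your telescoped setting this is repairable. Each $T_j$ is a difference of two single-scale operators, so (a) gives $\|T_j^*T_l\|\le\|T_j\|\|T_l\|\lesssim 2^{-2\tau s}$. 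Hence $\gamma(m)\lesssim\min\{2^{-2\tau s},2^{-c(m-1)}\}\le 2^{-\tau s}2^{-c(m-1)/2}$, which yields $A\lesssim 2^{-\tau s/2}\le 2^{-2s/D}$, since $\tau\ge 4/D$ by the choice of $r$ and $D$. The paper instead gets both gains simultaneously in Lemma~\ref{lem:2.3}.

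\textbf{The off-diagonal decay is not justified by what you cite.} The bound $|J(\theta)|\lesssim(1+|\theta|)^{-1/d_\Gamma}$ from Lemma~\ref{lem:A2} controls the larger-scale factor only where $|2^{u_l}\circ\beta|\gtrsim 1$. It holds equally for the plain averages $K_{k,s}$, and there $K_{j,s}^*K_{l,s}$ has multiplier $\approx|G(a/q)|^2$ at $\theta=a/q$, which does not decay in $|j-l|$. You also need the smaller-scale difference to cancel at the origin: $|J_{u_{j+1}}(\beta)-J_{u_j}(\beta)|\lesssim|2^{u_{j+1}}\circ\beta|$, which is $\int\rho=0$ in the paper's notation. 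With $t=|2^{u_l}\circ\beta|$, the product of the two difference multipliers near each $a/q$ is then $\lesssim\min\{2^{-(u_l-u_{j+1})}t,\,t^{-1/d_\Gamma}\}$.

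Arguing on the multiplier side like this means using Plancherel. In that case neither high powers nor Cotlar--Knapp--Stein are needed. The multiplier of $f\mapsto f\ast K_{k,s}$ is bounded by $\sup|S_k\Xi_{k,s}|\,\|\Phi_k\|_{L^1(\TT^\Gamma)}\lesssim 2^{-\tau s}$, and $\sup_\theta\sum_j|\widehat{T_j}(\theta)|^2\lesssim 2^{-2\tau s}$ follows directly.

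In the paper's Plancherel-free framework, where only $\ell^1$ norms of kernels are controlled, preserving this cancellation is the real difficulty. The kernels carry the phases $\ex(x\cdot a/q)$ and $\ex(x\cdot b/q')$. The paper therefore splits $X^r_{k,a/q}\ast Y_{j,b/q'}$ into residue classes modulo $Q=qq'$, so that the phases become constant without putting absolute values inside $\sum_y Y_j(Qy+c)$. The term $I_1$ then uses the smoothness of $X^r_k$, and $I_2$ uses $\widetilde{J_j}(0)=0$.

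The same issue affects (a) if you bound $\|(T^*T)^r\|$ by the $\ell^1$ norm of its kernel. A size-times-support estimate loses a factor $2^{c\delta k}$, which $2^{-2r\tau s}$ cannot absorb because $k$ ranges up to $\kappa_s$; you need the decay \eqref{eq:16.3}.

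So the obstacle you singled out, mixing of different fractions, is the comparatively easy part: disjoint supports plus Poisson summation, as in Lemma~\ref{lem:2.1}. The step you state in one line is where the content of the lemma lies.
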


\begin{lemma}\label{lem:major arcs, large scales}
	We have 
\begin{align*}
\|\sup_{k\ge \kappa_s}|f\ast K_{k,s}|\|_{\ell^2(\ZZ^\Gamma)}\lesssim 2^{-s/D}\|f\|_{\ell^2(\ZZ^\Gamma)},
\end{align*}
uniformly in $s\in\NN$ and $f\in\ell^2(\ZZ^\Gamma)$.
\end{lemma}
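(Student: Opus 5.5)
We prove Lemma~\ref{lem:major arcs, large scales} by comparing $f\ast K_{k,s}$, for $k\ge\kappa_s$, with a model operator that factors as an arithmetic piece times a continuous averaging piece. The maximal function of the model is then controlled by a continuous maximal inequality lifted to $\ZZ^\Gamma$, and the difference is summed in $k$ through a $TT^*$ estimate with geometric decay.

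\textbf{Step 1: the model kernel.} On the support of $\eta_{\le\delta k}(2^k\circ(\xi-a/q))$ we write $\xi=a/q+\beta$. Splitting $n=qm+r$ with $r\in[q]^d$ gives
\[
S_k(a/q+\beta)=G(a/q)\,\Phi^{\mathrm{cont}}_k(\beta)+O\big(2^{-k/2}\big),
\qquad
\Phi^{\mathrm{cont}}_k(\beta):=|\Omega|^{-1}\int_\Omega \ex\big(-(2^kx)^\Gamma\cdot\beta\big)\,dx .
\]
This uses Lemma~\ref{lem:A1}, or simply a Riemann-sum argument, together with $q\le 2^{s+1}\le 2^{\delta k+1}$ and $|2^k\circ\beta|\lesssim 2^{2\delta k}$. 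The model kernel is therefore
\[
L_{k,s}(x):=\phi_k(x)\sum_{a/q\in R_s^\Gamma}G(a/q)\,\ex(x\cdot a/q)\int_{\RR^\Gamma}\ex(x\cdot\beta)\,\Phi^{\mathrm{cont}}_k(\beta)\,\eta_{\le\delta k}(2^k\circ\beta)\,d\beta .
\]

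\textbf{Step 2: the error $K_{k,s}-L_{k,s}$.} We bound the error operator by the high-order $TT^*$ identity of Subsection~\ref{subsec:TT*} with exponent $r$. The kernel of $(T^*T)^r$ is an integral of products of Fourier-side functions. After smoothing by $\phi_k$, each factor is controlled through Lemma~\ref{lem:est}: the bound $\|\Phi_k\|_{L^1(\TT^\Gamma)}\lesssim1$ converts the cutoff $\phi_k$ into a convolution on the torus. The number of fractions in $R^\Gamma_s$ is about $2^{s(|\Gamma|+1)}$ and each error term is $O(2^{-k/2})$. Since $k\ge\kappa_s=2^{2D(s+1)^2}$, we get $\|f\ast(K_{k,s}-L_{k,s})\|_{\ell^2(\ZZ^\Gamma)}\lesssim 2^{-k/4}\|f\|_{\ell^2(\ZZ^\Gamma)}$. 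Summing over $k\ge\kappa_s$ gives the bound $2^{-s/D}$ with plenty of room.

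\textbf{Step 3: the maximal function of the model.} We replace the $\phi_k$-truncated exponential sum by an arithmetic multiplier at the fixed scale $2^{\kappa_s\delta}$. This is legitimate because, for $k\ge\kappa_s$, the arc widths $2^{\delta k}$ all dominate the fixed cutoff $\eta_{\le \delta\kappa_s}$, and the discrepancy is again summable in $k$ by the argument of Step 2. Write $\Pi_s$ for the operator with multiplier $\sum_{a/q\in R_s^\Gamma}G(a/q)\,\eta_{\le\delta\kappa_s}(\cdot-a/q)$, built using the torus function $\Phi_{\kappa_s}$. Then $\sup_k$ of the model is bounded by the continuous Radon maximal function applied to each modulated piece $\ex(x\cdot a/q)\,g$ with $g$ near the fraction $a/q$. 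Because the arcs are disjoint and each piece is transported by the sampling principle of Magyar--Stein--Wainger, the continuous maximal operator along $(x)^\Gamma$ transfers to $\ZZ^\Gamma$ uniformly in $q$ when $q$ is small compared to the scale. This yields
\[
\Big\|\sup_{k\ge\kappa_s}\big|f\ast L_{k,s}\big|\Big\|_{\ell^2(\ZZ^\Gamma)}\lesssim \|\Pi_s f\|_{\ell^2(\ZZ^\Gamma)}+(\text{summable errors}).
\]
Finally, $\|\Pi_s\|_{\ell^2\to\ell^2}\lesssim\sup_{a/q\in R_s^\Gamma}|G(a/q)|\lesssim 2^{-s\tau}$ by Lemma~\ref{lem:A4}. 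This norm bound is proved by Cotlar--Knapp--Stein (Lemma~\ref{lem:1}) across the disjoint arcs, or directly by $TT^*$ because the pieces are orthogonal. Since $\tau\ge 1/D$, this gives the required $2^{-s/D}$.

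\textbf{Main obstacle.} The hard step is Step 3: proving the continuous maximal bound and the sampling transfer with constants uniform in $s$ without Plancherel, using only $TT^*$ and Cotlar--Knapp--Stein. My first attempt would be to write the model as $\Pi_s$ composed with a continuous-type average whose kernel is $\phi_k$ times the inverse transform of $\Phi^{\mathrm{cont}}_k$. The continuous maximal bound would then follow by decomposing $\sup_k$ into a Littlewood--Paley square function, with the off-diagonal decay coming from Lemma~\ref{lem:A2}.
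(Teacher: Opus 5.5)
Your proposal is correct in outline and has the same overall architecture as the paper. The difference is that you carry out the decisive separation step on the frequency side, while the paper does it in physical space.

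\textbf{Common structure.} Both arguments do the following.
\begin{itemize}
\item They replace $S_k(a/q+\beta)$ by $G(a/q)J_k(\beta)$ on the major arcs. Your Step~1 is Lemma~\ref{fact:2.2}, and your $\Phi^{\mathrm{cont}}_k$ is $J_k$.
\item They discard an error that is summable because $k\ge\kappa_s$. In your Step~2 no $TT^*$ is needed, since directly $\|K_{k,s}-L_{k,s}\|_{\ell^1(\ZZ^\Gamma)}\lesssim 2^{s(|\Gamma|+1)}2^{2\delta k|\Gamma|}2^{-k/2}$.
\item They then split off an arithmetic factor, small by Lemma~\ref{lem:A4}, from a continuous maximal factor bounded uniformly in $s$.
\end{itemize}

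\textbf{Where the routes differ.} You factor the model multiplier as $\Pi_s$ times a $Q_s$-periodized continuous multiplier and quote the Magyar--Stein--Wainger sampling principle. You bound $\Pi_s$ by disjointness of the arcs, i.e.\ by Plancherel. The paper instead proves $K_{k,s}(Q_sy+b)=W_{k,Q_s}(Q_sy)V_s(b)+E_{k,s}(y,b)$ with $Q_s=(2^{s+1})!$. This turns the operator into a tensor product of $\mathcal W_{\ast,Q_s}$ on $Q_s\ZZ^\Gamma$ and $\mathcal V_s$ on $\ZZ^\Gamma_{Q_s}$.
\begin{itemize}
\item $\mathcal V_s$ is handled by $TT^*$ on the finite group (Lemma~\ref{lem:3.2}).
\item $\mathcal W_{\ast,Q}$ is handled by transferring to $\RR^\Gamma$ through the piecewise-constant extension $F_f$, then using $M_{\rho_\Gamma}$ and Cotlar--Knapp--Stein (Lemma~\ref{lem:3.3}).
\end{itemize}
In effect the paper gives a Plancherel-free proof of the special case of the sampling principle it needs, which is the point of the note. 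Your route is shorter if MSW and Plancherel are allowed. In that case, however, the ``main obstacle'' you name disappears. With Plancherel, the continuous maximal bound follows from $M_{\rho_\Gamma}$ plus a square function controlled by $\sum_k\min\{|2^k\circ\xi|,|2^k\circ\xi|^{-1/d_\Gamma}\}^2\lesssim 1$, via Lemma~\ref{lem:A2}.

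\textbf{Two points in Step~3 must be stated correctly.}

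First, the fixed cutoff must be localized in the unscaled variable, e.g.\ $\eta_{\le\delta\kappa_s+1}(2^{\kappa_s}\circ(\xi-a/q))$. Your $\eta_{\le\delta\kappa_s}(\xi-a/q)$ does not localize at all. The correct cutoff equals $1$ on the scale-$k$ arcs for every $k\ge\kappa_s$: it contains them, not conversely. So after removing $\phi_k$, which costs only $O(2^{-k})$ in $\ell^1$, inserting it is exact. If you instead shrank the rescaled cutoff $\eta_{\le\delta k}(2^k\circ\cdot)$ to $\eta_{\le\delta\kappa_s}(2^k\circ\cdot)$, the discrepancy would be of size up to $2^{-\delta\kappa_s/d_\Gamma}$ for every $k$. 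Summing norms over $k$ as in Step~2 would then fail. Such pieces are exactly what the paper's $\omega$-decomposition and Cotlar--Knapp--Stein argument in Lemma~\ref{lem:3.3} are designed for.

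Second, the sampling principle must be applied once, after factoring out $\Pi_s$, to the multiplier periodized with a common multiple $Q_s$ of all denominators in $R_s^\Gamma$. This needs arcs of width $\ll Q_s^{-1}$, which is one reason $\kappa_s$ must be so large. Treating each fraction $a/q$ separately, as your phrase ``each modulated piece'' suggests, loses a factor $|R_s^\Gamma\cap[0,1)^\Gamma|\approx 2^{s(|\Gamma|+1)}$, or its square root if you exploit orthogonality. The gain $2^{-s\tau}$ cannot absorb this, because $\tau\le 1/|\Gamma|$.
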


We prove Lemmas~\ref{lem:minor arcs}, \ref{lem:major arcs, small scales} and \ref{lem:major arcs, large scales}  in the next three subsections. 
\subsection{Proof of Lemma \ref{lem:minor arcs}}
Let $\mathcal{K}_k^c f=f \ast K_k^c.$
We use a high order $TT^*$ argument. Let
\begin{equation*}
\big( (\mathcal{K}_k^c)^*\mathcal{K}_k^c\big)^r f =: f\ast A_k^{c,r}
\end{equation*}
and notice that it suffices to prove that 
\begin{equation}\label{eq:6.1}
\|A_{k}^{c,r}\|_{\ell^1(\ZZ^\Gamma)}\lesssim 2^{-k}.
\end{equation}
Indeed, assuming \eqref{eq:6.1}, we obtain
\begin{align*}
\|\mathcal{K}_k^c\|_{\ell^2(\ZZ^\Gamma)\rightarrow \ell^2(\ZZ^\Gamma)}=\big\|\big((\mathcal{K}_k^c)^*\mathcal{K}_k^c\big)^r\big\|^{1/2r}_{\ell^2(\ZZ^\Gamma)\rightarrow \ell^2(\ZZ^\Gamma)}\le \|A_{k}^{c,r}\|_{\ell^1(\ZZ^\Gamma)}^{1/2r}\lesssim  2^{-k/2r}\le 2^{-k/D},
\end{align*} 
where the last inequality is a consequence of our definition of $D$; see item \ref{item3}.

\begin{proof}[Proof of inequality \eqref{eq:6.1}]
The general representation of the kernel in Section \ref{subsec:TT*} shows that
\begin{align}
\label{eq:6.3}
A_k^{c,r}(y):=
\eta_{\le 2\delta k}(2^{-k}\circ y)\int_{\TT^\Gamma}
\ex (y\cdot\theta)\Big|\sum_{g\in\ZZ^\Gamma}K_k^c(g)\ex(- g\cdot\theta )\Big|^{2r}d\theta.
\end{align}
Here we could multiply the last integral by the function $\eta_{\le 2\delta k}(2^{-k}\circ (\cdot))$, since the latter is constantly equal to $1$ on the support of $A_k^{c,r}$; see item \ref{item3}.
Observe that 
\begin{align}\label{eq:6.2}
\sum_{g\in\ZZ^\Gamma}K_k^c(g) \ex( - g\cdot\theta )
=
\int_{\TT^\Gamma} S_k(\xi)\Xi_{k}^c(\xi)\Phi_k(\theta-\xi)\,d\xi,
\end{align}
where $\Phi_k$ is defined in \eqref{eq:6.4}.
By applying Weyl's inequality \eqref{eq:56}, we will show that
\begin{equation}\label{eq:Sk}
|S_k(\xi)| 
\lesssim 
2^{-k\delta\tau/(4 |\Gamma|)}, 
\qquad \xi\in \supp  \Xi_{k}^c, \quad k \ge D.
\end{equation}
Indeed, fix $\xi \in \supp  \Xi_{k}^c$. 
By Dirichlet's principle for any $\gamma\in\Gamma$ there exist integers  $a_\gamma$ and $1\le q_\gamma\le 2^{k(|\gamma|-\delta/(2|\Gamma|))}$ such that $\gcd(a_\gamma, q_\gamma)=1$ and 
\begin{equation*}
|\xi_\gamma-{a_\gamma}/{q_\gamma}|
\le
q_\gamma^{-1} 2^{-k(|\gamma|-\delta/(2|\Gamma|))}
\le
q_\gamma^{-2}.
\end{equation*}  
Notice that if for each $\gamma\in\Gamma$ we had 
$q_\gamma \le 2^{k\delta/(2|\Gamma|)}$ then there would exist $1\le q\le \lcm(q_\gamma:\gamma\in\Gamma)\le 2^{\delta k/2}$ and $(a'_\gamma)_{\gamma\in\Gamma}$ such that $\gcd\big(q, \gcd(a'_\gamma : \gamma \in \Gamma)\big) = 1$ and 
\begin{equation*}
|\xi_\gamma-{a'_\gamma}/{q}|
\le
 2^{-k(|\gamma|-\delta/(2|\Gamma|))}, \qquad \gamma\in\Gamma.
\end{equation*}
This would yield that $\xi\notin\supp\Xi_{k}^c$, which is impossible. This contradiction shows that there exists $\gamma_0\in\Gamma$ for which $2^{k \delta/(2|\Gamma|)} \le q_{\gamma_0} \le 2^{k(|\gamma_0|-\delta/(2|\Gamma|))}$ and 
$|\xi_{\gamma_0}-a_{\gamma_0}/q_{\gamma_0} |\le q_{\gamma_0}^{-2}$. 

Therefore, using \eqref{eq:56} and the fact that $|\Omega_k \cap \ZZ^d| \simeq 2^{kd}$, we obtain 
\eqref{eq:Sk}.
Next, combining \eqref{eq:6.3} with \eqref{eq:6.2}, \eqref{eq:Sk} and  \eqref{eq:FkL1} we have
\begin{align*}
|A_k^{c,r}(y)|
\lesssim
	\eta_{\le 2\delta k}(2^{-k}\circ y)
	2^{-r k \tau \delta/(2|\Gamma|)}, \qquad y \in \ZZ^{\Gamma}.
\end{align*}
Since $r \tau \ge \delta^{-4}$ by item \ref{item2}, this proves
\eqref{eq:6.1} and consequently the proof of Lemma~\ref{lem:minor
arcs} is finished.
\end{proof}

\subsection{Proof of Lemma \ref{lem:major arcs, small scales}}

We use the following variant of the Rademacher--Menshov inequality 
\begin{align}
\label{eq:16}
\sup_{A\le j \le 2^W}|a_j-a_{j_0}|\le \sqrt{2}\sum_{i=0}^W\Big(\sum\limits_{\substack{j=0\\ j2^i\ge A}}^{2^{W-i}-1}|a_{(j+1)2^i}-a_{j2^i}|^2\Big)^{1/2},
\end{align}
with an arbitrary positive constant $A\in\RR_+$ and any
$A\le j_0 \le 2^W$.  This inequality immediately follows from
\cite[Lemma~2.5, p.\ 534]{MSZ2}.  Using this estimate together with a
standard linearization argument (Khintchine's inequality followed by a
dyadic decomposition with respect to the variable $k$) reduces Lemma
\ref{lem:major arcs, small scales} to showing the following bound
\begin{align}\label{eq:7.1}
\Big\| \sum_{k\in[J,2J]} \omega_k (f\ast H_{k,s}) \Big\|_{\ell^2(\ZZ^\Gamma)} 
\lesssim 
2^{-2s/D}\|f\|_{\ell^2(\ZZ^\Gamma)},
\end{align}
where $J\ge\max \{D, s/\delta\}$, $|\omega_k|\le 1$ and $H_{k,s} := K_{k+1,s}-K_{k,s}.$
In fact, we also need to prove that
$\|f\ast K_{\kappa_{s},s} \|_{\ell^2(\ZZ^\Gamma)} \lesssim 2^{-s/D}\|f\|_{\ell^2(\ZZ^\Gamma)}$, $f \in \ell^2(\ZZ^\Gamma)$. 
However, this follows from Lemma~\ref{lem:major arcs, large scales}, whose proof is given in Subsection~\ref{subsec:989} below.

Using \eqref{id:604}, we decompose $H_{k,s} = H_{k,s}^1 + H_{k,s}^2 + H_{k,s}^3$, where
\begin{align*}
H_{k,s}^1(x) &:=\phi_k(x)\int_{\TT^\Gamma} \ex(x\cdot\xi)[S_{k+1}(\xi)-S_k(\xi)]\Xi_{k,s}(\xi)\, d\xi, \\
H_{k,s}^2(x) &:=(\phi_{k+1}(x)-\phi_k(x))\int_{\TT^\Gamma} \ex(x\cdot\xi)S_{k+1}(\xi)\Xi_{k,s}(\xi)\, d\xi, \\
H_{k,s}^3(x) 
& :=
\phi_{k+1}(x)\int_{\TT^\Gamma} \ex(x\cdot\xi)S_{k+1} (\xi)[\Xi_{k+1,s}(\xi)-\Xi_{k,s}(\xi)]\, d\xi.
\end{align*}
To prove \eqref{eq:7.1} it suffices to show that
\begin{equation}\label{eq:8.1}
\|f\ast H_{k,s}^l\|_{\ell^2(\ZZ^\Gamma)}\lesssim 2^{-k/D}\|f\|_{\ell^2(\ZZ^\Gamma)}, \qquad l\in\{2,3\}, \quad k\ge \max\{D, s/\delta\}, \quad s\ge 0,
\end{equation}
and
\begin{equation}\label{eq:8.2}
\Big\| \sum_{k\in[J,2J]} \omega_k (f\ast H_{k,s}^1) \Big\|_{\ell^2(\ZZ^\Gamma)} \lesssim 2^{-2s/D}\|f\|_{\ell^2(\ZZ^\Gamma)}, 
\end{equation}
uniformly in $J\ge \max\{D, s/\delta\}, s\ge 0$ and $|\omega_k|\le 1.$
The proof of inequality \eqref{eq:8.1} for $l\in\{2, 3\}$ will be based on exploiting a high order $TT^*$ argument, as we expect to produce exponential decay in $k$. The proof of inequality \eqref{eq:8.2} will be more delicate as the decay in $k$ is not available, but Cotlar--Knapp--Stein lemma will be critical, see Lemma \ref{lem:1}. We provide the details in the next three subsections.

\subsubsection{\textbf{Step 1: Proof of \eqref{eq:8.1} for $l=2$}} Inequality \eqref{eq:8.1} for $l=2$ will follow if we show that
\begin{align}
\label{eq:7}
\|H_{k,s}^2 \|_{\ell^1(\ZZ^\Gamma)}
\lesssim 
2^{-k}.
\end{align}
\begin{proof}[Proof of inequality \eqref{eq:7}]
Notice that if $x\in \supp\big(\phi_{k+1}-\phi_k\big)$, then there exists $\gamma_0\in\Gamma$
such that
\begin{equation} \label{id:301}
|x_{\gamma_0}|\gtrsim 2^{k(|\gamma_0|+\delta)}.
\end{equation}
We will integrate by parts with respect to variable $x_{\gamma_0}$. Note that for any  fixed $N\in\NN$, we have
\begin{align*}
|\partial_{\gamma_0}^N \Xi_{k,s}(\xi)|\lesssim_{N} 2^{k(|\gamma_0|-\delta)N}, \qquad
|\partial_{\gamma_0}^N S_{k+1}(\xi)|\lesssim_{N} 2^{k|\gamma_0|N},
\qquad \xi\in\TT^\Gamma, \quad k\ge \max\{D, s/\delta\}.
\end{align*}
Therefore,
$|\partial_{\gamma_0}^N \big(S_{k+1}(\xi)\Xi_{k,s}(\xi)\big)| 
\lesssim_{N} 2^{k|\gamma_0|N}$,
and integration by parts, for any  fixed $M\in \NN$, gives 
\begin{align*} 
\Big|\int_{\TT^\Gamma} \ex(x\cdot\xi) S_{k+1}(\xi)\Xi_{k,s}(\xi)\, d\xi\Big|
\lesssim_{M} 
|x_{\gamma_0}|^{-M} \int_{\TT^\Gamma} \big|\partial_{\gamma_0}^M \big(S_{k+1}(\xi)\Xi_{k,s}(\xi)\big) \big| \, d\xi 
\lesssim_{M} 2^{-\delta k M}, 
\end{align*}
provided that \eqref{id:301} is satisfied. 
Letting $M=\lfloor (4|\Gamma|d_{\Gamma}+1)/\delta\rfloor +1$, we obtain
\begin{align*}
\|H_{k,s}^2 \|_{\ell^1(\ZZ^\Gamma)}
\lesssim 
\Big(\prod_{\gamma\in\Gamma} 2^{(k+1)(|\gamma|+\delta)} \Big)2^{-\delta k M}
\lesssim
2^{-k}, 
\end{align*}
which implies \eqref{eq:7}, and the proof of \eqref{eq:8.1} for $l=2$ is completed.
\end{proof}

\subsubsection{\textbf{Step 2: Proof of \eqref{eq:8.1} for $l=3$}}
Let $\mathcal{H}_{k,s}^3 f:=H_{k,s}^3\ast f$. Computing a high power
of $TT^*$ as in the proof of Lemma \ref{lem:minor arcs}, we obtain
\begin{equation*}
\big( (\mathcal{H}_{k,s}^3 )^*\mathcal{H}_{k,s}^3\big)^r f=:f\ast A_{k,s}^{3,r},
\end{equation*} 
where
\begin{equation*}
A_{k,s}^{3,r}(y):=\eta_{\le 2\delta k}(2^{-k}\circ y)\int_{\TT^\Gamma} \ex(y\cdot\theta)\Big|\sum_{g\in\ZZ^\Gamma} H_{k,s}^3(g)\ex(-g\cdot\theta )\Big|^{2r}d\theta.
\end{equation*}
Clearly it suffices to prove
\begin{align}
\label{eq:8}
\|A_{k,s}^{3,r}\|_{\ell^1(\ZZ^\Gamma)}\lesssim 2^{-k}.
\end{align}
\begin{proof}[Proof of inequality \eqref{eq:8}]
Recalling $\Phi_k$ given in \eqref{eq:6.4}, we have
\begin{align*}
\sum_{g\in\ZZ^\Gamma}{H}_{k,s}^3(g) \ex(-g\cdot\theta )=\int_{\TT^\Gamma} S_{k+1}(\xi)(\Xi_{k+1,s}(\xi)-\Xi_{k,s}(\xi)) \Phi_{k+1}(\theta-\xi)\,d\xi.
\end{align*}
Proceeding as in the proof of Lemma~\ref{lem:minor arcs} we see that the proof of \eqref{eq:8}  is reduced  to showing that $|S_{k+1} (\xi)| \lesssim 2^{-k\delta\tau/(4 |\Gamma|)}$, uniformly in 
$\xi\in \supp(\Xi_{k,s} - \Xi_{k+1,s})$ and $k\ge \max\{D, s/\delta\}$. This follows from \eqref{eq:Sk}, since
\[
\supp(\Xi_{k,s} - \Xi_{k+1,s}) \subseteq \supp \Xi_{k+1}^c.
\]
This concludes \eqref{eq:8} and the proof of \eqref{eq:8.1} for $l=3$ is completed.
\end{proof}

\subsubsection{\textbf{Step 3: Proof of \eqref{eq:8.2}}}
To this end again we will compute a high order $TT^*$. Let
\begin{equation*}
\mathcal{H}_{k,s}^1 f:=f\ast H_{k,s}^1.
\end{equation*}

For $k \in \NN$ and $\xi \in \RR^{\Gamma}$ let us introduce the following auxiliary functions
\begin{align} \label{eq:9.1}
	\begin{split}
	J(\xi) &:=
	\frac{1}{|\Omega|}\int_{\Omega} \ex(-(u)^\Gamma\cdot \xi)\, du, \\
	J_k(\xi)
	& := J(2^{k} \circ \xi)
	=\frac{1}{|\Omega|}\int_{\Omega} \ex(-(2^k u)^\Gamma\cdot \xi)\, du
	= \frac{1}{|\Omega_k|}\int_{\Omega_k} \ex(-(u)^\Gamma\cdot \xi)\, du,  \\
	\widetilde{J}(\xi)
	& :=
	J(2 \circ \xi) - J(\xi), \\
	\widetilde{J_k}(\xi)
	& :=  
	\widetilde{J}(2^{k} \circ \xi)
	=
	J_{k+1}(\xi)-J_k(\xi) 
	=
	\int_{\RR^d}\rho(u) \ex\big(-(u)^\Gamma\cdot (2^k\circ \xi)\big)\, du, \\
	\rho(u)
	& :=
	\frac{1}{|2\Omega|}\mathbbm{1}_{2\Omega} (u) 
	-
	\frac{1}{|\Omega|}\mathbbm{1}_\Omega(u) , \qquad u \in \RR^d,\\
	\mathcal{J} (\xi)
		& :=
		|\widetilde{J}(\xi)|^{2r}
		=
		|\widetilde{J_k}(2^{-k}\circ \xi)|^{2r}.
\end{split}
\end{align}
For future reference notice that $\rho\in L^1(\RR^d)$ with $\int_{\RR^d} \rho(u)\,du =0$ and $\supp \rho\subseteq \{u\in \RR^d: |u|\le 2\}$.

We begin with approximating the multiplier $S_k$ on major arcs.
\begin{lemma}\label{fact:2.2}
Let $|2^k\circ\xi|\le 2^{k/4}$, $q\le 2^{k/4}$ and $a \in \ZZ^{\Gamma}$ be such that $\gcd(a,q) = 1$. Then $E_{k}(\xi):=S_k(a/q+\xi)-G(a/q)J_k(\xi)$ satisfies
\begin{equation*}
|E_{k}(\xi)|\lesssim 2^{-k/2},
\end{equation*}
where $G(a/q)$ and $J_k(\xi)$ are defined in \eqref{def:Gauss} and \eqref{eq:9.1}, respectively.
\end{lemma}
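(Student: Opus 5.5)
We prove the approximation by splitting the lattice points $n\in\Omega_k\cap\ZZ^d$ into residue classes modulo $q$ and then replacing sums over each class by integrals. Write $n = qm + r$ with $r\in[q]^d$ and $m\in\ZZ^d$. Since every $\gamma\in\Gamma$ is a nonzero multi-index, we have $(qm+r)^\gamma\equiv r^\gamma \pmod q$. Hence
\[
\ex\big(-(n)^\Gamma\cdot a/q\big)=\ex\big(-(r)^\Gamma\cdot a/q\big),
\]
and therefore
\[
|\Omega_k\cap\ZZ^d|\,S_k(a/q+\xi)=\sum_{r\in[q]^d}\ex\big(-(r)^\Gamma\cdot a/q\big)\sum_{m\in\ZZ^d:\, qm+r\in\Omega_k}\ex\big(-(qm+r)^\Gamma\cdot\xi\big).
\]
Here $[q]^d$ is the natural index set for the residues. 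We read the Gauss sum in \eqref{def:Gauss} as the normalized sum over $r\in[q]^d$, so that the outer sum is $q^d G(a/q)$ once the inner sums are shown to be essentially independent of $r$.

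Next we compare each inner sum with the integral
\[
q^{-d}\int_{\Omega_k}\ex\big(-(u)^\Gamma\cdot\xi\big)\,du .
\]
The standard way is to compare $\sum_m F(qm+r)$ with $q^{-d}\int F$ for $F(u)=\ex(-(u)^\Gamma\cdot\xi)\ind{\Omega_k}(u)$. Each lattice point $qm+r$ carries a cube of side $q$. On such a cube, the oscillation of $u\mapsto (u)^\Gamma\cdot\xi$ is at most
\[
q\,\sup_{|u|\lesssim 2^k}\big|\nabla\big((u)^\Gamma\cdot\xi\big)\big|\lesssim q\sum_\gamma 2^{k(|\gamma|-1)}|\xi_\gamma|\lesssim q\,2^{-k}\,|2^k\circ\xi|\le 2^{-k/2}.
\]
This bound uses both $|2^k\circ\xi|\le 2^{k/4}$ and $q\le 2^{k/4}$. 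The cubes meeting $\partial\Omega_k$ need separate treatment. Since $\Omega_k$ is convex and contained in $B(0,2^k)$, the number of $m$ with $qm+r$ near the boundary is $\lesssim (2^k/q)^{d-1}$. Their total contribution, relative to $(2^k/q)^d$, is therefore $\lesssim q2^{-k}\le 2^{-3k/4}$.

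Putting these together, each inner sum equals
\[
q^{-d}\int_{\Omega_k}\ex\big(-(u)^\Gamma\cdot\xi\big)\,du+O\big((2^k/q)^d\,2^{-k/2}\big).
\]
Summing over $r\in[q]^d$ gives
\[
|\Omega_k\cap\ZZ^d|\,S_k(a/q+\xi)=G(a/q)\,|\Omega_k|\,J_k(\xi)+O\big(2^{kd}2^{-k/2}\big).
\]
Finally we divide by $|\Omega_k\cap\ZZ^d|$ and use the convex-body count
\[
|\Omega_k\cap\ZZ^d|=|\Omega_k|+O(2^{k(d-1)}),
\]
which follows from the same boundary argument with $q=1$ together with $|\Omega_k|\simeq 2^{kd}$ by \eqref{eq:14}. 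This replaces $|\Omega_k|/|\Omega_k\cap\ZZ^d|$ by $1+O(2^{-k})$. Since $|G|\le1$ and $|J_k|\le 1$, we get $|E_k(\xi)|\lesssim 2^{-k/2}$, and the implied constants depend only on $d$, $\Gamma$ and $c_\Omega$.

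The step that needs care is the boundary-cube count for a general convex $\Omega_k$, since $\Omega$ is only assumed convex and open. We would handle it by noting that cubes of side $q$ meeting $\partial\Omega_k$ lie in a $\sqrt d\,q$-neighbourhood of $\partial\Omega_k$. For a convex set of diameter $\lesssim 2^k$, that neighbourhood has volume $\lesssim q\,2^{k(d-1)}$, for instance by Steiner's formula or by comparing $(1+ Cq2^{-k})\Omega_k$ with $(1-Cq2^{-k})\Omega_k$ using $B(0,c_\Omega)\subseteq\Omega$. The remaining steps are routine bookkeeping.
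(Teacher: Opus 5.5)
Your proof is correct and follows essentially the same route as the paper. The paper cites \cite[Propositions~4.16 and~4.18]{advances} for the lattice-point count $|\Omega_k\cap\ZZ^d|=|\Omega_k|+O(2^{k(d-1)})$ and for the error bound $q/2^k+q\sum_{\gamma\in\Gamma}|\xi_\gamma|2^{k(|\gamma|-1)}+2^{-k}$, which are exactly the boundary, oscillation and normalization errors you derive by hand via residue classes modulo $q$ (and your reading of \eqref{def:Gauss} as a normalized sum over $r\in[q]^d$ is the intended one).
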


\begin{proof}
Using \cite[Proposition~4.16]{advances} we see that
$|\Omega_k \cap \ZZ^d| = |\Omega_k| + O(2^{k(d-1)})$ and consequently $|\Omega_k \cap \ZZ^d| \simeq |\Omega_k|$.  Therefore an
application of \cite[Proposition~4.18]{advances}, (with
$N=2^k, \Omega=\Omega_k$, $\mathcal{K} \equiv 1$ and $\epsilon_\gamma=1$ for each
$\gamma\in\Gamma$), gives
\begin{align*}
|E_{k}(\xi)|
&=
|\Omega_k|^{-1}\Big|\sum_{n\in\Omega_k \cap \ZZ^d}
\ex(-(n)^\Gamma\cdot \xi)-G(a/q)\int_{\Omega_k}\ex(-(x)^\Gamma\cdot \xi)\,dx\Big|
+ O(2^{-k})
\\
&\lesssim 
q/2^k +q\sum_{\gamma\in\Gamma}|\xi_\gamma| 2^{k(|\gamma|-1)} + 2^{-k},
\end{align*}
and the last expression is dominated by $2^{-k/2}$ and the proof follows.
\end{proof}

\begin{lemma}\label{lem:2.1}
Let $s\ge 0$ and $k\ge \max\{D, s/\delta\}$. Then 
\begin{align*}
\big((\mathcal{H}_{k,s}^1)^*\mathcal{H}_{k,s}^1\big)^r f= f\ast (B_{k,s}^r+E_{k,s}^r), 
\end{align*}
where $\|E_{k,s}^r\|_{\ell^1(\ZZ^\Gamma)}\lesssim 2^{-k/8}$ is an error term and
\begin{align*}
B_{k,s}^r(x)
&:=
\Big(\sum_{a/q\in R_s^\Gamma \cap [0,1)^{\Gamma}} \ex(x\cdot a/q) |G(a/q)|^{2r} \Big) \\
& \qquad \qquad \qquad \times
\eta_{\le 2\delta k}(2^{-k}\circ x)
\Big(\prod_{\gamma\in\Gamma}2^{-k|\gamma|}\Big)\int_{\RR^\Gamma}\eta_{\le \delta k}(\theta) \mathcal{J} (\theta)\ex((2^{-k}\circ x)\cdot \theta)\, d\theta,
\end{align*}
with  $G(a/q)$ and $\mathcal{J}$ defined in \eqref{def:Gauss} and \eqref{eq:9.1}, respectively.
\end{lemma}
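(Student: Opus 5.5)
Following the general high-order $TT^*$ computation of Section~\ref{subsec:TT*}, I would first write the kernel of $\big((\mathcal{H}_{k,s}^1)^*\mathcal{H}_{k,s}^1\big)^r$ as
\[
\eta_{\le 2\delta k}(2^{-k}\circ y)\int_{\TT^\Gamma}\ex(y\cdot\theta)\,|\widehat{H^1_{k,s}}(\theta)|^{2r}\,d\theta .
\]
The cutoff may be inserted because $\supp H^1_{k,s}\subseteq\supp\phi_k$ and the $2r$-fold sum of such supports lies where $\eta_{\le 2\delta k}(2^{-k}\circ\cdot)=1$. As in \eqref{eq:6.2},
\[
\widehat{H^1_{k,s}}(\theta)=\int_{\TT^\Gamma}[S_{k+1}(\xi)-S_k(\xi)]\,\Xi_{k,s}(\xi)\,\Phi_k(\theta-\xi)\,d\xi .
\]

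The first step is to replace this by a clean major-arc model. On the support of the bump around $a/q\in R^\Gamma_s$ we have $q\le 2^{s+1}\le 2^{\delta k+1}\le 2^{k/4}$ and $|2^k\circ(\xi-a/q)|\lesssim 2^{\delta k}\le 2^{k/4}$. Hence Lemma~\ref{fact:2.2}, applied at scales $k$ and $k+1$, gives
\[
S_{k+1}(\xi)-S_k(\xi)=G(a/q)\,\widetilde{J_k}(\xi-a/q)+O(2^{-k/2}).
\]
The bumps $\eta_{\le\delta k}(2^k\circ(\xi-a/q))$ are disjoint for distinct fractions, since $|a/q-a'/q'|\ge 2^{-2s-2}$ while the bump widths are at most $2^{-k(1-\delta)}$. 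So I would define
\[
m(\theta):=\sum_{a/q}G(a/q)\int\widetilde{J_k}(\xi-a/q)\,\eta_{\le\delta k}\big(2^k\circ(\xi-a/q)\big)\,\Phi_k(\theta-\xi)\,d\xi .
\]
By \eqref{eq:FkL1}, $|\widehat{H^1_{k,s}}-m|\lesssim 2^{-k/2}$ uniformly in $\theta$.

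Next, $\Phi_k$ is concentrated at scale $2^{-k(|\gamma|+\delta)}$ by \eqref{eq:6.5}, which is much finer than the bump scale $2^{-k|\gamma|+\delta k}$. Convolving with $\Phi_k$, which has total mass $\Phi_k$-integral $\phi_k(0)=1$, therefore reproduces the smooth function $\widetilde{J_k}\,\eta_{\le\delta k}(2^k\circ\cdot)$ up to an error. I would estimate this error by a Taylor expansion: the derivatives of $\widetilde{J_k}(\cdot)$ in $\xi_\gamma$ are $O(2^{k|\gamma|})$, and the moments of $\Phi_k$ gain $2^{-k(|\gamma|+\delta)}$, so the error is $O(2^{-\delta k})$ and also decays away from the arcs. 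This is the delicate point, because I need the error to be power-saving, say $2^{-k/8}$, and not merely $2^{-\delta k}$. I expect the main obstacle to be the sharp transitions in the cutoffs, so I would instead compare kernels in physical space. Multiplying by $\phi_k(x)$ on the physical side only truncates the inverse transform of $\widetilde{J_k}\eta(\cdot)$. That inverse transform is essentially supported in $|x_\gamma|\lesssim 2^{k|\gamma|}$, with Schwartz tails beyond, by integration by parts as in Step 1. Hence $\phi_k$ changes it by $O(2^{-Mk\delta})$ in $\ell^1$, and this is $\le 2^{-k}$ after choosing $M\sim 1/\delta$.

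Granting this, $\widehat{H^1_{k,s}}(\theta)=\sum_{a/q}G(a/q)\widetilde{J_k}(\theta-a/q)\eta_{\le\delta k}(2^k\circ(\theta-a/q))+\mathcal{E}(\theta)$. Here $\mathcal{E}$ is the Fourier transform of a kernel with $\ell^1$ norm, and hence sup norm, $\lesssim 2^{-k/2}$, and the main term is bounded by $1$. Expanding the $2r$-th power, disjointness gives
\[
|\text{main}|^{2r}=\sum_{a/q}|G(a/q)|^{2r}\,|\widetilde{J_k}|^{2r}\,\eta_{\le\delta k}^{2r}(\cdots),
\]
and the cross terms cost $O(r2^{-k/2})$ pointwise. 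Inserting this into the kernel formula, substituting $\theta=a/q+2^{-k}\circ\theta'$ (Jacobian $\prod_\gamma 2^{-k|\gamma|}$), and using $|\widetilde{J_k}(2^{-k}\circ\theta')|^{2r}=\mathcal{J}(\theta')$ yields $B^r_{k,s}$. The replacement of $\eta^{2r}$ by $\eta$ is absorbed into the error; alternatively I would choose the cutoff convention so that the factor is exactly $\eta_{\le\delta k}$.

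Finally, I would bound the $\ell^1$ norm of the error kernel. The kernel is supported on a box of volume $\prod_\gamma 2^{k(|\gamma|+2\delta+1)}\lesssim 2^{k(|\Gamma|d_\Gamma+3|\Gamma|\delta)}$, while the pointwise $\theta$-integral bound alone gives only $2^{-k/2}$, which is insufficient. So, as in Lemma~\ref{lem:minor arcs}, I would use $\ell^1\le\ell^2\cdot|\mathrm{supp}|^{1/2}$ together with Plancherel for the $\theta$-integrand. The $L^2(\TT^\Gamma)$ norm of (difference of $2r$-th powers) is at most $2^{-k/2}$ times the $L^2$ norm of a product of $2r-1$ factors each bounded by $1$. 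Because the main terms live on arcs of total measure about $2^{2s}\prod_\gamma 2^{-k|\gamma|+\delta k}$, this gives the gain $2^{-k|\Gamma|d_\Gamma/2}$ roughly matching the support. I expect this measure-counting step, keeping the $\delta$-losses below $2^{3k/8}$, to be where the care is needed, and the resulting total bound is $2^{-k/8}$.
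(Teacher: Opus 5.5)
Your outline uses the same ingredients as the paper: Lemma~\ref{fact:2.2} on each arc, removal of $\phi_k$ via the decay of $\widehat{\eta_{\le\delta k}}$, Fourier inversion/Poisson summation to recover the periodized main term $m_0(\theta)=\sum_{a/q}G(a/q)\widetilde{J_k}(\theta-a/q)\eta_{\le\delta k}(2^k\circ(\theta-a/q))$, disjointness of the arcs, and $\eta^{2r}\to\eta$ via \eqref{eq:13.1}. However, you apply them in a different order, and that opens a gap in your last step.

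The paper first restricts the $\theta$-integral to $2^{2\delta k}$-neighbourhoods of the fractions, using the decay \eqref{eq:6.5} of $\Phi_k$. This produces $H^{r,1}_{k,s}$ with an $O(2^{-k})$ error in $\ell^1$, and only afterwards does the paper approximate $I_{a/q}(\theta)$. Its final error integrand is therefore $O(2^{-k/4})$ on a set of measure $\lesssim 2^{(|\Gamma|+1)(s+1)}\prod_\gamma 2^{-k|\gamma|+2\delta k}$, which cancels the physical support $\lesssim\prod_\gamma 2^{k|\gamma|+2\delta k}$ up to $2^{C\delta k}$.

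You instead write $\widehat{H^1_{k,s}}=m_0+\mathcal E$ globally, with only a sup bound $\|\mathcal E\|_{L^\infty}\lesssim 2^{-k/2}$. You then assert that $|m_0+\mathcal E|^{2r}-|m_0|^{2r}$ lives on the arcs ``because the main terms live on arcs''. As stated, this is not justified. The expansion contains $|\mathcal E|^{2r}$; equivalently, in a telescoping, it contains the term in which all remaining $2r-1$ factors are $\widehat{H^1_{k,s}}$ rather than $m_0$. Nothing you proved localizes $\mathcal E$, so for that term you are back to the sup-on-all-of-$\TT^\Gamma$ estimate you yourself rejected as insufficient.

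The repair is short. Pointwise,
$\big||m_0+\mathcal E|^{2r}-|m_0|^{2r}\big|\lesssim_r |\mathcal E|\,|m_0|^{2r-1}+|\mathcal E|^{2r}$.
\begin{itemize}
\item The first term is $\lesssim 2^{-k/2}$ and supported on the arcs. Its kernel contribution therefore has $\ell^1$-norm $\lesssim 2^{-k/2+C\delta k}$ by sup $\times$ arc measure $\times$ support, with no need for Plancherel.
\item The second term is $\lesssim_r 2^{-kr}$ everywhere. By item~\ref{item2}, $r\ge\delta^{-4}/\tau\ge\delta^{-4}d_\Gamma\gg\sum_\gamma|\gamma|$, so $2^{-kr}\prod_\gamma 2^{k|\gamma|+2\delta k}\le 2^{-k}$.
\end{itemize}
Alternatively, localize $\widehat{H^1_{k,s}}$ off the enlarged arcs via \eqref{eq:6.5} at the start, as the paper does.

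The bookkeeping must also keep exact products. The cancellation is between $\prod_\gamma 2^{-k|\gamma|}$ and $\prod_\gamma 2^{k|\gamma|}$. Bounding the support by $2^{k|\Gamma|d_\Gamma}$ and claiming a gain $2^{-k|\Gamma|d_\Gamma/2}$ is wrong when the $|\gamma|$ differ: the mismatch is $2^{k(|\Gamma|d_\Gamma-\sum_\gamma|\gamma|)/2}$. Similarly, the number of fractions is $\lesssim 2^{(|\Gamma|+1)s}$, not $2^{2s}$ (harmless since $s\le\delta k$), and $|\widetilde{J_k}|\le 2$, not $1$. Finally, you can drop the Taylor-expansion detour: your physical-space removal of $\phi_k$ followed by Fourier inversion is exactly the paper's \eqref{id:601}--\eqref{id:602}.
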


Before we prove Lemma~\ref{lem:2.1}, observe that
 for any fixed $\alpha \in \NN^{\Gamma}$ we have the estimate
\begin{equation}\label{eq:13.1}
	\big|D^\alpha \mathcal{J}(\theta)\big|\lesssim_\alpha (1+|\theta|)^{-1/\delta^2}, 
	\qquad \theta \in \RR^{\Gamma}.
\end{equation}
Inequality \eqref{eq:13.1} follows from  Lemma~\ref{lem:A2} combined with \eqref{eq:tau} and the relation from item \ref{item2}.

\begin{proof}[Proof of Lemma~\ref{lem:2.1}]
Let $\big((\mathcal{H}_{k,s}^1)^*\mathcal{H}_{k,s}^1\big)^r f= f\ast H_{k,s}^r$.
By the formula derived in Section \ref{subsec:TT*}, we obtain
\begin{align} \label{idd::A2}
H_{k,s}^r(x):=\eta_{\le 2\delta k}(2^{-k}\circ x)\int_{\TT^\Gamma} \ex(x\cdot \theta) \Big|\sum_{g\in\ZZ^\Gamma} H_{k,s}^1(g) \ex(-g\cdot\theta) \Big|^{2r}\, d\theta.
\end{align}
Recalling $\Phi_k$ defined in \eqref{eq:6.4}, and setting
$\widetilde{S_k} :=S_{k+1}-S_k$, we observe that
\begin{equation*}
\sum_{g\in\ZZ^\Gamma} H_{k,s}^1(g) \ex(-g\cdot\theta)
=
\int_{\TT^\Gamma} \Phi_k(\theta-\xi) \widetilde{S_k}(\xi)\Xi_{k,s}(\xi)\,d\xi.
\end{equation*}

If $\xi\in\supp\Xi_{k,s}$, then there exists a fraction
$a/q\in R_s^\Gamma$, which lies close to $\xi$.  In view of
\eqref{eq:6.5} we see that $\Phi_k(\theta-\xi)$ decays rapidly if
$|\theta-\xi|$ is separated from zero. Combining these two
observations, the integration in $d\theta$ in \eqref{idd::A2} can be
localized to small neighborhoods of fractions from $R_s^\Gamma$.

More precisely, we can write
\begin{equation*}
H_{k,s}^r(x)=H_{k,s}^{r,1}(x)+E_{k,s}^{r,1}(x),
\end{equation*}
with $ \|E_{k,s}^{r,1}\|_{\ell^1(\ZZ^\Gamma)}\lesssim 2^{-k}$
and 
\begin{align}
H_{k,s}^{r,1}(x)\nonumber
&:=
\eta_{\le 2\delta k}(2^{-k}\circ x)
\int_{\TT^\Gamma} \ex(x\cdot \theta) \sum_{a/q\in R_s^\Gamma} \eta_{\le 2\delta k} \big(2^k\circ (\theta-a/q)\big) \Big|\sum_{g\in\ZZ^\Gamma} H_{k,s}^1(g) \ex(-g\cdot\theta)\Big|^{2r}\, d\theta
\\
&=
\eta_{\le 2\delta k}(2^{-k}\circ x) \sum_{a/q\in R_s^\Gamma \cap [0,1)^{\Gamma} } \ex(x\cdot a/q)
\int_{\RR^\Gamma} \ex(x\cdot \theta)\eta_{\le 2\delta k} (2^k\circ \theta) |I_{a/q}(\theta)|^{2r}\, d\theta,\label{eq:*}
\end{align}
where for $a/q \in R_s^\Gamma \cap [0,1)^{\Gamma}$ and $\theta \in \RR^\Gamma$ we set
$$
I_{a/q}(\theta)
:=
\sum_{g\in\ZZ^\Gamma} H_{k,s}^1(g) \ex\big(-g\cdot(\theta+a/q)\big).
$$
Next, for fixed $a/q$ and $|2^k\circ \theta|\lesssim 2^{2\delta k}$ we will approximate $I_{a/q}(\theta)$. 
Using Lemma~\ref{fact:2.2},  we obtain
\begin{align*} 
I_{a/q}(\theta)
&=
\sum_{g\in\ZZ^\Gamma} \phi_k(g) \ex(-g\cdot (\theta+a/q)) \int_{\TT^\Gamma} \ex(g\cdot\xi)\widetilde{S_k}(\xi)\Xi_{k,s}(\xi)\, d\xi
\\
&=\sum_{g\in\ZZ^\Gamma}  \phi_k(g) \ex\big(-g\cdot (\theta+a/q)\big)
\sum_{b/q' \in R_s^\Gamma \cap [0,1)^{\Gamma}} \ex(g\cdot b/q') 
\int_{\RR^\Gamma} \ex(g\cdot \xi) \widetilde{S_k}(\xi+b/q') \eta_{\le \delta k} (2^k\circ \xi) \, d\xi \\
&=
\sum_{g\in\ZZ^\Gamma}  \phi_k(g) 
\sum_{b/q' \in R_s^\Gamma \cap [0,1)^{\Gamma}} \ex\big(-g\cdot (\theta+a/q - b/q')\big) G(b/q')
\int_{\RR^\Gamma} \ex(g\cdot \xi) \widetilde{J_k}(\xi) \eta_{\le \delta k} (2^k\circ \xi) \, d\xi \\
& \quad
+ O(2^{-k/4}). 
\end{align*}
Changing the variable $\xi\mapsto 2^{-k}\circ \xi$, we have
\begin{align*}
\int_{\RR^\Gamma} \ex(g\cdot\xi)\widetilde{J_k}(\xi)\eta_{\le \delta k}(2^k\circ \xi)\, d\xi
=
\Big(\prod_{\gamma\in\Gamma} 2^{-k|\gamma|}\Big) \int_{\R^d}\rho(x) \widehat{\eta_{\le \delta k}}((x)^\Gamma-2^{-k}\circ g)\,dx,
\end{align*}
where $\widehat{\eta_{\le \delta k}}:=\mathcal F_{\RR^\Gamma}\eta_{\le \delta k}$.
Since for any fixed $M \in \NN$ we have 
\begin{equation*}
	\big| \widehat{\eta_{\le \delta k}} \big((x)^\Gamma-2^{-k}\circ g \big) \big|
	\lesssim_{M} 
	2^{-Mk} \Big(1 + 2^{\delta k} \big| (x)^\Gamma-2^{-k}\circ g \big| \Big)^{-M},
	\qquad |x| \le 2, \quad |2^{-k}\circ g| \ge 2^{\delta k},
\end{equation*}
we can replace $\phi_k(g)$ with $1$ in the formula approximating $I_{a/q}(\theta)$. Therefore, we conclude
\begin{align} \label{id:601}
	\begin{split}
	I_{a/q}(\theta)
& = 
O(2^{-k/4}) 
+
	\sum_{b/q' \in R_s^\Gamma \cap [0,1)^{\Gamma}} G(b/q')
	\Big(\prod_{\gamma\in\Gamma} 2^{-k|\gamma|}\Big) \\
& \qquad \qquad \qquad \qquad \times	
	\sum_{g\in\ZZ^\Gamma} 
	\ex\big(-g\cdot (\theta+a/q - b/q')\big) 
	\int_{\R^d}\rho(x) \widehat{\eta_{\le \delta k}}((x)^\Gamma-2^{-k}\circ g)\,dx. 
\end{split}
\end{align}
Applying the Poisson summation formula to the sum over $g\in\ZZ^\Gamma$ above, we may write
\begin{align} \label{id:602}
		\begin{split}
& \Big(\prod_{\gamma\in\Gamma} 2^{-k|\gamma|}\Big)	\sum_{g\in\ZZ^\Gamma} 
\ex\big(-g\cdot (\theta+a/q - b/q')\big)
\widehat{\eta_{\le \delta k}} ((x)^\Gamma-2^{-k}\circ g) \\
& =
\sum_{u \in\ZZ^\Gamma} \ex \big[-(x)^\Gamma\cdot \big(2^k\circ (\theta + a/q - b/q' + u) \big) \big]
\eta_{\le \delta k} \big(2^k\circ(\theta+a/q-b/q' + u) \big) \\
& =
\ind{a/q = b/q'} \ex \big(-(x)^\Gamma\cdot (2^k\circ \theta ) \big)
\eta_{\le \delta k} (2^k\circ \theta ).
\end{split}
\end{align}
In the last identity we used the fact that $|2^k\circ \theta|\lesssim 2^{2\delta k}$ and $|2^k\circ(\theta+a/q-b/q' + u)| \lesssim 2^{\delta k}$ 
force $|2^k\circ(a/q-b/q' + u)| \lesssim 2^{2\delta k}$. Since $a/q,b/q' \in R_s^\Gamma \cap [0,1)^{\Gamma}$, the latter is true only if $u=0$ and $a/q = b/q'$.

Combining \eqref{id:601} with \eqref{id:602}, we obtain
\begin{align*}
	I_{a/q}(\theta)
=
O(2^{-k/4})
+
G(a/q)\eta_{\le \delta k}(2^{k}\circ \theta)\widetilde{J_k}(\theta),
\end{align*}
and consequently 
\begin{align*}
|I_{a/q}(\theta)|^{2r}
	=
	O(2^{-k/4})
	+
	|G(a/q) \eta_{\le \delta k}(2^{k}\circ \theta)\widetilde{J_k}(\theta)|^{2r}.
\end{align*}
Using this with \eqref{eq:*}, the identity
$\eta_{\le 2\delta k}\eta_{\le\delta k}=\eta_{\le \delta k}$ and  \eqref{eq:9.1}
we see that up to an acceptible error $H_{k,s}^{r,1}$ is equal to
\begin{align*}
	H_{k,s}^{r,2}(x)
	=
\eta_{\le 2\delta k}(2^{-k}\circ x) \sum_{a/q\in R_s^\Gamma \cap [0,1)^{\Gamma} } \ex(x\cdot a/q) |G(a/q)|^{2r}
\int_{\RR^\Gamma} \ex(x\cdot \theta) | \eta_{\le \delta k}(2^{k}\circ \theta)\widetilde{J_k}(\theta)|^{2r}\, d\theta.
\end{align*}
Finally, using \eqref{eq:13.1} we may replace 
$\big(\eta_{\le \delta k}(2^k\circ \theta)\big)^{2r}$ with $\eta_{\le \delta k}(2^k\circ \theta)$. Then the change of variables $2^k\circ \theta\mapsto \theta$ concludes the proof of Lemma~\ref{lem:2.1}.
\end{proof}

We return to the proof of \eqref{eq:8.2}. By the Cotlar--Knapp--Stein lemma it suffices to show the following lemma.

\begin{lemma}\label{lem:2.3}
Let $k,j\ge \max\{D, s/\delta\}$ and $j\in[k/2,k]$. Then
\begin{equation*}
\|\mathcal{H}_{j,s}^1(\mathcal{H}_{k,s}^1)^*\|_{\ell^2(\Z^\Gamma)\rightarrow \ell^2(\Z^\Gamma)}\lesssim 2^{-4s/D}2^{-|k-j|/D}.
\end{equation*}
\end{lemma}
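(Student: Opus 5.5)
Because the operators are convolutions, I would not attack $\mathcal{H}_{j,s}^1(\mathcal{H}_{k,s}^1)^*$ directly. Instead I would run the high-order $TT^*$ argument on $T:=\mathcal{H}_{j,s}^1(\mathcal{H}_{k,s}^1)^*$ itself. The identity
\[
\|T\|_{\ell^2\to\ell^2}^{2r}=\|(T^*T)^r\|_{\ell^2\to\ell^2}
\]
holds, and since all operators here are convolutions they commute, so
\[
(T^*T)^r=\big((\mathcal{H}_{j,s}^1)^*\mathcal{H}_{j,s}^1\big)^r\big((\mathcal{H}_{k,s}^1)^*\mathcal{H}_{k,s}^1\big)^r .
\]
By Lemma~\ref{lem:2.1} this operator is convolution with $(B_{j,s}^r+E_{j,s}^r)\ast(B_{k,s}^r+E_{k,s}^r)$. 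The error terms are harmless. Both $\|B^r_{k,s}\|_{\ell^2\to\ell^2}$ and $\|E^r_{k,s}\|_{\ell^1}$ are $O(1)$, and any product containing an $E$ factor contributes at most $2^{-j/8}\lesssim 2^{-k/16}$. Because $j,k\ge s/\delta$ and $|k-j|\le k$, this is far smaller than $(2^{-4s/D}2^{-|k-j|/D})^{2r}$ once we use $D>r\delta^{-4}$. So it suffices to prove
\[
\|B_{j,s}^r\ast B_{k,s}^r\|_{\ell^1}\lesssim 2^{-8rs/D}2^{-2r|k-j|/D},
\]
or the same bound for the $\ell^2$ operator norm.

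To analyse $B_{j,s}^r\ast B_{k,s}^r$ I would pass to the Fourier side, which is legitimate here because everything is explicit. Up to errors of size $2^{-k/8}$ in $\ell^1$ (produced by the cutoffs $\eta_{\le 2\delta k}(2^{-k}\circ x)$, which are removed using the decay \eqref{eq:13.1}, and by applying Poisson summation as in \eqref{id:602}), the multiplier of $B_{k,s}^r$ is
\[
\sum_{a/q\in R_s^\Gamma}|G(a/q)|^{2r}\,\eta_{\le\delta k}\big(2^k\circ(\xi-a/q)\big)\,|\widetilde J(2^k\circ(\xi-a/q))|^{2r}.
\]
The bumps around distinct fractions in $R_s^\Gamma$ are disjoint, since $j,k\ge s/\delta$. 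The multiplier of the product is therefore a sum over $a/q$ of
\[
|G(a/q)|^{4r}\,\mathcal J\big(2^j\circ\theta\big)\,\mathcal J\big(2^k\circ\theta\big),\qquad \theta=\xi-a/q .
\]
I would estimate its sup norm, which equals the $\ell^2$ operator norm of the main term. The sup norm is enough for the $2r$-th root argument; $\ell^1$ is not needed. The arithmetic factor gives $|G(a/q)|^{4r}\lesssim 2^{-4rs\tau}$ by Lemma~\ref{lem:A4}. By item \ref{item2} this is $\le 2^{-8rs/D}$ with plenty of room.

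The scale decay is the main obstacle. It rests on two bounds for $\widetilde J$:
\[
|\widetilde J(\zeta)|\lesssim |\zeta|\ \ \text{for small } \zeta, \qquad |\widetilde J(\zeta)|\lesssim |\zeta|^{-\tau}\ \ \text{for large } \zeta .
\]
The first holds because $\int\rho=0$, by a Taylor expansion. The second is Lemma~\ref{lem:A2} applied to each of the two pieces of $\rho$. Write $\zeta=2^k\circ\theta$, so that $2^j\circ\theta=2^{-(k-j)}\circ\zeta$. If $|\zeta|\ge 2^{(k-j)/2}$, the factor $\mathcal J(\zeta)$ is $\lesssim 2^{-r\tau(k-j)}$. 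Otherwise every coordinate satisfies $|2^{-(k-j)|\gamma|}\zeta_\gamma|\le 2^{-(k-j)/2}$, so the vanishing-mean bound gives $\mathcal J(2^j\circ\theta)\lesssim 2^{-r(k-j)}$. In either case the main term is bounded by $2^{-4rs\tau}2^{-r\tau(k-j)}$.

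Taking the $2r$-th root then gives $\|T\|\lesssim 2^{-2s\tau}2^{-\tau|k-j|/2}+2^{-k/(32r)}$. Using $1/D<\tau/4$ and $k\ge\max\{s/\delta,|k-j|\}$, both terms are $\lesssim 2^{-4s/D}2^{-|k-j|/D}$. This proves Lemma~\ref{lem:2.3}.
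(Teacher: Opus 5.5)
Your argument is correct, but it takes a genuinely different route from the paper.

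\textbf{The paper's route.} The paper never passes to the Fourier side.
\begin{itemize}
\item For $j=k$ it bounds $\|B^r_{k,s}\|_{\ell^1}\lesssim 2^{-s}$ by summing $|G(a/q)|^{2r}$ over all of $R^\Gamma_s\cap[0,1)^\Gamma$.
\item For $j<k$ it keeps one unsmoothed factor. Using $\|\mathcal H^1_{j,s}\|\lesssim 1$, it reduces to $\|\mathcal H^1_{j,s}[(\mathcal H^1_{k,s})^*\mathcal H^1_{k,s}]^r\|$. This reduction needs no commutativity; that is why $r$ is taken dyadic.
\item It then bounds $\|B^r_{k,s}\ast H^1_{j,s}\|_{\ell^1}$ entirely in physical space. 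It approximates $H^1_{j,s}$ by $\sum G(b/q')Y_{j,b/q'}$ and splits into residue classes modulo $Q=qq'$, which removes the characters without destroying cancellation.
\item It plays the smoothness of $X^r_k$ at scale $2^k$ (giving $I_1\lesssim 2^{-(k-j)}$) against the mean-zero property of $Y_j$ along residue classes (giving $I_2\lesssim 2^{-\delta j}$).
\item Cross terms with $a/q\neq b/q'$ are never shown to vanish. They are only absorbed by the factor $|G(a/q)|^{2r}$.
\end{itemize}

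\textbf{Your route.} You use commutativity of convolutions to write $(T^*T)^r$ as a product of two smoothed operators. You then use Plancherel to replace the operator norm by the sup of the multiplier. Exact disjointness of the major-arc bumps (valid since $j,k\ge s/\delta$) kills the cross terms. So you need only a pointwise bound $|G|^{4r}\lesssim 2^{-4rs\tau}$, with no summation over fractions. The scale decay comes from $\widetilde J(0)=0$ played against van der Corput.

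\textbf{What each buys.} Your argument is shorter and treats $j=k$ and $j\neq k$ uniformly. It also gives the cleaner decay $2^{-r\tau|k-j|}$ for the $2r$-th power. What it gives up is exactly what the paper is trying to illustrate. Plancherel (operator norm equals multiplier sup) and commutativity of the pieces are the two tools unavailable in the nilpotent setting motivating the paper. The paper's $\ell^1$ kernel estimates (residue-class splitting, smoothness versus cancellation) are designed to transfer there. As a proof of the stated lemma on $\ZZ^\Gamma$, your version is perfectly fine.

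\textbf{One gap to fill.} You quote $\|B^r_{k,s}\|_{\ell^2\to\ell^2}=O(1)$ without proof. It follows immediately from your own multiplier formula (disjoint bumps, $|G|\le 1$, $\mathcal J\le 1$, plus the cutoff-removal error), or from the paper's bound $\|B^r_{k,s}\|_{\ell^1}\lesssim 2^{-s}$. It should be stated explicitly, since the error products $B\ast E$ rely on it.
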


\begin{proof}
We will consider two cases.

\paragraph{\textbf{Special case $j=k$}}
Using Lemma \ref{lem:2.1}, it suffices to show $\|B_{k,s}^r\|_{\ell^1(\ZZ^\Gamma)}\lesssim 2^{-s}$.
We have 
\begin{align*}
B_{k,s}^r(x)
=
\sum_{a/q\in R_s^\Gamma \cap [0,1)^{\Gamma}} |G(a/q)|^{2r} X_{k,a/q}^r(x), 
\end{align*}
where
\begin{align*}
X_{k,a/q}^r(x) & := \ex(x\cdot a/q) X_k^r(x), \\
X_k^r(x)
& :=
\eta_{\le 2\delta k}(2^{-k}\circ x)\Big(\prod_{\gamma\in\Gamma} 2^{-k|\gamma|}\Big)
\int_{\RR^\Gamma} \eta_{\le \delta k}(\theta)
\mathcal{J} (\theta) \ex\big((2^{-k}\circ x)\cdot \theta\big)\,d\theta.
\end{align*}
Using \eqref{eq:13.1} and integrating by parts, for a fixed $M \in \NN$, we have
\begin{align}\label{eq:16.3}
|X_k^r(x)|+\sum_{\gamma\in\Gamma}2^{k|\gamma|} |\partial_{x_\gamma} X_k^r(x)|
\lesssim_{M} 
\Big(\prod_{\gamma\in\Gamma} 2^{-k|\gamma|} \Big)(1+|2^{-k}\circ x|)^{-M}.
\end{align}
The above estimate,  Lemma \ref{lem:A4}, and items \ref{item1} and \ref{item2} lead to
\begin{equation*}
\|B_{k,s}^r\|_{\ell^1(\ZZ^\Gamma)}
\le 
\|X_k^r\|_{\ell^1(\ZZ^\Gamma)}\sum_{a/q\in R_s^\Gamma \cap [0,1)^{\Gamma}}|G(a/q)|^{2r}\lesssim 2^{-2r\tau s}
|R_s^\Gamma \cap [0,1)^{\Gamma} |\le 2^{-s}.
\end{equation*}

\paragraph{\textbf{General case $j\in[k/2,k]$} }
From the previous case, in particular, we know that 
$$
\|\mathcal{H}_{j,s}^1\|_{\ell^2(\Z^\Gamma)\rightarrow \ell^2(\Z^\Gamma)}\lesssim 1,
$$ 
so we can reduce our task to showing that
\begin{equation*}
\|\mathcal{H}_{j,s}^1[(\mathcal{H}_{k, s}^1)^*\mathcal{H}_{k,s}^1]^r\|_{\ell^2(\Z^\Gamma)\rightarrow \ell^2(\Z^\Gamma)}\lesssim 2^{-s}2^{-\delta(k-j)/2},
\end{equation*}
uniformly in $k,j\ge \max\{D, s/\delta\}$ and $j\in[k/2,k]$.

We first use Lemma \ref{lem:2.1}. The contribution from the error term can be easily treated as follows
\begin{align*}
\|  E_{k,s}^r \ast H_{j,s}^1 \|_{\ell^1(\ZZ^\Gamma)} 
&\le 
\|E_{k,s}^r\|_{\ell^1(\ZZ^\Gamma)} \|H_{j,s}^1\|_{\ell^1(\ZZ^\Gamma)}
\lesssim 
2^{-k/8} \|\phi_j\|_{\ell^1(\ZZ^\Gamma)}|\supp \Xi_{j,s}|\lesssim 2^{-k/8} 2^{4k|\Gamma|\delta}\le 2^{-k/16}.
\end{align*}
Therefore, it suffices to prove that
\begin{equation}\label{eq:14.1}
\|B_{k,s}^r\ast H_{j,s}^1\|_{\ell^1(\ZZ^\Gamma)}\lesssim  2^{-s}2^{-\delta(k-j)/2}.
\end{equation}
Using Lemma \ref{fact:2.2}, we see that
\begin{equation*}
H_{j,s}^1(x)=\sum_{b/q' \in R_s^\Gamma \cap [0,1)^{\Gamma}} G(b/q') Y_{j,b/q'}(x)+E_{j,s}(x),
\end{equation*}
where $\|E_{j,s}\|_{\ell^1(\ZZ^\Gamma)}\lesssim 2^{-j/4}$ and 
\begin{align*}
Y_{j,b/q'}(x)
& :=
\ex(x\cdot b/q') Y_j(x), \\ 
Y_j(x)
& :=
\phi_j(x)\int_{\RR^\Gamma}\ex(x\cdot \xi) \widetilde{J_j}(\xi)\eta_{\le\delta j}(2^j\circ \xi)\,d\xi.
\end{align*}
Using Lemma~\ref{lem:A4}, we see that 
\begin{align*}
\|B_{k,s}^r\ast H_{j,s}^1\|_{\ell^1(\ZZ^\Gamma)}
&\le 
\sum_{a/q \in R_s^\Gamma \cap [0,1)^{\Gamma}}
\sum_{b/q' \in R_s^\Gamma \cap [0,1)^{\Gamma}}
|G(a/q)|^{2r}\|X_{k,a/q}^r \ast Y_{j,b/q'}\|_{\ell^1(\ZZ^\Gamma)}+O(2^{-s} 2^{-j/4})
\\
&\lesssim 2^{-s}\sup_{a/q, b/q' \in R_s^\Gamma}\|X_{k,a/q}^r \ast Y_{j,b/q'}\|_{\ell^1(\ZZ^\Gamma)}
+
2^{-s}2^{-j/4}.
\end{align*}
Consequently, in order to prove \eqref{eq:14.1} it suffices to check that
\begin{equation}\label{eq:15.1}
\sup_{a/q, b/q' \in R_s^\Gamma}
\|X_{k,a/q}^r \ast Y_{j,b/q'}\|_{\ell^1(\ZZ^\Gamma)}
\lesssim 2^{-\delta(k-j)/2}.
\end{equation}
Fixing $a/q, b/q'\in R_s^\Gamma$,  and splitting the above convolution into classes modulo $Q:=qq' \lesssim 2^{2s}$, we obtain
\begin{align*}
\|X_{k,a/q}^r \ast Y_{j,b/q'}\|_{\ell^1(\ZZ^\Gamma)}
\le
\sum_{x\in\ZZ^\Gamma}\sum_{c\in\ZZ_Q^\Gamma}\Big|\sum_{y\in\ZZ^\Gamma} X_{k}^r(x-Qy-c) Y_j (Qy+c)\Big|\le I_1+I_2,
\end{align*}
where
\begin{align*}
I_1:=&\sum_{x\in\ZZ^\Gamma}\sum_{c\in\ZZ_Q^\Gamma}\sum_{y\in\ZZ^\Gamma} |X_{k}^r(x-Qy-c)-X_{k}^r(x)| |Y_{j}(Qy+c)|,\\
I_2:=&\sum_{x\in\ZZ^\Gamma}\sum_{c\in\ZZ_Q^\Gamma}|X_{k}^r(x)| \Big|\sum_{y\in\ZZ^\Gamma}Y_{j}(Qy+c)\Big|.
\end{align*}
 Note that thanks to the splitting into classes modulo $Q$, we managed to annihilate the exponential factors $\ex((\cdot)\cdot a/q)$ and $\ex((\cdot)\cdot b/q')$ without inserting the absolute value under the sum over $y$. That allowed us to preserve the essential cancelations in the sum $\sum_{y\in\ZZ^\Gamma}Y_{j}(Qy+c)$.

Now we will show that
\begin{align}\label{eq:16.1}
|X_k^r(x-y)-X_k^r(x)|
\lesssim 
2^{-(k-j)}\big(1+|2^{-j}\circ y|\big)^{1+D} 
\Big(\prod_{\gamma\in\Gamma} 2^{-k|\gamma|} \Big) 
\big(1+|2^{-k}\circ x|\big)^{-D},
\end{align}
uniformly in $j\le k$ and $x,y\in \RR^\Gamma$. Using \eqref{eq:16.3} and $j\le k$, we obtain
\begin{align*}
|X_k^r(x-y)-X_k^r(x)|
&\le 
\int_0^1 \sum_{\gamma\in\Gamma}|y_\gamma| |\partial_{x_\gamma} X_k^r(x+ty)|\, dt
\\
&\le 
2^{-(k-j)}\int_0^1 \sum_{\gamma\in\Gamma}2^{-j|\gamma|}|y_\gamma| 2^{k|\gamma|}|\partial_{x_\gamma} X_k^r(x+ty)|\, dt
\\
&\lesssim
2^{-(k-j)}|2^{-j}\circ y| \Big(\prod_{\gamma\in\Gamma} 2^{-k|\gamma|}\Big) \int_0^1 \big(1+|2^{-k}\circ (x+ty)|\big)^{-D}\,dt.
\end{align*}
This implies \eqref{eq:16.1}, since we have an estimate 
\begin{align*}
	\big(1+|2^{-k}\circ (x+ty)|\big)^{-1}
	\lesssim
	\big(1+|2^{-k}\circ x|\big)^{-1}
	\big(1+|2^{-k}\circ y|\big), 
	\qquad x, y \in \RR^{\Gamma}, \quad t \in [0,1].
\end{align*}

Next we show that
\begin{equation}\label{eq:16.2}
|Y_j(y)|\lesssim \Big( \prod_{\gamma\in\Gamma} 2^{-j(|\gamma|-\delta)} \Big) \int_{\R^d}|\rho(u)|\Big(1+\big|2^{\delta j}((u)^\Gamma-2^{-j}\circ y)\big|\Big)^{-4D}\, du.
\end{equation}
Taking $\widehat{\eta_{\le \delta k}}:=\mathcal F_{\RR^\Gamma}\eta_{\le \delta k}$ a direct computation involving the definition of $Y_j$ and \eqref{eq:9.1} gives
\begin{align*}
Y_j(y)
&=
\phi_j(y) \int_{\RR^d}\rho(u)\int_{\RR^\Gamma} \eta_{\le \delta j}(2^j\circ \xi) \ex\big(\xi\cdot y- (u)^\Gamma\cdot (2^j\circ\xi)\big)\,d\xi\,du
\\
&=
\Big( \prod_{\gamma\in\Gamma}2^{-j|\gamma|} \Big) \phi_j(y)\int_{\RR^d}\rho(u)\widehat{\eta_{\le \delta j}}((u)^\Gamma - 2^{-j}\circ y)\,du
\end{align*}
and consequently \eqref{eq:16.2} follows.

Next, combining \eqref{eq:16.1} with \eqref{eq:16.2} and using the fact that
\begin{align*}
1+ |2^{-j}\circ y |
\lesssim
1+ 2^{\delta j} |(u)^\Gamma-2^{-j}\circ y|,
\end{align*}
uniformly in $y \in \RR^{\Gamma}$, $|u| \le 2$ and $j \in \ZZ_+$, we obtain 
\begin{align*}
I_1
&=
\sum_{x\in\ZZ^\Gamma}\sum_{y\in\ZZ^\Gamma} |X_{k}^r(x-y)-X_{k}^r(x)| |Y_{j}(y)| \\
&\lesssim 
2^{-(k-j)} \Big( \prod_{\gamma\in\Gamma} 2^{-k|\gamma|-j(|\gamma|-\delta)} \Big) 
\sum_{x\in\ZZ^\Gamma}\sum_{y\in\ZZ^\Gamma}
\big(1+|2^{-j}\circ y| \big)^{1+D} \big(1+|2^{-k}\circ x| \big)^{-D} \\
&\quad\times 
\int_{\R^d}|\rho(u)|\big(1+ 2^{\delta j} |(u)^\Gamma-2^{-j}\circ y|\big)^{-4D}\, du \\
&\lesssim 
2^{-(k-j)} 
\Big(\prod_{\gamma\in\Gamma} 2^{-j(|\gamma|-\delta)} \Big) 
 \int_{\R^d}|\rho(u)| \sum_{y\in\ZZ^\Gamma} 
 \big(1+ 2^{\delta j} |(u)^\Gamma-2^{-j}\circ y|\big)^{-2D}\, du.
\end{align*}
Since we have
\begin{align*}
	\sum_{y\in\ZZ^\Gamma} 
\big(1+ 2^{\delta j} |A-2^{-j}\circ y|\big)^{-2D}
\lesssim
\Big(\prod_{\gamma\in\Gamma} 2^{j(|\gamma|-\delta)} \Big),
\qquad A \in \RR^{\Gamma},  \quad j \ge 0,
\end{align*}
we obtain $I_1 \lesssim 2^{-(k-j)}$ as required.

Next we focus on $I_2$. Using definition of $Y_j$, we see that 
\begin{align*}
I_2\lesssim \sum_{c\in\ZZ_Q^\Gamma}\big|\sum_{y\in \ZZ^\Gamma} Y_j(Qy+c) \big|
\le 
\sum_{c\in \ZZ_Q^\Gamma}\int_{\RR^\Gamma}|\widetilde{J_j}(\xi)| 
\Big| \sum_{y\in\ZZ^\Gamma}\phi_j(Qy+c) \ex \big((Qy+c)\cdot \xi \big) \Big|\eta_{\le \delta j}(2^j\circ \xi)\,d\xi.
\end{align*}
Applying  Lemma \ref{lem:A1}, we obtain
\begin{equation*}
\Big|\sum_{y\in\ZZ^\Gamma}\phi_j(Qy+c) \ex\big((Qy+c)\cdot \xi \big) \Big|
\lesssim 
Q^{-|\Gamma|} \Big(\prod_{\gamma\in\Gamma} 2^{j(|\gamma|+\delta)}\Big)
\big(1+ 2^{\delta j}|2^j\circ \xi |\big)^{-D},
\end{equation*}
uniformly in $|2^j \circ \xi|\lesssim 2^{j/4}, Q\lesssim 2^{j/8}$ and $c\in\ZZ^\Gamma$.
Moreover, since $\int_{\RR^d} \rho(u)\, du = 0$, we have 
$\widetilde{J_j}(0)=0$ and $|\widetilde{J_j}(\xi)|\lesssim |2^j \circ \xi|$. Consequently, we bound 
\begin{align*}
I_2
\lesssim 
\Big( \prod_{\gamma\in\Gamma} 2^{j(|\gamma|+\delta)} \Big)
\int_{\RR^\Gamma} |2^j \circ \xi| 
\big(1+ 2^{\delta j}|2^j\circ \xi |\big)^{-D} \,d\xi 
\lesssim 
2^{-\delta j}\le 2^{-\delta (k-j)/2}.
\end{align*}
This proves \eqref{eq:15.1} and finishes the proof of Lemma \ref{lem:2.3}.
\end{proof}

As observed before, Lemma \ref{lem:2.3}, together with the Cotlar--Knapp--Stein lemma, yields \eqref{eq:8.2}, which, in turn, combined with \eqref{eq:8.1} for $l\in\{2,3\}$, implies inequality \eqref{eq:7.1}. This gives Lemma \ref{lem:major arcs, small scales} as desired.

\subsection{Proof of Lemma \ref{lem:major arcs, large scales}} \label{subsec:989}
For $Q \in \ZZ_{+}$ we define
$ Q \ZZ^\Gamma :=\{Q x: x\in\ZZ^\Gamma\}$, which is the subgroup of
the multiples of $Q$ in $\ZZ^\Gamma$. Then the group
$\ZZ^\Gamma/Q \ZZ^\Gamma$ of residue classes of modulo $Q$ can be
identified with the set $\ZZ_Q^\Gamma=\{0, 1,\ldots, Q-1\}^\Gamma$.
If $\mathbb G\in\{Q \ZZ^\Gamma, \ZZ/Q \ZZ^\Gamma\}$ and
$f,g\colon \mathbb G\rightarrow \CC$, then the convolution on
$\mathbb G$ is defined in a natural way by setting
\begin{equation*}
f\ast_{\mathbb G}g(x) :=\sum_{y\in \mathbb G}f(x-y)g(y), \qquad x\in \mathbb G.
\end{equation*}

Let $Q_s:=(2^{s+1})!$ and by \ref{item3} notice that
$Q_s\le 2^{\delta k}$ for $k\ge \kappa_s$. Our goal now is to represent the kernel $K_{k,s}$ defined in \eqref{id:604} as a tensor product of two kernels on $Q_s \ZZ^\Gamma$ and $\ZZ^\Gamma_{Q_{s}}=\ZZ^\Gamma/Q_s \ZZ^\Gamma$ plus an acceptable error term.
To this end we introduce new kernels. For $k\ge D$ and $Q\ge 1$ let
\begin{align} \label{id:607}
W_{k,Q}(x)
:=
Q^{|\Gamma|}\phi_k(x)\int_{\RR^\Gamma} \eta_{\le \delta k} (2^k\circ \xi) \ex(x\cdot \xi) J_k(\xi)\,d\xi,\qquad x\in Q\ZZ^\Gamma,
\end{align}
where $J_k$ is defined in \eqref{eq:9.1}. Moreover, we let 
\begin{equation*}
V_s(b)
:=
Q_s^{-|\Gamma|}\sum_{a/q \in R_s^\Gamma \cap [0,1)^{\Gamma} } G(a/q) \ex(b\cdot a/q),\qquad b\in\ZZ^\Gamma.
\end{equation*} 
Observe that $V_s$ is $Q_s$-periodic since $q$ divides $Q_s$ for any $a/q\in R_s^\Gamma$ and $s\le \delta k$.
Moreover,  we set $\mathcal W_{k,Q_s}(f):=f\ast_{Q_s\ZZ^\Gamma} W_{k,Q_s}$  and  $\mathcal V_s(f):=f\ast_{\ZZ^\Gamma_{Q_{s}}} V_s$ as well as
\[
\mathcal W_{\ast,Q}(f):=\sup\limits_{\substack{2^{\delta k}\ge Q \\ k\ge D}}|f\ast_{Q\ZZ^\Gamma} W_{k,Q}|, \qquad Q\ge 1.
\]

To prove Lemma \ref{lem:major arcs, large scales} it suffices to show the following three lemmas.

\begin{lemma}\label{lem:3.1}
	We have
\begin{align*}
\|\sup_{k\ge \kappa_s} |f\ast K_{k,s}|\|_{\ell^2(\ZZ^\Gamma)}
\le
\|\mathcal W_{\ast,Q_s}\|_{\ell^2(Q_s\ZZ^\Gamma)\rightarrow \ell^2(Q_s\ZZ^\Gamma)} 
\|\mathcal V_s\|_{\ell^2(\ZZ^\Gamma_{Q_{s}})\rightarrow \ell^2(\ZZ^\Gamma_{Q_{s}})}\|f\|_{\ell^2(\ZZ^\Gamma)}
+O(2^{-s/8}\|f\|_{\ell^2(\ZZ^\Gamma)}),
\end{align*}
uniformly in $s\ge 0$ and $f\in \ell^2(\ZZ^\Gamma)$.
\end{lemma}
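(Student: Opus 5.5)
The plan is to show that, for $k\ge\kappa_s$, the kernel $K_{k,s}$ agrees with the ``tensor product'' kernel
\[
T_{k,s}(x):=V_s(x)\,W_{k,Q_s}(Q_s y)\Big|_{x=Q_sy+c}
\]
up to an error that is summable in $k$. Concretely, we write $x=Q_sy+c$ with $c\in\ZZ^\Gamma_{Q_s}$ and $y\in\ZZ^\Gamma$, and aim to prove
\[
K_{k,s}(Q_sy+c)=V_s(c)\,W_{k,Q_s}(Q_sy)+E_{k,s}(Q_sy+c),\qquad \|E_{k,s}\|_{\ell^1(\ZZ^\Gamma)}\lesssim 2^{-k/4}.
\]
Since $\sum_{k\ge\kappa_s}2^{-k/4}\lesssim 2^{-\kappa_s/4}\le 2^{-s/8}$, the errors can be controlled by $\sup_k|f*E_{k,s}|\le\sum_k|f*E_{k,s}|$ together with Young's inequality. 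This accounts for the $O(2^{-s/8}\|f\|)$ term.

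To get the approximation, we unfold $\Xi_{k,s}$ in \eqref{id:604}. Each summand is localized near some $a/q\in R_s^\Gamma\cap[0,1)^\Gamma$, and we apply Lemma~\ref{fact:2.2}. Its hypotheses hold because $q\le 2^{s+1}\le Q_s\le 2^{\delta k}$ and $|2^k\circ\xi|\lesssim 2^{\delta k}$. This gives
\[
K_{k,s}(x)=\phi_k(x)\sum_{a/q}G(a/q)\,\ex(x\cdot a/q)\int_{\RR^\Gamma}\ex(x\cdot\xi)J_k(\xi)\,\eta_{\le\delta k}(2^k\circ\xi)\,d\xi+\text{error}.
\]
The error is at most $2^{-k/2}$ times $\|\phi_k\|_{\ell^1}\,|\supp\Xi_{k,s}|$, which is $\lesssim 2^{-k/2}\,2^{O(\delta k)}|R_s^\Gamma\cap[0,1)^\Gamma|$. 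Because $s\le\delta k$, this is $\lesssim 2^{-k/4}$. Since $q\mid Q_s$, we have $\ex((Q_sy+c)\cdot a/q)=\ex(c\cdot a/q)$, so the main term equals
\[
Q_s^{|\Gamma|}V_s(c)\,\phi_k(Q_sy+c)\int\ldots(Q_sy+c).
\]
It remains to replace the argument $Q_sy+c$ by $Q_sy$ in the factor $\phi_k(\cdot)\int\ldots$. Note that $|c|\le Q_s\le 2^{\delta k}$. The derivative of the integral in $x_\gamma$ costs a factor $2^{-k|\gamma|}2^{\delta k}$, and the derivative of $\phi_k$ costs $2^{-k(|\gamma|+\delta)}$. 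So this replacement produces a relative error of size $2^{\delta k}\cdot 2^{\delta k}\,2^{-k}$, and after summing over $y$ and $c$ an $\ell^1$ error of size $\lesssim 2^{-k/2}$, using the Schwartz decay of the main factor as in \eqref{eq:16.2}. After the replacement we have exactly the kernel $V_s(c)\,W_{k,Q_s}(Q_sy)$. The prefactor $Q_s^{|\Gamma|}$ in \eqref{id:607} compensates the normalization $Q_s^{-|\Gamma|}$ in $V_s$.

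Finally, for the main term we write $f_c(y):=f(Q_sy+c)$. Convolution with the tensor kernel then reads
\[
(f*T_{k,s})(Q_sy+c)=\sum_{c'\in\ZZ^\Gamma_{Q_s}}V_s(c-c')\,\big(f_{c'}*_{Q_s\ZZ^\Gamma}W_{k,Q_s}\big)(y),
\]
after identifying $\ZZ^\Gamma$ with $Q_s\ZZ^\Gamma$. Since $V_s$ does not depend on $k$, the operators commute. Taking $\sup_k$ inside, the supremum of $\mathcal W$ is applied to $F_{c}:=\sum_{c'}V_s(c-c')f_{c'}$, which is a vector-valued function. Then:
\begin{enumerate}
\item the $\ell^2$ norm in $y$ of $\mathcal W_{*,Q_s}F_c$ is at most $\|\mathcal W_{*,Q_s}\|\,\|F_c\|_{\ell^2(y)}$, using that $2^{\delta k}\ge Q_s$ for all $k\ge\kappa_s$;
\item for each fixed $y$, the $\ell^2$ norm in $c$ of $F_c(y)$ is at most $\|\mathcal V_s\|\,\|f_\cdot(y)\|_{\ell^2(c)}$.
\end{enumerate}
Summing these two bounds gives the main term.

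The main obstacle is the bookkeeping of the error terms. We must check that every loss of the form $2^{O(\delta k)}$ and $|R_s^\Gamma|\lesssim 2^{(|\Gamma|+1)s}$ is beaten by $2^{-k/2}$, which is where $s\le\delta k$ and $k\ge\kappa_s$ are used.
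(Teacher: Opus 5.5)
Your overall route is the paper's: approximate $K_{k,s}$ by the tensor product $V_s\otimes W_{k,Q_s}$ using Lemma~\ref{fact:2.2} and a smoothness argument based on $Q_s\le 2^{\delta k}$, sum the errors over $k\ge\kappa_s$, and bound the main term by applying $\mathcal W_{\ast,Q_s}$ in $y$ and $\mathcal V_s$ in the residue variable. There is, however, one step that fails as written. It is the identity
\[
(f*T_{k,s})(Q_sy+c)=\sum_{c'\in\ZZ^\Gamma_{Q_s}}V_s(c-c')\,\big(f_{c'}*_{Q_s\ZZ^\Gamma}W_{k,Q_s}\big)(y).
\]
Your $T_{k,s}$ is defined through the canonical decomposition $x=Q_sy+c$ with $c\in\{0,\dots,Q_s-1\}^\Gamma$. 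In the convolution you evaluate it at $Q_s(y-y')+(c-c')$, where $c-c'\in(-Q_s,Q_s)^\Gamma$, and rewriting this point canonically produces a carry. What you actually get is
\[
T_{k,s}\big(Q_s(y-y')+(c-c')\big)=V_s(c-c')\,W_{k,Q_s}\big(Q_s(y-y'-\epsilon)\big),\qquad \epsilon=\epsilon(c,c')\in\{0,1\}^\Gamma .
\]
Since $\epsilon$ depends on $c'$, you cannot pull $\sum_{c'}$ inside to form $F_c*W_{k,Q_s}$. No choice of kernel on $\ZZ^\Gamma$ fixes this: the tensor operator commutes only with translations by $Q_s\ZZ^\Gamma$. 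For $|\Gamma|=1$, the pairs $(x,x')=(1,0)$ and $(Q_s,Q_s-1)$ have the same difference, yet their kernel values are $V_s(1)W_{k,Q_s}(0)$ and $V_s(1)W_{k,Q_s}(Q_s)$.

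The repair is what the paper does, and your estimates already cover it. Prove $K_{k,s}(Q_sy+b)=V_s(b)\,W_{k,Q_s}(Q_sy)+E_{k,s}(y,b)$ for all $|b|\lesssim Q_s$, not just $b\in\ZZ^\Gamma_{Q_s}$. Your Taylor step only used $|b|\le Q_s\le 2^{\delta k}$, so it applies verbatim. Then $f*K_{k,s}(Q_sy+c)$ splits exactly into the tensor term plus $\sum_{y',c'}E_{k,s}(y-y',c-c')\,f(Q_sy'+c')$.

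This error operator is no longer a convolution on $\ZZ^\Gamma$, so Young's inequality must be replaced by Minkowski over the $Q_s^{|\Gamma|}$ values of $c'$ (or by Schur's test). That gives the bound $Q_s^{|\Gamma|}\sup_{|b|<Q_s}\|E_{k,s}(\cdot,b)\|_{\ell^1(\ZZ^\Gamma)}\lesssim 2^{\delta k|\Gamma|}2^{-k/4}\le 2^{-k/8}$, which still sums over $k\ge\kappa_s$ to $O(2^{-s/8})$.

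Alternatively, you can keep your $T_{k,s}$ and absorb the carry. Bound $\|W_{k,Q_s}(\cdot-Q_s\epsilon)-W_{k,Q_s}\|_{\ell^1(Q_s\ZZ^\Gamma)}\lesssim 2^{-k+O(\delta k)}$ by the same smoothness estimate, and use $\sum_{b\in\ZZ^\Gamma_{Q_s}}|V_s(b)|\le|R_s^\Gamma\cap[0,1)^\Gamma|\lesssim 2^{(|\Gamma|+1)s}$ with $s\le\delta k$. With either correction, the rest of your argument, including the final bound through $\mathcal W_{\ast,Q_s}$ and $\mathcal V_s$, goes through as in the paper.
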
 

\begin{lemma}\label{lem:3.2}
	We have
\begin{equation*}
\|\mathcal V_s(f)\|_{\ell^2(\ZZ^\Gamma_{Q_{s}})}
\le 
2^{-s/D}\|f\|_{\ell^2(\ZZ^\Gamma_{Q_{s}})},
\end{equation*}
uniformly in $s\ge 0$ and $f\in \ell^2(\ZZ^\Gamma_{Q_{s}})$.
\end{lemma}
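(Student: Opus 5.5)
We need to prove Lemma 3.2: the operator $\mathcal V_s f = f\ast_{\ZZ^\Gamma_{Q_s}} V_s$ on the finite group $\ZZ^\Gamma_{Q_s}$ has norm at most $2^{-s/D}$. In keeping with the spirit of the paper, the plan is a high-order $TT^*$ argument, though on a finite abelian group one could equally read off the norm via characters. The two routes are essentially the same computation.

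First, compute the "Fourier coefficients" of $V_s$ on $\ZZ^\Gamma_{Q_s}$. For $c\in\ZZ^\Gamma_{Q_s}$ we have
\[
\sum_{b\in\ZZ^\Gamma_{Q_s}} V_s(b)\,\ex(-b\cdot c/Q_s)=\sum_{a/q\in R_s^\Gamma\cap[0,1)^\Gamma} G(a/q)\,\ind{a/q\equiv c/Q_s \bmod 1}.
\]
This holds because $q\mid Q_s$, so $\ex(b\cdot a/q)$ is a character of $\ZZ^\Gamma_{Q_s}$, and the normalization $Q_s^{-|\Gamma|}$ combines with orthogonality of characters. Distinct reduced fractions in $[0,1)^\Gamma$ correspond to distinct characters, so at most one term survives for each $c$. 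Consequently the operator $\mathcal V_s$ is diagonal in the character basis, with eigenvalues either $0$ or $G(a/q)$ for some $a/q\in R_s^\Gamma$. In $TT^*$ language: the kernel of $(\mathcal V_s^*\mathcal V_s)^r$ equals
\[
Q_s^{-|\Gamma|}\sum_{a/q} |G(a/q)|^{2r}\ex(\cdot\, a/q),
\]
by the formula of Section \ref{subsec:TT*} adapted to the finite group. This operator is again diagonal, with eigenvalues $|G(a/q)|^{2r}$.

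Next, bound the eigenvalues. By Lemma \ref{lem:A4}, $|G(a/q)|\lesssim q^{-\epsilon'}\lesssim 2^{-s\tau}$ for $a/q\in R_s^\Gamma$, since $q\ge 2^s$. Hence $\|\mathcal V_s\|_{\ell^2\to\ell^2}=\max_{a/q\in R_s^\Gamma}|G(a/q)|\le C2^{-s\tau}$. Using $\|\mathcal V_s\|=\|(\mathcal V_s^*\mathcal V_s)^r\|^{1/2r}$ gives the same bound.

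The main obstacle is converting $C2^{-s\tau}$ into the clean inequality $\le 2^{-s/D}$ with no constant, valid for all $s\ge0$. Since $D>r\delta^{-4}\ge \delta^{-8}/\tau$ by items \ref{item2} and \ref{item3}, we have $1/D<\tau/2$. Thus $C2^{-s\tau}\le 2^{-s/D}$ as soon as $2^{s\tau/2}\ge C$, that is, for all $s$ beyond a threshold depending only on $d,\Gamma$.

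For small $s$ the constant must be handled separately. Here I would use the trivial bound $|G(a/q)|\le 1$, so the norm is at most $1$. To get the strict inequality $\le 2^{-s/D}$, note that $D$ can be taken large enough that $2^{-s_0/D}$ is close to $1$ for $s\le s_0$. One must then check that $|G(a/q)|<1$ with a quantitative gap, or else accept an implicit constant; the paper only needs $\lesssim$ in Lemma \ref{lem:major arcs, large scales}. For $s=0$ the only fraction is $0/1$, where $G=1$, and the bound $1\le 2^0$ holds. If a genuine gap is needed for $1\le s\le s_0$, I would fall back to reading the statement with an implicit constant, which suffices for the application.
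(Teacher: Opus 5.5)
Your proof is correct once the lemma is read with an implicit constant, as the paper's own proof also requires. It uses the same computation as the paper: the Fourier coefficients of $V_s$ on $\ZZ^\Gamma_{Q_s}$ are $\ind{R_s^\Gamma\cap[0,1)^\Gamma}(c/Q_s)\,G(c/Q_s)$. You differ in how the operator norm is read off.

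The paper bounds $\|(\mathcal V_s^*\mathcal V_s)^r\|$ by the $\ell^1$ norm of its kernel, which is at most $\sum_{a/q\in R_s^\Gamma\cap[0,1)^\Gamma}|G(a/q)|^{2r}$. It therefore needs $r$ large (item \ref{item2}) to beat the cardinality $|R_s^\Gamma\cap[0,1)^\Gamma|\lesssim 2^{s(|\Gamma|+1)}$, and it ends with $\|\mathcal V_s\|\lesssim 2^{-s/(2r)}$. You instead use that convolution on a finite abelian group is diagonalized by characters. This gives the exact norm $\|\mathcal V_s\|=\max_{a/q\in R_s^\Gamma}|G(a/q)|\lesssim 2^{-s\tau}$, with no role for $r$ and a better exponent. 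The paper's version follows the template it wants to advertise: an $ \ell^1$ kernel bound for a high power rather than a sup of a multiplier, which is what survives in non-commutative settings. For this lemma alone, your spectral argument is simpler and sharper.

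On the constant, you should not hedge: the inequality with constant exactly $1$ is false in general, so there is no quantitative gap to look for. Take $d=1$, $\Gamma=\{1,2\}$, $q=2$, $a=(1,1)$. Then $a/q\in R_1^\Gamma\cap[0,1)^\Gamma$. Since $r+r^2$ is even for every integer $r$, every term of the complete sum defining $G(a/q)$ equals $1$, so $G(a/q)=1$ and $\|\mathcal V_1\|=1>2^{-1/D}$.

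The lemma must therefore be read as $\|\mathcal V_s f\|\lesssim 2^{-s/D}\|f\|$. That is also all the paper's proof delivers: it shows $\|V_s^r\|_{\ell^1}\lesssim 2^{-s}$, and the $2r$-th root of that still carries a constant. It is also all that Lemma \ref{lem:major arcs, large scales} uses. With that reading your proof is complete, since $C2^{-s\tau}\lesssim 2^{-s/D}$ follows from $1/D<\tau$.
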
 

\begin{lemma}\label{lem:3.3}
	We have
\begin{equation*}
\|\mathcal W_{\ast,Q}(f)\|_{\ell^2(Q\ZZ^\Gamma)}\lesssim \|f\|_{\ell^2(Q\ZZ^\Gamma)},
\end{equation*}
uniformly in $Q\ge 1$ and $f\in \ell^2(Q\ZZ^\Gamma)$.
\end{lemma}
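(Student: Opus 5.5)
The plan is to rescale $Q\ZZ^\Gamma$ to $\ZZ^\Gamma$ and turn $\mathcal W_{\ast,Q}$ into a maximal function of multipliers on $\TT^\Gamma$ that is uniform in $Q$. That maximal function is then controlled by a smooth model plus a square function. Set $x=Qy$ with $y\in\ZZ^\Gamma$, so that $\ell^2(Q\ZZ^\Gamma)\cong\ell^2(\ZZ^\Gamma)$ isometrically. Changing variables $\xi\mapsto \xi/Q$ in \eqref{id:607} gives
\[
\widetilde W_{k}(y):=W_{k,Q}(Qy)=\phi_k(Qy)\int_{\RR^\Gamma}\eta_{\le\delta k}\big(2^k\circ(\xi/Q)\big)\,\ex(y\cdot\xi)\,J_k(\xi/Q)\,d\xi .
\]
The factor $Q^{|\Gamma|}$ is exactly absorbed by the Jacobian. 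Without the cutoff $\phi_k(Qy)$, this is the restriction to $\ZZ^\Gamma$ of the inverse Fourier transform of $m_k(\xi):=\eta_{\le\delta k}(2^k\circ\xi/Q)\,J_k(\xi/Q)$. Its Fourier series is therefore the periodization $\sum_{n\in\ZZ^\Gamma}m_k(\xi+n)$.

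Since $2^{\delta k}\ge Q$, the support of $m_k$ lies in $\{|2^k\circ\xi|\lesssim Q\,2^{\delta k}\}$. This set has size at most about $2^{2\delta k-k}\ll 1/2$ in every coordinate. So for $\xi\in[-1/2,1/2)^\Gamma$ only $n=0$ contributes, and the multiplier on $\TT^\Gamma$ is just $m_k$.

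The first step is to remove the cutoff $\phi_k(Qy)$. On $\supp(1-\phi_k(Q\cdot))$ some coordinate satisfies $|Qy_\gamma|\gtrsim 2^{k(|\gamma|+\delta)}$. Integrating by parts in $\xi_\gamma$ as in Step 1 of the proof of \eqref{eq:8.1} for $l=2$, each derivative of $m_k$ in $\xi_\gamma$ costs at most $(2^{k|\gamma|}/Q)\cdot 2^{-\delta k}$ relative to the gain $|y_\gamma|^{-1}$. This should give an $\ell^1(\ZZ^\Gamma)$ bound $\lesssim 2^{-k}$ for the discarded piece, which is summable in $k$ and so harmless for the maximal function.

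The second step is to remove the cutoff $\eta_{\le\delta k}$. By Lemma \ref{lem:A2}, $|J(\zeta)|\lesssim|\zeta|^{-\tau}$, and on the region where the cutoff is not $1$ we have $|\zeta|=|2^k\circ\xi/Q|\ge 2^{\delta k}$. Hence $\|m_k-J(2^k\circ\cdot/Q)\|_{L^\infty(\TT^\Gamma)}\lesssim 2^{-\delta\tau k}$. By Plancherel this error is also summable in $k$. We are left with the multipliers $J(2^k\circ\xi/Q)$, $k\ge D$, acting on $\ell^2(\ZZ^\Gamma)$. Here $Q$ only changes the dilation parameter, so no bound below depends on $Q$.

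For the remaining maximal operator, fix a Schwartz function $\psi$ on $\RR^\Gamma$ with $\psi(0)=1$, and split
\[
J(2^k\circ\xi/Q)=\psi(2^k\circ\xi/Q)+\big[J-\psi\big](2^k\circ\xi/Q).
\]
For the first term, the convolution kernel is a positive Schwartz-type bump adapted to the anisotropic box of side lengths $2^{k|\gamma|}/Q$. Its maximal function is therefore dominated by an iterated one-dimensional discrete Hardy--Littlewood maximal function, which is bounded on $\ell^2$ uniformly in the scales; the discrete version of \eqref{eq:12} suffices. For the second term, dominate the supremum by the square function and use Plancherel:
\[
\sum_{k}\big|[J-\psi](2^k\circ\zeta)\big|^2\lesssim 1 \quad\text{uniformly in }\zeta .
\]
This holds because $|J(\zeta)-\psi(\zeta)|\lesssim\min\{|\zeta|,|\zeta|^{-\tau}\}$. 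The bound near $0$ uses $J(0)=\psi(0)=1$ and smoothness, and the bound at infinity uses Lemma \ref{lem:A2}. Both pieces are geometric in $k$ along anisotropic dilations.

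The main obstacle is the first step: checking carefully that the truncation by $\phi_k(Q\cdot)$ and the periodization interact correctly uniformly in $Q$, which is where the hypothesis $2^{\delta k}\ge Q$ enters. Everything afterwards is a standard continuous-type maximal argument transplanted to $\ZZ^\Gamma$.
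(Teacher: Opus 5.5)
Your argument is correct, but it takes a genuinely different route from the paper.

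\textbf{The paper's route.} The paper stays inside the almost-orthogonality framework throughout:
\begin{enumerate}
\item It transfers to $\RR^\Gamma$ through the step function $F_f$.
\item It decomposes the frequency cutoff into the pieces $\eta_{\le\delta(\omega+1)}-\eta_{\le\delta\omega}$.
\item It dominates the $\omega=0$ piece by the Hardy--Littlewood maximal function of the homogeneous space $(\RR^\Gamma,\rho_\Gamma)$.
\item It controls each $\omega$-piece by a square function estimated via Lemma~\ref{lem:1}. The bound \eqref{eq:2delta} is split into the regime $\omega\le(k-j)/(2\delta)$, handled by the cancellation $\widehat{S_{m,\omega}}(0)=0$ plus the smoothness bound \eqref{eq:Sdec}, and the regime $\omega>(k-j)/(2\delta)$, handled by a high-order $TT^*$ bound plus the van der Corput decay of $J$.
\end{enumerate}

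\textbf{Your route.} You stay on $\ZZ^\Gamma$ after rescaling. You strip off both $\phi_k(Q\,\cdot)$ and $\eta_{\le\delta k}$ at a cost summable in $k$. This is exactly where $Q\le 2^{\delta k}$ enters, through the periodization and the lattice spacing $Q2^{-k|\gamma|}\le 1$ in the normalized variable. You then run Stein's classical scheme. The smooth approximant $\psi$ is controlled by the strong maximal function. The remainder $J-\psi$ is handled by a square function and Plancherel on $\TT^\Gamma$, using $|J-\psi|(\zeta)\lesssim\min\{|\zeta|,|\zeta|^{-\tau}\}$ and the lacunarity $|2^{k+1}\circ\zeta|\ge 2|2^{k}\circ\zeta|$, which holds because every $|\gamma|\ge1$.

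\textbf{What each buys.} Your route is shorter and dispenses with both the $\omega$-decomposition and Cotlar--Knapp--Stein. However, its decisive step is a Plancherel computation, which is precisely what the paper sets out to replace. The paper's proof uses only $L^1$ bounds for the kernels of $\mathcal S_{j,\omega}\mathcal S_{k,\omega}^*$ and $(\mathcal S_{k,\omega}^*\mathcal S_{k,\omega})^r$. That is why it is the version that can be transplanted to non-commutative settings.

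\textbf{Two details to tidy in a write-up.}
\begin{enumerate}
\item The set $\supp(1-\phi_k(Q\,\cdot))$ is unbounded. So, unlike Step~1 of \eqref{eq:8.1}, pointwise smallness is not enough; you need genuine decay in $y$. You get it by integrating by parts in the coordinate $\gamma$ that maximizes $Q2^{-k|\gamma|}|y_\gamma|$, and then summing over $y$ as a Riemann sum.
\item After your second step, the multiplier is $J(2^k\circ\xi/Q)$ restricted to $[-1/2,1/2)^\Gamma$. Before you identify the $\psi$-kernel with the restriction to $\ZZ^\Gamma$ of the inverse Fourier transform of $\psi(2^k\circ\,\cdot\,/Q)$, you should note one fact. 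The restricted and periodized $\psi$-multipliers differ by $O_M(2^{-k(1-\delta)M})$ in sup norm, which is summable in $k$.
\end{enumerate}
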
 
Notice that Lemmas \ref{lem:3.1}, \ref{lem:3.2} and \ref{lem:3.3} imply Lemma \ref{lem:major arcs, large scales}. Now we focus on proving the above lemmas.

\begin{proof}[Proof of Lemma \ref{lem:3.1}]
First we want to show that 
\begin{align}\label{eq:20.1}
K_{k,s}(Q_s y+b)=W_{k, Q_s}(Q_s y)V_s(b)+E_{k,s}(y,b),
\end{align}
uniformly in $k\ge \kappa_s$, $s \ge 0$,  $y\in\ZZ^\Gamma$, $|b|\lesssim Q_s\le 2^{\delta k}$, with $E_{k,s}$ satisfying 
\begin{align} \label{id:605}
|E_{k,s}(y,b)|
\lesssim 
2^{-k/4} \Big( \prod_{\gamma\in\Gamma} 2^{-k|\gamma|}\Big)\eta_{\le2\delta k}(2^{-k}\circ y).
\end{align}
To this end recall, see \eqref{id:604},  that
\begin{align*}
K_{k,s}(x)
=
\phi_k(x)\sum_{a/q\in R_s^\Gamma \cap [0,1)^{\Gamma} } \ex(x\cdot a/q) 
\int_{\RR^\Gamma} \ex(x\cdot\xi) S_k(\xi+a/q)\eta_{\le \delta k}(2^k \circ \xi)\,d\xi.
\end{align*}
Since $Q_s\le 2^{\delta k}$ and $s \le \delta k$ for $k\ge \kappa_s$, using Lemma~\ref{fact:2.2} we see that
\begin{align*}
\phi_k(Q_s y+b)
& = 
\phi_k(Q_s y)
+ O \big( 2^{-3k/4} \eta_{\le2\delta k}(2^{-k}\circ y) \big),  \\
S_k(\xi+a/q)
& =
G(a/q)J_k(\xi) + O(2^{-k/2}), \\
\ex\big((Q_s y+b)\cdot\xi\big)
& =
\ex\big((Q_s y)\cdot\xi\big) + O( 2^{-3k/4} ),
\end{align*}
uniformly in $k\ge \kappa_s$, $s \ge 0$, $a/q\in R_s^\Gamma$, $|2^k \circ \xi| \lesssim 2^{\delta k}$ and $|b| \lesssim Q_s$. These identities lead easily to \eqref{eq:20.1}.
Therefore for $x \in \ZZ^\Gamma$ and $a \in \ZZ^\Gamma_{Q_{s}}$, we have  
\begin{align*}
f\ast K_{k,s}(Q_s x+a)
&=\sum_{y\in \ZZ^\Gamma}\sum_{b\in \ZZ^\Gamma_{Q_{s}}}K_{k,s}\big(Q_s(x-y)+a-b\big) f(Q_s y+b)
\\
&=
\sum_{y\in \ZZ^\Gamma}\sum_{b\in \ZZ^\Gamma_{Q_{s}}}W_{k,Q_s}\big(Q_s(x-y)\big)V_s(a-b) f(Q_s y+b)
\\
&\quad+
\sum_{y\in \ZZ^\Gamma}\sum_{b\in \ZZ^\Gamma_{Q_{s}}}E_{k,s}(x-y, a-b) f(Q_s y+b).
\end{align*}
Letting 
$$
F_s(a, Q_s y) 
:= 
\sum_{b\in \ZZ^\Gamma_{Q_{s}}} V_s(a-b) f(Q_s y+b)=g_y\ast_{\ZZ^\Gamma_{Q_{s}}}V_s(a),
$$
where $g_y(b):=f(Q_s y+b)$ for $b \in \ZZ^\Gamma_{Q_{s}}$ and then $g_y$ is extended $Q_{s}$ periodically to $\ZZ^\Gamma$. We can write
$$
\sum_{y\in \ZZ^\Gamma}\sum_{b\in \ZZ^\Gamma_{Q_{s}}}W_{k,Q_s}\big(Q_s(x-y)\big)V_s(a-b) f(Q_s y+b)=F_s(a, \cdot)\ast_{Q_s \ZZ^\Gamma} W_{k, Q_s}(Q_s x),
$$
and consequently
\begin{align} \label{id:606}
	\begin{split}
\sup_{k\ge \kappa_s} 
|f\ast K_{k,s}(Q_{s} x + a)|
& \le 
\sup\limits_{\substack{2^{\delta k}\ge Q_s \\ k\ge D}}
|F_s(a, \cdot)\ast_{Q_s \ZZ^\Gamma} W_{k, Q_s}(Q_s x)| \\
& \quad +
\sum_{k\ge \kappa_s} \sum_{y\in \ZZ^\Gamma}\sum_{b\in \ZZ^\Gamma_{Q_{s}} }
|E_{k,s}(x-y, a-b)| |f(Q_s y+b)|.
\end{split}
\end{align}
Notice that \eqref{id:605} implies $\sup_{|b| \lesssim Q_{s}} \| E_{k,s}(\cdot, b) \|_{\ell^{1} (\ZZ^\Gamma)} \lesssim 2^{-k/4}$ and using this we see that the $\ell^{2} (\ZZ^\Gamma)$ norm of the the second term in \eqref{id:606} is controlled by
 \begin{align*}
 \sum_{k\ge \kappa_s} Q_{s}^{|\Gamma|}  2^{-k/4} \|f\|_{\ell^2(\ZZ^\Gamma)}
 \lesssim
  2^{-\kappa_s/8} \|f\|_{\ell^2(\ZZ^\Gamma)}
  \le
 2^{-s/8} \|f\|_{\ell^2(\ZZ^\Gamma)}.
 \end{align*}
On the other hand, the $\ell^{2} (\ZZ^\Gamma)$ norm of the first term in \eqref{id:606} is bounded by
\begin{align*}
&	\Big\|
	\big\|
	\sup\limits_{\substack{2^{\delta k}\ge Q_s \\ k\ge D}}
	|F_s(a, \cdot)\ast_{Q_s \ZZ^\Gamma} W_{k, Q_s}(Q_s x)|
	\big\|_{\ell^2(x)}
	\Big\|_{\ell^2(a)} \\
& \qquad \le
\big\| \mathcal W_{*,Q_s} \big\|_{\ell^2(Q_s\ZZ^\Gamma)\rightarrow \ell^2(Q_s\ZZ^\Gamma)}
\Big\|
\big\|
F_s(a, Q_{s} x)
\big\|_{\ell^2(x)}
\Big\|_{\ell^2(a)}.
\end{align*}
Further, notice that 
\begin{align*}
	\Big\|
	\big\|
	F_s(a, Q_{s} x)
	\big\|_{\ell^2(x)}
	\Big\|_{\ell^2(a)}
&	=
	\Big\|
	\big\|
	g_x\ast_{\ZZ^\Gamma_{Q_{s}}} V_s(a)
	\big\|_{\ell^2(a)}
	\Big\|_{\ell^2(x)} \\
&	\le
	\|\mathcal V_s\|_{\ell^2(\ZZ^\Gamma_{Q_{s}})\rightarrow \ell^2( \ZZ^\Gamma_{Q_{s}} )}
\big\| \| g_x \|_{\ell^2(a)} \big\|_{\ell^2(x)}.
\end{align*}
Since we have $	\big\| \| g_x \|_{\ell^2(a)} \big\|_{\ell^2(x)} = \|f\|_{\ell^2(\ZZ^\Gamma)}$, 
Lemma~\ref{lem:3.1} follows.
\end{proof}

We pass to the proof of Lemma \ref{lem:3.2}.

\begin{proof}[Proof of Lemma \ref{lem:3.2}]
Here we will use a high order $TT^*$ argument. 
Since 
$\mathcal{V}_s(f) =f\ast_{\ZZ^\Gamma_{Q_{s}} }V_s$, we have
$$
\big(\mathcal{V}_s^* \mathcal{V}_s\big)^r f =: f\ast_{ \ZZ^\Gamma_{Q_{s}} }V_s^r,
$$
where as in Section \ref{subsec:TT*} we have
\begin{equation*}
V_s^r(y)
=
\sum_{h_1, g_1, \dots, h_r, g_r\in \ZZ^\Gamma_{Q_{s}} }\Big(\prod_{j=1}^r \overline{V_s(h_j)} V_s(g_j)\Big)
\ind{\{0\}}\Big(y-\sum_{j=1}^r (g_j - h_j)\Big), \qquad y \in \ZZ^\Gamma_{Q_{s}}.
\end{equation*}
To prove Lemma \ref{lem:3.2} it suffices to show that
\begin{equation}\label{eq:22.1}
\|V_s^r\|_{\ell^1(\ZZ^\Gamma_{Q_{s}})}\lesssim 2^{-s}, \qquad s\ge 0.
\end{equation}
Since 
$$
\ind{\{0\}}(z)
=
Q_s^{-|\Gamma|}\sum_{b \in \ZZ^\Gamma_{Q_{s}}} \ex \big(z\cdot b/Q_s\big),
\qquad z \in \ZZ^\Gamma_{Q_{s}},
$$
we see that
\begin{align*}
V_s^r(y)
&=
Q_s^{-|\Gamma|}\sum_{b\in \ZZ^\Gamma_{Q_{s}}} \ex(y\cdot b/Q_s)\sum_{h_1, g_1, \dots, h_r, g_r\in \ZZ^\Gamma_{Q_{s}}}
\Big(\prod_{j=1}^r \overline{V_s(h_j)} \ex(h_j\cdot b/Q_s) V_s(g_j)
\ex(-g_j\cdot b/Q_s)\Big)
\\
&=
Q_s^{-|\Gamma|}\sum_{b\in \ZZ^\Gamma_{Q_{s}}} \ex(y\cdot b/Q_s)
\Big|\sum_{g\in \ZZ^\Gamma_{Q_{s}} } V_s(g) \ex(-g\cdot b/Q_s)\Big|^{2r}.
\end{align*}
To prove \eqref{eq:22.1} it suffices to show that
\begin{equation}\label{eq:23.1}
\sum_{b \in \ZZ^\Gamma_{Q_{s}}}
\Big|\sum_{g\in \ZZ^\Gamma_{Q_{s}} } V_s(g) \ex(-g\cdot b/Q_s)\Big|^{2r}
\lesssim 2^{-s},\qquad s\ge 0.
\end{equation}
This, however, follows easily since we have 
\begin{equation*}
\sum_{g\in \ZZ^\Gamma_{Q_{s}} } V_s(g) \ex(-g\cdot b/Q_s)
=
\ind{R_s^\Gamma \cap [0,1)^\Gamma }(b/Q_s)G(b/Q_s)
\end{equation*}
and consequently the left-hand side of \eqref{eq:23.1} is controlled by
$$
\sum_{a/q\in R_s^\Gamma \cap [0,1)^\Gamma} |G(a/q)|^{2r}\lesssim 2^{-s},
$$
by our choice of $r$, see \ref{item2}. This proves Lemma \ref{lem:3.2} as desired.
\end{proof}

It remains to prove Lemma \ref{lem:3.3}.

\begin{proof}[Proof of Lemma \ref{lem:3.3}]
For $0\le \omega\le k$, we first define
\begin{align*}
 W_{k, \omega, Q}(x):=Q^{|\Gamma|}\phi_k(x)\int_{\RR^\Gamma} \eta_{\le\delta\omega}(2^k\circ \xi) \ex(x\cdot\xi)J_k(\xi)\,d\xi,\qquad x\in Q\ZZ^\Gamma,
\end{align*}
and the continuous counterpart
\begin{align} \label{id:611}
\widetilde{W}_{k, \omega}(x)
:=
\phi_k(x)\int_{\RR^\Gamma} \eta_{\le\delta\omega}(2^k\circ \xi) \ex(x\cdot\xi)J_k(\xi)\,d\xi,\qquad x\in \RR^\Gamma.
\end{align}
By \eqref{id:607}, note that 
$W_{k, Q}=W_{k,k, Q}$,
and thus we have
\begin{equation*}
 W_{k, Q}=W_{k,0, Q}+\sum_{\omega=0}^{k-1}\big(W_{k, \omega+1, Q}-W_{k, \omega, Q}\big).
\end{equation*}
Therefore it suffices to prove that 
\begin{equation}\label{eq:(*)}
\big\|\sup\limits_{\substack{k\ge D\\2^{\delta k}\ge Q}}
|f\ast_{Q\ZZ^\Gamma} W_{k, 0, Q}| \big\|_{\ell^2(Q\ZZ^\Gamma)}
\lesssim 
\|f\|_{\ell^2(Q\ZZ^\Gamma)}, \qquad Q \ge 1,
\end{equation}
and 
\begin{equation}\label{eq:(**)}
\Big\|\sup\limits_{\substack{k\ge D\\2^{\delta k}\ge Q}} \Big|f\ast_{Q\ZZ^\Gamma} \sum_{\omega=0}^{k-1}\big(W_{k, \omega+1, Q}-W_{k, \omega, Q}\big) \Big| \Big\|_{\ell^2(Q\ZZ^\Gamma)}
\lesssim 
\|f\|_{\ell^2(Q\ZZ^\Gamma)}, \qquad Q \ge 1.
\end{equation}
Dominating inequality \eqref{eq:(**)} by a square function and by
using Khintchine's inequality, inequality \eqref{eq:(**)} will follow from the
following estimate
\begin{equation}\label{eq:(***)}
\Big\|\sum\limits_{\substack{k\ge D\\2^{\delta k}\ge Q\\k>\omega}}\chi_k f\ast_{Q\ZZ^\Gamma} \big(W_{k, \omega+1, Q}-W_{k, \omega, Q}\big) \Big\|_{\ell^2(Q\ZZ^\Gamma)}
\lesssim 
2^{-\omega/D}\|f\|_{\ell^2(Q\ZZ^\Gamma)},
\end{equation}
uniformly in $\omega\in\NN$, $Q\ge 1$ and $|\chi_k|\le 1$ for all
$k\in\ZZ_+$.

We begin with reducing the proof of \eqref{eq:(*)} and \eqref{eq:(***)} to showing analogous results in the continuous setup. 
Let us define 
\begin{align*}
\Upsilon_\omega(x) 
& :=
\eta_{\le \delta (\omega+1)}(x)-\eta_{\le \delta \omega}(x), 
\qquad x \in \RR^{\Gamma}, \quad \omega \in \NN, \\
S_{k, \omega}(x)
& :=
\Big( \prod_{\gamma\in\Gamma}2^{-k|\gamma|} \Big)
\frac{1}{|\Omega|}\int_{\Omega}\widehat{\Upsilon_\omega}\big((u)^\Gamma-2^{-k}\circ x \big)\,du, 
\qquad x \in \RR^{\Gamma}, \quad 0 \le \omega \le k, \\
\rho_{k, \omega}(x)
& :=
\Big( \prod_{\gamma\in\Gamma} 2^{-k|\gamma|+\delta\omega} \Big) 
\int_\Omega \big(1+2^{\delta \omega} |(u)^\Gamma-2^{-k}\circ x| \big)^{-D}\,du, 
\qquad x \in \RR^{\Gamma}, \quad 0 \le \omega \le k,
\end{align*}
where $\widehat{\Upsilon_\omega}:=\mathcal F_{\RR^{\Gamma}}\Upsilon_\omega$.
Observe that we have 
\begin{align} \label{eq:bb}
	\begin{split}
	\widetilde{W}_{k, \omega+1}(x)-\widetilde{W}_{k, \omega}(x)
	& =
	\phi_k(x)S_{k, \omega}(x), \qquad x \in \RR^{\Gamma}, \\ 
		\widehat{S_{k,\omega}}(\theta):=\mathcal F_{\RR^{\Gamma}}S_{k,\omega}(\theta)
	& =
	\Upsilon_\omega(2^k\circ \theta)J_k(\theta), \qquad \theta \in \RR^{\Gamma},
\end{split}
\end{align}
where $J_k$ defined in \eqref{eq:9.1}. Further, we have the estimate
\begin{align}\label{eq:Sdec}
|S_{k, \omega}(x)|+\sum_{\gamma\in\Gamma}2^{k|\gamma|-\delta\omega} |\partial_{x_\gamma} S_{k, \omega}(x)|
\lesssim 
\rho_{k, \omega}(x), \qquad  x \in \RR^{\Gamma}, \quad 0 \le \omega \le k,
\end{align}
and consequently
\begin{align} \label{id:608}
\|\rho_{k, \omega}\|_{L^1(\R^\Gamma)}
\simeq 
\| \rho_{k, \omega} \|_{\ell^1(\Z^\Gamma)}\simeq 1, 
\qquad
\| (1 + |2^{-k} \circ (\cdot)|)\rho_{k, \omega}\|_{L^1(\R^\Gamma)}
\lesssim
1.
\end{align}

We begin with showing that \eqref{eq:(*)} and \eqref{eq:(***)} follow from
\begin{align}
	\label{eq:delta0}
	\big\| \sup_{k\ge 0} |F\ast_{\R^\Gamma} \widetilde{W}_{k, 0}| \big\|_{L^2(\R^\Gamma)}
	&	\lesssim 
	\|F\|_{L^2(\R^\Gamma)}, \qquad F \in L^2(\RR^\Gamma), \\
	\label{eq:delta}
\Big\|\sum\limits_{k>\omega}\chi_k F\ast_{\RR^\Gamma} S_{k, \omega} \Big\|_{L^2(\RR^\Gamma)}
&	\lesssim 
	2^{-\omega/D}\|F\|_{L^2(\RR^\Gamma)}, \qquad F \in L^2(\RR^\Gamma),
	\end{align}
uniformly in $\omega\in\NN$ and $|\chi_k|\le 1$ for all $k\in\NN$.

For a given $f \in \ell^2(Q\ZZ^\Gamma)$ we 
consider
 $F\colon \RR^\Gamma\rightarrow \CC$ given by 
 \begin{equation*}
 	F(x) = F_{f}(x)
 	:=
 	\sum_{n\in Q\ZZ^\Gamma}\mathbbm{1}_{n+[0,Q)^\Gamma}(x) f(n).
 \end{equation*}
 Notice that $\|F\|_{L^2(\R^\Gamma)} = Q^{|\Gamma|/2} \|f\|_{\ell^2(Q\ZZ^\Gamma)}$.
Further, taking into account \eqref{id:611}, we obtain
\begin{align*}
\big| \widetilde{W}_{k, \omega}(x + z) - \widetilde{W}_{k, \omega}(x) \big|
\lesssim
E_{k} (x),
\end{align*}
uniformly in $x \in \RR^{\Gamma}$, $|z| \lesssim Q \le 2^{\delta k}$ and 
$0 \le \omega \le k$, where
\begin{align*}
	E_{k} (x)
	: =
	2^{-k/2} \Big( \prod_{\gamma\in\Gamma} 2^{-k|\gamma|} \Big)
	\eta_{\le 2\delta k}(2^{-k} \circ x),
	\qquad x \in \RR^{\Gamma}, \quad k \ge 0. 
\end{align*}
This shows that
\begin{align*}
F\ast_{\R^\Gamma}  \widetilde{W}_{k, \omega}(x + z) 
& =
\sum_{y \in Q\ZZ^{\Gamma} } \int_{[0,Q)^{\Gamma}} F(y + u) 
\widetilde{W}_{k, \omega}(x - y + z - u) \, du \\
& =
f \ast_{Q\ZZ^\Gamma}  W_{k, \omega,Q} (x)
+ O(Q^{|\Gamma|} |f| \ast_{Q\ZZ^\Gamma} E_{k} (x) ),
\end{align*}
uniformly in $x \in Q\ZZ^{\Gamma}$, $|z| \lesssim Q \le 2^{\delta k}$ and 
$0 \le \omega \le k$.
Squaring this identity and averaging in $z \in [0,Q)^{\Gamma}$ and then using the fact that
$\sum_{k \ge \omega} 2^{k/4} \|E_{k} \|_{\ell^1(Q\ZZ^\Gamma)} \lesssim 2^{-\omega/8}$ 
and \eqref{eq:bb},
we see that
\begin{align*}
\big\|\sup\limits_{\substack{k\ge D\\2^{\delta k}\ge Q}}
|f\ast_{Q\ZZ^\Gamma} W_{k, 0, Q}| \big\|_{\ell^2(Q\ZZ^\Gamma)}^2
& \lesssim 
Q^{-|\Gamma|} \big\| \sup_{k\ge 0} |F\ast_{\R^\Gamma} \widetilde{W}_{k, 0}| \big\|_{L^2(\R^\Gamma)}^2
+
\|f\|_{\ell^2(Q\ZZ^\Gamma)}^2, \\
\Big\|\sum\limits_{\substack{k\ge D\\2^{\delta k}\ge Q\\k>\omega}}
\chi_k f\ast_{Q\ZZ^\Gamma} \big(W_{k, \omega+1, Q}-W_{k, \omega, Q}\big) \Big\|_{\ell^2(Q\ZZ^\Gamma)}^2
& \lesssim 
Q^{-|\Gamma|}
\Big\| \sum\limits_{\substack{k\ge D\\2^{\delta k}\ge Q\\k>\omega}} 
\chi_k F\ast_{\RR^\Gamma} (\phi_k S_{k, \omega}) \Big\|_{L^2(\RR^\Gamma)}^2 \\
& \quad 
+
2^{-\omega/4}\|f\|_{\ell^2(Q\ZZ^\Gamma)}^2.
\end{align*}
Since $\| (1 - \phi_k) S_{k, \omega} \|_{L^1(\RR^\Gamma)} \lesssim 2^{-k}$, we see that \eqref{eq:(*)} and \eqref{eq:(***)} follow from \eqref{eq:delta0} and \eqref{eq:delta} respectively.

We begin with treating \eqref{eq:delta0}. It is straightforward to notice that 
\begin{equation*}
|\widetilde{W}_{k, 0}(x)|
\lesssim 
\Big( \prod_{\gamma\in\Gamma} 2^{-k|\gamma|} \Big) 
\big(1+ |2^{-k}\circ x| \big)^{-D},
\qquad x \in \RR^{\Gamma}, \quad k \ge 0.
\end{equation*}
Let $\rho_{\Gamma}$ be a quasi-metric on $\RR^{\Gamma}$ given by $\rho_{\Gamma} (x,y) := \sup_{\gamma \in \Gamma} |x_{\gamma} - y_{\gamma}|^{1/|\gamma|}$. Then
$(\RR^{\Gamma}, \rho_{\Gamma}, |\cdot|)$, where $|\cdot|$ is the Lebesgue measure on $\RR^{\Gamma}$, is a space of homogeneous type. Let us denote the corresponding 
Hardy--Littlewood maximal function by $M_{\rho_{\Gamma}}$. Then using the fact that 
\begin{align*}
	1 + |x-y| 
	\lesssim
	(1 + \rho_{\Gamma} (x,y))^{d_{\Gamma}}, 
	\qquad
	1 + \rho_{\Gamma} (x,y)
	\lesssim
	1 + |x-y|, 
	\qquad x,y \in \RR^{\Gamma},
\end{align*}
we can easily deduce that 
$|F\ast_{\R^\Gamma} \widetilde{W}_{k, 0}| \lesssim M_{\rho_{\Gamma}} F$.
Thus \eqref{eq:delta0} follows from boundedness of $M_{\rho_{\Gamma}}$ on $L^2(\RR^\Gamma)$.

We turn to proving \eqref{eq:delta}.
The Cotlar--Knapp--Stein lemma reduces \eqref{eq:delta} to proving
\begin{equation}\label{eq:2delta}
\|\mathcal{S}_{j, \omega}\mathcal{S}_{k, \omega}^*\|_{L^2(\RR^\Gamma)\rightarrow L^2(\RR^\Gamma)}
\lesssim 
2^{-2\omega/D}2^{-|k-j|/D},
\end{equation}
uniformly in $0\le \omega<j\le k$, where $\mathcal{S}_{m, \omega}f:= f \ast S_{m, \omega}$. We now distinguish two cases.

\paragraph{\textbf{Case 1}} Assume first that $\omega\le (k-j)/(2\delta)$. Since the term $k-j$ dominates $\omega$, it suffices to show that
\begin{equation*}
\|S_{k, \omega}^* \ast S_{j, \omega}\|_{L^1(\R^\Gamma)}\lesssim 2^{-(k-j)/2}.
\end{equation*}

Noting that  $\int_{\RR^\Gamma} S_{k, \omega}(x)\, dx=\widehat{S_{k, \omega}}(0)=0$,  and using \eqref{eq:Sdec}, we obtain
\begin{align*}
	|S_{k, \omega}^* \ast S_{j, \omega} (x)|
	& \le 
	\int_{\RR^\Gamma} |S_{j, \omega}(y)| 
	\big|S_{k, \omega}(y-x)-S_{k, \omega}(-x)\big|\,dy
	\\
	&\lesssim
	\int_{\RR^\Gamma} \rho_{j, \omega}(y) \int_{0}^1 
	\big| \partial_{t} \big( S_{k, \omega}(ty-x) \big) \big| \,dt \,dy
	\\
	&\lesssim
	2^{-(k-j) + \delta \omega}\int_{\RR^\Gamma}\rho_{j, \omega}(y) |2^{-j}\circ y|
	\int_{0}^1 \rho_{k, \omega}(ty-x)\,dt\,dy.
\end{align*}
Now using Fubini--Tonelli's theorem and \eqref{id:608} twice (first to the $x$ variable and then to the $y$ variable) we obtain 
$\|S_{k, \omega}^* \ast S_{j, \omega}\|_{L^1(\R^\Gamma)} \lesssim 2^{-(k-j) + \delta \omega} \le 2^{-(k-j)/2}$ and Case 1 follows.

\paragraph{\textbf{Case 2}} It remains to consider $\omega> (k-j)/(2\delta)$. In this case, we are seeking a  decay in $\omega$. The application of a high order $TT^*$ argument reduces the problem to showing 
\begin{equation} \label{id:610}
\big\| \big(\mathcal{S}_{k, \omega}^*\mathcal{S}_{k, \omega}\big)^r \big\|_{L^2(\RR^\Gamma)\rightarrow L^2(\RR^\Gamma)}
\lesssim 
2^{-\omega/(2\delta)}. 
\end{equation}
Using the inverse Fourier transform theorem and \eqref{eq:bb} we see that the kernel of the above operator is 
\begin{align*} 
K_{k, \omega}^{r}(x)
=
\int_{\RR^\Gamma} \ex(\theta\cdot x) 
\big| \widehat{S_{k, \omega}}(\theta)\big|^{2r}\,d\theta
=
\int_{\RR^\Gamma} \ex(\theta\cdot x) 
\big| \Upsilon_\omega(2^k\circ \theta)J_k(\theta) \big|^{2r}\,d\theta.
\end{align*}
Changing the variable $2^k \circ \theta \mapsto \theta$, we may further write
\begin{align*} 
	K_{k, \omega}^{r}(x)
=
\Big(\prod_{\gamma\in\Gamma}2^{-k|\gamma|} \Big)
	\int_{\RR^\Gamma} \ex\big( \theta\cdot (2^{-k} \circ x) \big) 
	\big| \Upsilon_\omega(\theta)J(\theta) \big|^{2r}\,d\theta.
\end{align*}
Using  Lemma~\ref{lem:A2}, we see that for any fixed $\alpha \in \NN^{\Gamma}$, we have 
\begin{align*} 
	\big|D^\alpha \big( \Upsilon_\omega(\theta)J(\theta) \big)\big|
	\lesssim_\alpha 
	(1+|\theta|)^{-1/d_{\Gamma}} \ind{|\theta| \ge 2^{\delta \omega}}, 
	\qquad \theta \in \RR^{\Gamma}, \quad \omega \in \NN.
\end{align*}
Consequently, integrating by parts, we obtain $\| K_{k, \omega} \|_{L^1(\RR^\Gamma)} \lesssim 2^{-\omega/(2\delta)}$, since
\begin{align*} 
|K_{k, \omega}^{r}(x)|
&	\lesssim
	\Big(\prod_{\gamma\in\Gamma}2^{-k|\gamma|} \Big)
	\big(1+|2^{-k}\circ x| \big)^{-D}
	\int_{\RR^\Gamma} (1+|\theta|)^{-1/\delta^{2}}  
	\ind{|\theta| \ge 2^{\delta \omega}} \,d\theta \\
&	\lesssim
	2^{-\omega/(2\delta)}
		\Big(\prod_{\gamma\in\Gamma}2^{-k|\gamma|} \Big)
	\big(1+|2^{-k}\circ x| \big)^{-D},
\end{align*}
uniformly in $x\in\R^\Gamma$. Hence \eqref{id:610} follows completing the proof of Lemma~\ref{lem:3.3}.
\end{proof}

\end{document}